\DeclareSymbolFont{AMSb}{U}{msb}{m}{n} 			
\renewcommand{\in}{\smallin} \renewcommand{\notin}{\notsmallin} \renewcommand{\setminus}{\smallsetminus} 
\newcommand{\om}{\omega}
\DeclareMathOperator{\cof}{{cof}}
\DeclareMathOperator{\cov}{{cov}}
\DeclareMathOperator{\non}{{non}}
\newcommand{\sub}{\subseteq}
\newcommand{\fin}{\textup{Fin}}
\newcommand{\lxp}[2]{{\vphantom{#2}}^{#1}{#2}}
\newcommand{\Pw}[1]{\mathcal{P}(#1)}
\newcommand{\pom}{\Pw{\om}}
\newcommand{\pofin}{\pom / \fin}
\newcommand{\fc}{\lxp{\om\!\!}{\om}}
\newtheorem{thm}{Theorem}
\newtheorem{lem}[thm]{Lemma}
\newtheorem*{thm*}{Theorem}
\newtheorem*{lem*}{Lemma}
\newtheorem{cor}[thm]{Corollary}
\newtheorem{prop}[thm]{Proposition}
\newtheorem{fact}[thm]{Fact}
\theoremstyle{definition}
\newtheorem{defi}[thm]{Definition}
\newtheorem{prob}[thm]{Problem}
\newtheorem{question}[thm]{Question}
\theoremstyle{remark}
\newtheorem{remark}[thm]{Remark}
\newtheorem*{remark*}{Remark}
\newtheorem{example}[thm]{Example}
\newtheorem*{claim}{Claim}
\begin{document}

\title{Hausdorff Gaps and Towers in $\pofin$}

\subjclass[2010]{03E35, 03E05} 
\keywords{Hausdorff gaps, special gaps, towers, oscillations, Suslin trees, Tukey order}
\thanks{The first author was partially supported by the grant N N201 418939 from the Polish Ministry
of Science and Higher Education.
The second author was partially supported by the grant IAA100190902 of GA AV \v{C}R and  RVO: 67985840}
\date{\today} 

\author[Piotr Borodulin-Nadzieja]{Piotr Borodulin-Nadzieja}
\address[Piotr Borodulin-Nadzieja]{Instytut Matematyczny \\ Uniwersytet Wroc\l awski \\ pl.\,Grunwaldzki 2/4 \\ 50-384 Wroc\l aw \\ Poland}
\email{pborod@math.uni.wroc.pl}

\author[David Chodounsk\'{y}]{David Chodounsk\'{y}}
\address[David Chodounsk\'{y}]{Institute of Mathematics AS CR \\ \v{Z}itn\'{a} 25 \\ 115 67 Praha 1 \\ Czech Republic  \&
Department of Mathematics \\ York University \\ 4700 Keele Street \\ 4700 Keele Street \\ Toronto \\ Ontario M3J 1P3 \\ Canada}
\email{david.chodounsky@matfyz.cz}
\thanks{Corresponding author: David Chodounsk\'{y}}

\begin{abstract} 
We define and study two classes of uncountable $\sub^*$-chains: Hausdorff towers and Suslin towers. We discuss their existence in various models of set theory. Some of the results and methods are used to provide examples of indestructible gaps not equivalent to a Hausdorff gap. We also indicate possible ways of developing a structure theory for towers based on classification of their Tukey types.
\end{abstract}

\maketitle
\section{Introduction}

We say that subsets $A$, $B$ of $\omega$ are in the relation of almost inclusion (denoted by $A\sub^* B$) if $A\setminus B$ is finite.
One of the motivations of this article is the following question:
\begin{question} \label{motivation}
Is there an uncountable well-ordered $\sub^*$-chain which consists of pairwise $\sub$-incomparable elements?
\end{question}
In a sense this is the question how ``far'' is $\sub^*$ from $\sub$.

The answer to Question \ref{motivation} is positive. We will call well-ordered increasing $\sub^*$-chains towers. (We do not assume that towers are maximal with respect to end-extension as is often done in the literature, but we treat here only uncountable towers.) 
There are both towers witnessing the positive answer to Question \ref{motivation} (we will call them \emph{special}) and towers which do not have an uncountable subtower consisting of $\sub$-incomparable sets (called \emph{Suslin}). Examples of both sorts are implicitly mentioned in
\cite{oscillations}. 

A tower $(T_\alpha)_{\alpha<\om_1}$ satisfies condition (H) if the set $\{\xi<\alpha\colon T_\xi \setminus T_\alpha \sub n\}$ is finite for each $\alpha<\omega_1$ and $n<\om$. Although it seems that this notion has not appeared
explicitly in the literature, the reader can recognize a resemblance between condition (H) and the well-known Hausdorff condition for gaps (see Section \ref{gaps}). This is not a coincidence: every ``left half'' of a Hausdorff gap is a Hausdorff tower,
i.e.\ a tower containing a cofinal subtower which satisfies condition (H). It turns out, that Hausdorff towers are the natural examples of special towers. Moreover, by an easy modification of arguments used for analyzing gaps, one can show that under
$\mathsf{MA}_{\om_1}$ all towers of length $\om_1$ are Hausdorff. So despite the fact that the object as in Question \ref{motivation} could seem unusual at first glance, it is quite common. In Section \ref{special-towers} we discuss models in which
all $\om_1$ towers are special. Moreover, we show that for every $\kappa$-tower (where $\kappa>\omega$ is regular) there is a ccc forcing making it special in the extension. In fact, under $\mathsf{MA}_{\kappa}$ each $\kappa$-tower is very close to
be a $\subseteq$-antichain of size $\kappa$ (Theorem \ref{MA-similar}).
An analysis of the analogous Luzin condition for almost disjoint systems in $\Pw{\omega}$ was done by Guzm\'{a}n and Hru\v{s}\'{a}k. 
Not surprisingly, many results about Hausdorff towers and Luzin gaps are in a direct correspondence, see~\cite{Michael-Osvaldo}. 

To the best of our knowledge, the first example of a tower which does not contain an uncountable $\sub$-antichain was given in \cite{KunenDouwen} under the assumption of $\mathsf{CH}$. More examples are provided by results from \cite{oscillations}. 
Todor\v{c}evi{\'c} proved there a theorem (see Theorem \ref{osci} in Section \ref{suslin-towers}) which implies that every tower of uncountable cofinality generating a non-meager ideal is Suslin. I.e., every tower rich enough (e.g.\ generating a
maximal ideal) cannot be special. There are also Suslin towers generating meager ideals, see Section \ref{suslin-towers}.

The analogy between towers and gaps is strong, at least in the sense that many results 
about gaps can be easily modified for the case of towers. E.g.\ under $\mathsf{MA}_{\om_1}$ each gap is Hausdorff as well as each tower
(of size $\omega_1$) contains a subtower with condition (H). In a model obtained by adding a single Cohen real we can produce a non-special gap and a non-special tower practically in the same way. However, this analogy breaks up in many ways. 
Under $\mathsf{PID}$ each gap is Hausdorff, but we show that the existence of a non-special tower is consistent with $\mathsf{PID}$ (see Section~\ref{special-towers}). 
On the other hand, Theorem~\ref{PID} states that $\mathsf{PID} + \om_1 < \mathfrak b$ is a sufficient condition for all towers being Hausdorff.
It becomes apparent in Section~\ref{Tukey} that this result is related to the ``only 5 Tukey types'' theorems.
We  also prove that consistently there is a Hausdorff tower which generates a dense ideal and thus cannot be a half of any gap (Example~\ref{gen-haus}), and a special tower which is equivalent (in the sense
of generating the same ideal) to a Suslin tower and thus is not Hausdorff (Example~\ref{special-nonHaus}). 
Some of these results are contained implicitly in~\cite{oscillations}. 

The theory of towers is a debtor of the theory of gaps, but it is not an ungrateful one. In fact, the analysis of the property of being a special tower has led us
to an example of a gap which is special but not equivalent to a Hausdorff gap (Example \ref{special-nonLO}). In \cite{scheepers} Scheepers asked about the existence of such an object and Hirschorn in \cite{hirschorn} answered this question affirmatively. 
Our example is of a different sort than the one of Hirschorn and it has a simpler description. Namely, Hirschorn showed that there is a special gap which does not satisfy a certain weaker condition than being Hausdorff (we call it \emph{left-oriented}). We present an
example which is left-oriented but not Hausdorff. In Section \ref{gaps-towers} we offer other examples of this kind (many of them exist in any model obtained by adding $\om_1$ many Cohen reals). In particular, we prove the consistent existence of
a Hausdorff gap $(L_\alpha, R_\alpha)_{\alpha<\omega_1}$ such that $(R_\alpha, L_\alpha)_{\alpha<\omega_1}$ is not Hausdorff (Example \ref{uninvertible-haus}), a special gap $(L_\alpha,R_\alpha)_{\alpha<\omega_1}$ such that neither $(L_\alpha,R_\alpha)_{\alpha<\omega_1}$
nor $(R_\alpha,L_\alpha)_{\alpha<\omega_1}$ are left-oriented (Example \ref{ex-non-oriented}), and a gap $(L_\alpha,R_\alpha)_{\alpha<\omega_1}$ which is left-oriented but not Hausdorff and $(R_\alpha,L_\alpha)_{\alpha\in\omega_1}$ is special but not left-oriented
(Theorem \ref{David0}). At the end of Section \ref{gaps-towers} we come back to towers to construct a special non-Hausdorff tower which is not equivalent to a Suslin tower.

Towers are often used as a combinatorial tool in set theory, set theoretic topology and functional analysis. E.g.\ Stone spaces of Boolean subalgebras of $\mathcal{P}(\omega)$ generated by towers (and $[\omega]^{<\omega}$) are ordered compacta being continuous images of $\om^*$. Bell in \cite{Bell} used a tower to construct a compact separable space which does not continuously map onto $[0,1]^{\om_1}$ and which does not have a countable $\piup$-base. In \cite{KunenDouwen} a non-special tower generates an L-space and a S-space, both  subspaces of $\mathcal{P}(\om)$ equipped with the Vietoris topology. However, no additional properties of towers are usually needed (with the exception of the last result),
except possibly of some maximality properties like generating a dense ideal (i.e. such that every infinite subset of $\omega$ contains an infinite element of the ideal), or a maximal ideal. Perhaps this is the reason why there were not many attempts to develop a structure theory for towers. 

This article can be treated as a modest contribution to the program of filling this gap. Properties of being special or Hausdorff demarcate some dividing lines in the class of towers. In Section \ref{Tukey} we try to examine possible ways to expand
this research.
We use the Tukey ordering, a tool which has proved its worth in exploring the structure of ultrafilters (see \cite{Natasha}). We show that an $\omega_1$-tower is Hausdorff if and only if it is Tukey top among directed sets of size $\omega_1$. Using results from \cite{Natasha}, we observe that consistently there are $2^\mathfrak{c}$ many pairwise incomparable Tukey types of $\omega_1$-towers.

\section{Preliminaries on gaps}\label{gaps}

It will be convenient to start with definitions and basic facts about gaps. 
More details can be found in \cite{scheepers} and \cite{yorioka}.

Recall that $\left(L_\alpha,R_\alpha\right)_{\alpha< \om_1}$ is \emph{a pre-gap} if $L_\alpha \cap R_\alpha = \emptyset$ for each $\alpha<\om_1$ and
both $\left(L_\alpha\right)_{\alpha<\om_1}$ and $\left(R_\alpha\right)_{\alpha<\om_1}$ are towers. A pre-gap $\left(L_\alpha,R_\alpha\right)_{\alpha<\om_1}$ forms \emph{an ($\omega_1,\omega_1$)-gap}
if there is no set $L$ interpolating it, i.e.\ no set $L$ such that $L_\alpha\sub^* L$ and $R_\alpha\cap L =^* \emptyset$ for every $\alpha<\om_1$.

More generally, $\left(L_\alpha, R_\beta\right)_{\alpha<\lambda, \beta<\kappa}$ is a $(\lambda,\kappa)$-gap if $L_\alpha \cap R_\beta =^* \emptyset$ for every $\alpha<\lambda$ and $\beta<\kappa$ and there is no $L$ such that $L_\alpha\sub^* L$ and $L
\subsetneq^* R^c_\beta$ for every $\alpha<\lambda$ and $\beta<\kappa$. Notice that the last inequality is slightly more complicated than the equality $L \cap R_\beta =^* \emptyset$ but this setting enables us to consider $(\lambda,1)$-gaps. In what follows, a gap is an $(\omega_1,\omega_1)$-gap unless stated otherwise.

We say that a gap $(L_\alpha,R_\alpha)_{\alpha<\om_1}$ satisfies \emph{condition (H)} if
\[ \left\{\xi<\alpha\colon L_\xi \cap R_\alpha \sub n\right\} \text{ is finite } \]
for each $\alpha<\om_1$ and $n< \om$. Similarly, a (pre-)gap satisfies \emph{condition (K)} if
\[\left( L_\alpha \cap R_\beta\right) \cup \left(L_\beta \cap R_\alpha\right) \neq \emptyset \]
for each $\alpha < \beta<\om_1$. Finally, a (pre-)gap satisfies \emph{condition (O)} if
\[ L_\alpha \cap R_\beta \neq \emptyset\]
for each $\alpha < \beta<\om_1$.

Now we are ready to define basic types of gaps (the first two are well-known in the literature). 

\begin{defi}\label{gap-types}
	A subgap of a gap $(L_\alpha, R_\alpha)_{\alpha<{\kappa}}$ is a gap $(L_\alpha, R_\alpha)_{\alpha\in I}$, where $I$ is a cofinal subset of $\kappa$.
	A gap is called \emph{Hausdorff} if it contains a subgap satisfying condition (H).
	A gap is called \emph{special} (or \emph{indestructible}) if it contains a subgap satisfying condition~(K).
	A gap $(L_\alpha, R_\alpha)_{\alpha<{\om_1}}$ is called \emph{left-oriented} (or just  \emph{oriented}) if it contains a subgap satisfying condition~(O). It is \emph{right-oriented} if $(R_\alpha, L_\alpha)_{\alpha<{\om_1}}$ is left-oriented.
\end{defi}

The name ``indestructible'' for special gaps is due to the fact that these are precisely gaps indestructible by $\om_1$ preserving forcing notions.

\begin{thm}[Kunen, see \cite{scheepers}]
	For a gap $\mathcal G = \left(L_\alpha, R_\alpha\right)_{\alpha<\omega_1}$ the following are equivalent
	\begin{enumerate}
		\item $\mathcal G$ is special;
		\item $\mathcal G$ is a gap in every $\om_1$ preserving extension of the universe of sets $V$;
		\item $\mathcal G$ is a gap in every generic extension of the universe obtained by a ccc-forcing.
	\end{enumerate}
\end{thm}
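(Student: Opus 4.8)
The plan is to establish the cycle $(1)\Rightarrow(2)\Rightarrow(3)\Rightarrow(1)$. The implication $(2)\Rightarrow(3)$ is immediate: every ccc forcing preserves $\om_1$, so a ccc extension is in particular an $\om_1$-preserving extension, and $(2)$ asserts $\mathcal G$ survives in all of the latter. The two substantial steps are $(1)\Rightarrow(2)$ and $(3)\Rightarrow(1)$.

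For $(1)\Rightarrow(2)$ I would isolate an absoluteness lemma for condition (K): if a pre-gap $(L_\alpha,R_\alpha)_{\alpha\in I}$ with $I$ uncountable satisfies condition (K), then it remains a gap in every $\om_1$-preserving extension $W$. To prove it, suppose $L\in W$ interpolated the pre-gap; for each $\alpha$ pick $k_\alpha$ with $L_\alpha\sm k_\alpha\sub L$ and $L\cap R_\alpha\sub k_\alpha$, and record the finite datum $(k_\alpha,\,L_\alpha\cap k_\alpha,\,R_\alpha\cap k_\alpha)$. Since $I$ is uncountable in $W$ (as $\om_1$ is preserved) and this datum ranges over a countable set, stabilize it to a constant $(k,u,v)$ on an uncountable $S\sub I$. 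For distinct $\alpha,\beta\in S$, condition (K) supplies (say) some $x\in L_\alpha\cap R_\beta$: if $x\ge k$ then $x\in L\cap R_\beta$ with $x\ge k_\beta$, contradicting $L\cap R_\beta\sub k_\beta$; if $x<k$ then $x\in u\sub L_\beta$ and $x\in v\sub R_\beta$, so $x\in L_\beta\cap R_\beta=\emptyset$, again a contradiction. Now if $\mathcal G$ is special it contains a subgap with condition (K), and any interpolant of $\mathcal G$ restricts to one of that subgap; so the lemma forbids $\mathcal G$ from being interpolated in any $\om_1$-preserving extension, which is $(2)$.

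For $(3)\Rightarrow(1)$ I argue the contrapositive: assuming $\mathcal G$ is not special, I exhibit a ccc forcing adding an interpolant. Let $\poset$ consist of pairs $(n,F)$ with $n\in\om$ and $F\in[\om_1]^{<\om}$ such that $L_\alpha\cap R_\beta\sub n$ for all $\alpha,\beta\in F$, ordered by $(n',F')\le(n,F)$ iff $n'\ge n$ and $F'\supseteq F$. Adding any $\alpha$ to $F$ is dense (raise $n$ past the finitely many points of $\bigcup_{\beta\in F}(L_\alpha\cap R_\beta)\cup(L_\beta\cap R_\alpha)$), so the generic set $L=\{x:\ x\ge n,\ x\in L_\alpha\ \text{for some}\ (n,F)\in G,\ \alpha\in F\}$ interpolates $\mathcal G$; hence forcing with $\poset$ destroys the gap. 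It remains to show $\poset$ is ccc, and this is the step I expect to be the main obstacle.

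Given an uncountable $\{(n_\xi,F_\xi)\}$, thin out so that $n_\xi=n$ is constant and $\{F_\xi\}$ forms a $\Delta$-system with root $r$, with stalks $a_\xi=F_\xi\sm r$ of constant size arranged in increasing blocks. Because each $(n,F_\xi)$ is a condition, any conflict $L_\alpha\cap R_\beta\not\sub n$ among members of $F_\xi\cup F_\eta$ must have one index in each stalk $a_\xi,a_\eta$. The tower (monotonicity) structure is decisive here: with $c_\xi=\max a_\xi$, monotonicity of both towers gives $\bigcup_{\delta\in a_\xi}L_\delta=^* L_{c_\xi}$ and $\bigcup_{\delta\in a_\xi}R_\delta=^* R_{c_\xi}$, collapsing each stalk to the single index $c_\xi$ modulo a finite set. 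Stabilizing (pigeonhole) so these finite discrepancies lie below a common $m^*\in\om$, and applying non-specialness to the uncountable set $\{c_\xi\}$, which therefore fails condition (K), I obtain $\xi\ne\eta$ with $L_{c_\xi}\cap R_{c_\eta}=L_{c_\eta}\cap R_{c_\xi}=\emptyset$. Setting $N=\max(n,m^*)$, all cross conflicts between $a_\xi$ and $a_\eta$ are bounded by $N$, so $(N,F_\xi\cup F_\eta)$ is a common extension of $(n,F_\xi)$ and $(n,F_\eta)$; thus no uncountable antichain exists. The crucial insight dissolving the multi-element stalks is that the common extension is allowed to raise the threshold to $N$, so only a finite (hence pigeonhole-fixable) amount of conflict data must be controlled, while monotonicity reduces each stalk to its top index and lets non-specialness be applied on a single uncountable set.
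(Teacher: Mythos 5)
Your implications $(1)\Rightarrow(2)$ and $(2)\Rightarrow(3)$ are correct: the stabilization of the finite data $(k_\alpha, L_\alpha\cap k_\alpha, R_\alpha\cap k_\alpha)$ on an uncountable set surviving into the $\om_1$-preserving extension is exactly the standard argument (the paper itself only cites this theorem, so the comparison is against the classical proof). The genuine gap is in $(3)\Rightarrow(1)$: your destruction poset is \emph{forcing-trivial}. Since both towers are $\sub^*$-increasing and $L_\gamma\cap R_\gamma=\emptyset$, every intersection $L_\alpha\cap R_\beta$ is finite; hence for \emph{any} two conditions $(n,F)$ and $(n',F')$ you may choose $N$ above every point of $L_\alpha\cap R_\beta$ for $\alpha,\beta\in F\cup F'$, and $(N, F\cup F')$ is a common extension. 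So your $\mathbb{P}$ is directed — your $\Delta$-system argument is vacuously true, and the fact that you never really needed non-specialness (you could have set $N$ to be the maximum of all conflicts outright, with no appeal to the failure of (K)) is the symptom. A directed poset has a one-point separative quotient, so forcing with it adds nothing and cannot interpolate any gap. The generic extraction degenerates accordingly: $(0,\{\alpha\})$ is a condition weaker than any condition mentioning $\alpha$, so by upward closure of the generic filter your $L$ is just $\bigcup_{\alpha\in C} L_\alpha$ with all thresholds trivialized to $0$, and there is no reason whatsoever for $L\cap R_\beta$ to be finite.

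What is missing is a mechanism making the threshold \emph{binding on future extensions}. The standard fix (and the shape of every poset in Section 5 of this paper, e.g.\ the Spasojevi\'c poset of Example \ref{David}) is to let a condition carry a working part: $p=(s_p,F_p)$ with $s_p\in 2^{n_p}$ a finite approximation to the interpolant, where $q\leq p$ requires $s_q\supseteq s_p$ and, for all $i\in[n_p,n_q)$ and $\alpha\in F_p$: if $i\in L_\alpha$ then $s_q(i)=1$, and if $i\in R_\alpha$ then $s_q(i)=0$. Internal coherence then forces your clause $L_\alpha\cap R_\beta\sub n_p$ for $\alpha,\beta\in F_p$, but now raising $n$ is \emph{not} free: two conditions with the same $n$ are compatible only if their cross-conflicts above $n$ vanish, and this is exactly where non-specialness must work. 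Your stalk-collapsing idea (tops $c_\xi$, discrepancies below $m^*$, failure of (K) on $\{c_\xi\}$) is the right ingredient for that ccc proof, but note you need to stabilize the actual traces $L_\alpha\cap m^*$, $R_\alpha\cap m^*$ to be \emph{equal} across the $\Delta$-system (not merely bound the discrepancies by $m^*$), so that any conflict in $[n,m^*)$ between two stalks transfers to a conflict inside a single condition and is thereby excluded; then all cross-conflicts lie above $m^*$, the (K)-failing pair kills them entirely, and the two conditions amalgamate. With the promise clause the generic $s$ yields $L$ with $L_\alpha\sm n_\alpha\sub L$ and $L\cap R_\alpha\sub n_\alpha$ for every $\alpha$ in the generic cofinal set, which genuinely interpolates $\mathcal G$.
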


For $i < 2$ consider the gaps $\left(L^i_\alpha,R^i_\alpha\right)_{\alpha<\om_1}$.  We say that these two gaps are \emph{equivalent} if $\mathcal{L}^0 = \mathcal{L}^1$  and $\mathcal{R}^0 = \mathcal{R}^1$, where $\mathcal{L}^i$ is the ideal generated by
$\left(L_\alpha^i\right)_{\alpha<\om_1}$ (i.e.\ $\mathcal{L}^i = \left\{A\sub \om\colon \exists \alpha<\om_1 \ A\sub^* L^i_\alpha\right\}$), and $\mathcal{R}^i$ is the ideal generated by $\left(R_\alpha^i\right)_{\alpha<\om_1}$.

\begin{lem}
Properties in Definition \ref{gap-types} respect equivalence of gaps.	
\end{lem}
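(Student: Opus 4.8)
The plan is to prove each equivalence by one symmetric argument: fixing equivalent gaps $\mathcal G^0=\left(L^0_\alpha,R^0_\alpha\right)$ and $\mathcal G^1=\left(L^1_\alpha,R^1_\alpha\right)$ with $\mathcal L^0=\mathcal L^1$ and $\mathcal R^0=\mathcal R^1$, I would show that a property from Definition~\ref{gap-types} holding for $\mathcal G^0$ forces it for $\mathcal G^1$; swapping the superscripts gives the converse, and right-orientation is handled by swapping $L$ and $R$. Two reductions set things up. First, each of (H), (K), (O) is preserved under \emph{thinning}: if it holds on an uncountable index set $A\sub\om_1$ it holds on every uncountable $A'\sub A$, since the sets named in the definitions only shrink. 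So being Hausdorff/special/oriented just asserts that some uncountable witnessing $A$ works, and I may thin $A$ freely. Second, from the equality of ideals I fix $f\colon\om_1\to\om_1$ with $L^0_\alpha\sub^* L^1_{f(\alpha)}$ and $R^0_\alpha\sub^* R^1_{f(\alpha)}$; the finite errors $e^L_\alpha=L^0_\alpha\sm L^1_{f(\alpha)}$ and $e^R_\alpha=R^0_\alpha\sm R^1_{f(\alpha)}$ take only countably many max-values, so by pigeonhole I thin $A$ to an uncountable set on which $\max e^L_\alpha,\max e^R_\alpha\le D$ for a fixed $D$. Enumerating it as $(\alpha_\xi)$ with both $(\alpha_\xi)$ and $(\beta_\xi)=(f(\alpha_\xi))$ strictly increasing (a routine recursive choice, possible as $f$ is cofinal), I obtain a subgap $B=\{\beta_\xi\}$ of $\mathcal G^1$ with the uniform bound $L^1_{\beta_\xi}\cap R^1_{\beta_\eta}\supseteq\left(L^0_{\alpha_\xi}\cap R^0_{\alpha_\eta}\right)\sm[0,D]$ for all $\xi,\eta$.

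For the \emph{special} case I would bypass this bookkeeping and use Kunen's theorem (stated above). Being an $(\om_1,\om_1)$-gap depends only on the pair $(\mathcal L,\mathcal R)$ of generated ideals, since an interpolant is a set $L$ with $\mathcal L\sub\{X\colon X\sub^* L\}$ that is almost disjoint from every member of $\mathcal R$. The towers of $\mathcal G^0$ and $\mathcal G^1$ are cofinally interleaved, and this (a statement about the actual sets modulo finite) is absolute to any $\om_1$-preserving extension, so they generate the same ideals there as well; hence $\mathcal G^0$ is a gap in a given ccc extension iff $\mathcal G^1$ is. By the equivalence of (1) and (3) in Kunen's theorem, specialness of $\mathcal G^0$ and of $\mathcal G^1$ coincide.

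For the \emph{Hausdorff} case the uniform bound does the work. Fix $\beta_\eta\in B$ and $n\in\om$. Any element $\ge\max(n,D+1)$ of $L^0_{\alpha_\xi}\cap R^0_{\alpha_\eta}$ avoids $[0,D]$, hence survives into $L^1_{\beta_\xi}\cap R^1_{\beta_\eta}$ and witnesses $L^1_{\beta_\xi}\cap R^1_{\beta_\eta}\not\sub n$. By condition (H) for $\mathcal G^0$ at $\alpha_\eta$ with threshold $\max(n,D+1)$, all but finitely many $\xi<\eta$ provide such an element, so $\{\xi\colon L^1_{\beta_\xi}\cap R^1_{\beta_\eta}\sub n\}$ is finite. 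This is exactly condition (H) for the subgap $B$, so $\mathcal G^1$ is Hausdorff.

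The \emph{oriented} case is the main obstacle, and it is genuinely harder: condition (O) asks for nonempty (not merely ``large modulo finite'') intersections, a feature not respected by the almost-inclusions behind equivalence, since the single point of $L^0_{\alpha_\xi}\cap R^0_{\alpha_\eta}$ may be swallowed by $[0,D]$. Here the gap (no-interpolant) property must enter. Assuming $\mathcal G^0$ satisfies (O), I would first discard the \emph{bad} indices $\alpha$ for which $L^0_\alpha\cap\bigcup_\gamma R^0_\gamma$ is finite: if uncountably many were bad then, with $Z=\om\sm\bigcup_\gamma R^0_\gamma$, one would have $L^0_\alpha\sub^* Z$ cofinally (hence for all $\alpha$) while $R^0_\gamma\cap Z=\emptyset$, so $Z$ would interpolate $\mathcal G^0$, a contradiction. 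On the co-countable set of good indices $L^0_\alpha\cap\bigcup_\gamma R^0_\gamma$ is infinite, hence has infinitely many points above $D$, and I would build the matched subgap recursively, at stage $\eta$ choosing $\alpha_\eta$ (so $\beta_\eta=f(\alpha_\eta)$) large enough that $L^0_{\alpha_\xi}\cap R^0_{\alpha_\eta}$ meets $(D,\infty)$ for every previously chosen $\xi$; the uniform bound then yields $L^1_{\beta_\xi}\cap R^1_{\beta_\eta}\ne\emptyset$, i.e.\ (O) for the subgap of $\mathcal G^1$. The step I expect to cost the most is that the right tower is only $\sub^*$-increasing, so a point placed in some $R^0_\gamma$ need not persist in $R^0_{\alpha_\eta}$ for larger $\alpha_\eta$; making the recursive selection work therefore needs the infinitude secured above together with a careful, possibly free-set/set-mapping, choice of the indices $\alpha_\eta$.
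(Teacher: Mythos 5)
Your argument for (H) is correct, and your route for (K) through Kunen's theorem is legitimate and genuinely different from the paper: since an interpolant is determined by the pair of generated ideals and the interleaving $\sub^*$-relations are absolute, gap-ness in ccc extensions transfers between equivalent gaps, so specialness does too. The genuine gap is in the oriented case, and you flag it yourself without closing it. At stage $\eta$ you must find a single $\alpha_\eta$ such that $L^0_{\alpha_\xi}\cap R^0_{\alpha_\eta}$ meets $(D,\infty)$ simultaneously for all previously chosen $\xi$; but the infinitude of $L^0_{\alpha_\xi}\cap\bigcup_\gamma R^0_\gamma$ does not deliver such an $\alpha_\eta$ even for one fixed $\xi$. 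Because the right tower is only $\sub^*$-increasing, every witnessing point $x\in L^0_{\alpha_\xi}\cap R^0_\gamma$ may lie in the finite set $R^0_\gamma\setminus R^0_\delta$ for each later $\delta$; if every single $R^0_\gamma$ meets $L^0_{\alpha_\xi}$ only finitely (which nothing you proved rules out), the set of $\delta$ with $L^0_{\alpha_\xi}\cap R^0_\delta\not\sub[0,D]$ can a priori be bounded, and the recursion stalls. Condition (O) for $\mathcal G^0$ only guarantees one point of intersection, which may sit below $D$ --- precisely the obstruction; a free-set/set-mapping choice does not obviously repair it.

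The missing idea --- which is in fact the paper's entire proof --- is one more pigeonhole inside your own setup: besides bounding the error sets by $D$, thin the index set so that the traces $L^0_{\alpha_\xi}\cap[0,D]$ and $R^0_{\alpha_\xi}\cap[0,D]$ are \emph{constant}, say $s$ and $t$. Since $L^0_\alpha\cap R^0_\alpha=\emptyset$ at every level, $s\cap t=\emptyset$, so for all $\xi,\eta$ the intersection $L^0_{\alpha_\xi}\cap R^0_{\alpha_\eta}$ is automatically disjoint from $[0,D]$ and therefore contained in $L^1_{\beta_\xi}\cap R^1_{\beta_\eta}$ \emph{exactly}, not merely modulo $[0,D]$. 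With that, (H), (K) and (O) all transfer in one line, since each condition is monotone in the cross-intersections; your interpolant digression, the recursion, and even the appeal to Kunen become unnecessary. This is exactly the paper's argument: it interleaves the two gaps into cofinal subgaps $(O'_\alpha,P'_\alpha)$ and $(O_\alpha,P_\alpha)$ with a fixed $n$ such that $O'_\alpha\setminus n\sub O_\alpha$, $P'_\alpha\setminus n\sub P_\alpha$ and the initial segments $O'_\alpha\cap n$, $P'_\alpha\cap n$ are constant, and then observes $O'_\alpha\cap P'_\beta\sub O_\alpha\cap P_\beta$ for all $\alpha,\beta$, treating all three conditions uniformly.
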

\begin{proof}
	Suppose $(L_\alpha', R_\alpha')$ satisfies condition ($\star$) (where $\star$ is one of H, K, O) and $(L_\alpha, R_\alpha)$ is an equivalent gap.
	We can find cofinal subgaps 
	\[(L_\alpha', R_\alpha')_{\alpha \in I'} = (O_\alpha', P_\alpha')_{\alpha < \om_1}, \ \ \ \ (L_\alpha, R_\alpha)_{\alpha \in I} = (O_\alpha, P_\alpha)_{\alpha < \om_1},\] 
	and an integer $n$ such that $O_\alpha'\setminus n \sub O_\alpha$, $P_\alpha'\setminus n \sub P_\alpha$, and both $O_\alpha' \cap n$ and $P_\alpha' \cap n$ are constant for each $\alpha < \om_1$.
	Since $(O_\alpha', P_\alpha')_{\alpha < \om_1}$ satisfies ($\star$) and for $\alpha, \beta < \om_1$ $O_\alpha' \cap P_\beta'\sub O_\alpha \cap P_\beta$, the gap $(O_\alpha, P_\alpha)_{\alpha < \om_1}$ satisfies ($\star$) as well.
\end{proof}

The following simple fact reveals the connection between Hausdorff and left oriented gaps. 

\begin{fact} \label{Hausdorff is left oriented}
	Every Hausdorff gap $\mathcal G = (L_\alpha, R_\alpha)$ is a left oriented (special) gap.
\end{fact}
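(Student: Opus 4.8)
The plan is to prove the substantive implication, that a Hausdorff gap is left oriented; the parenthetical assertion that it is then special is immediate, since condition (O) trivially entails condition (K): for distinct $\alpha,\beta$, say $\alpha<\beta$, the term $L_\alpha\cap R_\beta$ is already nonempty, so $(L_\alpha\cap R_\beta)\cup(L_\beta\cap R_\alpha)\neq\emptyset$. So I fix a subgap of $\mathcal G$ witnessing that $\mathcal G$ is Hausdorff, and after reindexing I may assume $(L_\alpha,R_\alpha)_{\alpha<\om_1}$ itself satisfies condition (H). Reading (H) at $n=0$, and noting that $L_\alpha\cap R_\beta\sub 0$ means $L_\alpha\cap R_\beta=\emptyset$, I get that for every $\beta<\om_1$ the set
\[ F_\beta=\{\alpha<\beta\colon L_\alpha\cap R_\beta=\emptyset\} \]
is finite. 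Thus $\beta\mapsto F_\beta$ is a finite, regressive set mapping on $\om_1$. An uncountable set $I$ that is \emph{free} for it, meaning $\alpha\notin F_\beta$ whenever $\alpha<\beta$ are both in $I$, immediately yields a subgap $(L_\alpha,R_\alpha)_{\alpha\in I}$ with $L_\alpha\cap R_\beta\neq\emptyset$ for all $\alpha<\beta$ in $I$, i.e.\ a subgap satisfying condition (O).

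The crux is therefore the production of this free set, and this is where I expect the only genuine work to lie; everything before and after is bookkeeping. I would obtain it by a pressing-down induction on the (made constant) size of $F_\beta$. Since $\om_1=\bigcup_{k<\om}\{\beta\colon |F_\beta|=k\}$ is a countable union and $\om_1$ is stationary, some piece $S$ is stationary; fix the corresponding $k$ and induct on $k$. If $k=0$ then $F_\beta=\emptyset$ on $S$, so $S$ is already free. For the inductive step, $|F_\beta|=k\geq 1$ on $S$, so $\beta\mapsto\min F_\beta$ is a well-defined regressive map on $S$, and by Fodor's lemma it is constant, with value $\gamma_0$, on a stationary $S'\sub S$. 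On $S'$ the set mapping $F'_\beta=F_\beta\sm\{\gamma_0\}$ has size $k-1$, so the inductive hypothesis provides a stationary free set $T\sub S'$ for $F'$. Then $T\sm\{\gamma_0\}$ is free for the original mapping: for $\alpha<\beta$ in $T\sm\{\gamma_0\}$ we have $\alpha\notin F'_\beta$ by freeness and $\alpha\neq\gamma_0$, hence $\alpha\notin F_\beta$.

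Since $T\sm\{\gamma_0\}$ is still stationary, hence uncountable, I may take $I=T\sm\{\gamma_0\}$, which completes the induction and the construction of the free set. The subgap indexed by $I$ satisfies condition (O), so $\mathcal G$ is left oriented; as noted, condition (O) gives condition (K) on the same subgap, so $\mathcal G$ is also special. I expect the free-set step above to be the only point requiring care, and it can alternatively be quoted directly from the free set theorem for finite set mappings on $\om_1$.
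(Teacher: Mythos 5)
Your proof is correct and takes essentially the same route as the paper: both read condition (H) at $n=0$ to obtain the finite regressive set mapping $\beta\mapsto\{\alpha<\beta\colon L_\alpha\cap R_\beta=\emptyset\}$ and extract an uncountable free set, whose indices give a subgap satisfying condition (O), hence (K). The only difference is presentational — the paper simply quotes Hajnal's free set theorem where you prove the needed instance by hand via Fodor's lemma and induction on the constant size of $F_\beta$, which is a sound substitute.
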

\begin{proof}
	Define a set mapping $f\colon \om_1 \to [\om_1]^{<\om}$ by 
	\[ f(\alpha) = \left\{\xi<\alpha\colon L_\xi \cap R_\alpha = \emptyset\right\}. \]
	Hajnal's free set theorem (see e.g.\ \cite[Corollary 44.2]{erdos}) implies that there is an unbounded $X\sub \om_1$ such that $\xi\notin f(\alpha)$ for each $\xi$, $\alpha\in X$. This means that $L_\xi \cap R_\alpha\ne \emptyset$ for each $\xi < \alpha\in X$.
\end{proof}

Under $\mathsf{MA_{\om_1}}$ or $\mathsf{PID}$ (see \cite{abraham}) every gap is Hausdorff.
It is consistent to have special non-Hausdorff gaps; the first example of such gap was constructed in \cite{hirschorn}.
In Section \ref{gaps-towers} we provide a construction of a special non-Hausdorff gap of a quite different nature.

For a given tower $(T_\alpha)_{\alpha<\om_1}$ is always possible to construct
a Hausdorff 
gap $(L_\alpha,R_\alpha)_{\alpha<\om_1}$ such that
$L_\alpha \cup R_\alpha = T_\alpha$ for each $\alpha < \om_1$. 
It is even possible to construct a large system of such 
gaps \cite{talayco,farah,morgan2}.

It is worth mentioning that there is an analogy between gaps and Aronszajn trees in which destructible gaps correspond to Suslin trees (see \cite[Section 2.2]{abraham}).
Indeed, if for a given pre-gap $ \mathcal{G} = (L_\alpha, R_\alpha)_{\alpha<\omega_1}$ we introduce a compatibility relation on $\om_1$ in the following way: 
$\alpha$, $\beta<\om_1$ are compatible if  \[ \left(L_\alpha \cap R_\beta\right) \cup \left(L_\beta \cap R_\alpha\right) = \emptyset, \]
then $\mathcal{G}$ is a gap if and only if there are no uncountable chains (of pairwise compatible elements) in $\Pw{\om_1}$. Moreover, $\mathcal{G}$ is destructible if and only if there are no uncountable antichains (of~pairwise incompatible elements) in $\Pw{\om_1}$. This remark explains an analogy in results about destructible gaps and Suslin trees. E.g.\ adding a Cohen real adds both a destructible gap and a Suslin tree; under $\mathsf{MA}_{\om_1}$ there are neither Suslin trees nor destructible ($\om_1$, $\om_1$)-gaps. We will see that we can add towers to this picture.


\section{Basic definitions}\label{basic}

We consider towers, i.e.\ families $(T_\alpha)_{\alpha<\kappa}$ such that $T_\alpha \setminus T_\beta$ is finite if and only if $\alpha\leq\beta$.
We do not assume that towers are maximal, $\kappa$ is always a regular cardinal, and we consider mainly towers of length $\om_1$. 
We say that two towers are equivalent if they generate (together with $\fin$) the same ideal in $\pom$. 

We shall define three properties of towers similar to properties used for classification of gaps. 
It is convenient to reveal some connections between towers and gaps first. 

Every gap consists of two towers and every tower is a half of a gap (the other half can be built by induction). Under $\mathsf{MA}_{\om_1}$ even more is true: every $\om_1$-tower is a half of $(\om_1,\om_1)$-gap (see \cite{spasojevic1} and
\cite[Remark 2.4]{shelah885}).
However, this is not a $\mathsf{ZFC}$ theorem. Indeed, if an $\om_1$-tower is maximal, then it could be only half of $(\om_1,1)$-gap. 

There are also $\om_1$-towers of different nature which cannot be half of an $(\om_1,\om_1)$-gap. If there is an $\omega_1$-scale (i.e.\ strictly $\leq^*$-increasing sequence $(f_\alpha)_{\alpha<\omega_1}$ of elements of $\om^\om$ eventually dominating all elements of $\om^\om$), then the tower defined by $T_\alpha =
\left\{(n,m)\colon m\leq f_\alpha(n)\right\}$ is not maximal (and its orthogonal is not generated by a single set), but it cannot be half of an $(\om_1,\om_1)$-gap. To see this, notice that every set in the orthogonal of $(T_\alpha)_{\alpha<\om_1}$ is a subset of $n\times \om$ for some $n\in\om$. Assume that $(T_\alpha,
R_\alpha)_{\alpha<\om_1}$ forms an $(\om_1,\om_1)$-pre-gap. Since there are only countably many choices of $n$, without loss of generality there is a fixed $n$ for which $R_\alpha \sub n\times \om$. Clearly, $n\times \om$ interpolates $(T_\alpha, R_\alpha)_{\alpha<\om_1}$.

We say that a tower of length $\kappa$ satisfies \emph{condition (K)} if $T_\alpha \nsubseteq T_\beta$ for each $\alpha, \beta<\kappa$.

\begin{defi}
	A tower $(T_\alpha)_{\alpha<\kappa}$ is special if it contains a cofinal subtower satisfying condition (K).
	A tower which is not special is called \emph{Suslin}.
\end{defi}

The name ``Suslin'' is justified by the fact that the poset $(\mathcal{T},\sub)$ contains neither uncountable $\sub$-chains nor uncountable $\sub$-antichains 
if $\mathcal{T}$ is a Suslin $\om_1$-tower. We will later see that if we add a tower
by a forcing, checking that this forcing is ccc is often the same as checking that the generic tower is Suslin.

We say that a tower $(T_\alpha)_{\alpha<\om_1}$ satisfies \emph{condition (H)} if
\[ \left\{\xi<\alpha\colon T_\xi \setminus T_\alpha \sub n\right\} \mbox{ is finite } \]
for each $\alpha<\om_1$ and $n< \om$.
(Note that this condition can not be directly generalized for~longer towers.)

\begin{defi}
	A tower $(T_\alpha)_{\alpha< \om_1}$ is Hausdorff if it contains a subtower satisfying condition (H).
\end{defi}

The following fact implies that Hausdorff towers are quite common. 

\begin{prop} \label{half-is-Hausdorff}
Let $(L_\alpha, R_\alpha)_{\alpha < \om_1}$ be a Hausdorff gap. The tower $(L_\alpha)_{\alpha < \om_1}$ is Hausdorff.
\end{prop}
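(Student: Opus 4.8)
The plan is to exploit the built-in disjointness of the pre-gap, namely $L_\alpha \cap R_\alpha = \emptyset$ for every $\alpha$, which forces $R_\alpha \sub \om \setminus L_\alpha$ and hence the inclusion $L_\xi \cap R_\alpha \sub L_\xi \setminus L_\alpha$ for all $\xi,\alpha$. This single, elementary inclusion is exactly what converts the gap version of condition (H) into the tower version of condition (H), so the whole proof reduces to tracking index sets.

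First I would pass to a witnessing subgap. Since $\mathcal{G} = (L_\alpha, R_\alpha)_{\alpha<\om_1}$ is Hausdorff, it contains an uncountable subgap satisfying the gap condition (H); re-indexing it by $\om_1$ I may assume $\mathcal{G}$ itself satisfies
\[ \left\{\xi<\alpha\colon L_\xi \cap R_\alpha \sub n\right\} \text{ is finite} \]
for each $\alpha<\om_1$ and $n<\om$. The corresponding left parts $(L_\alpha)_{\alpha<\om_1}$ form a subtower of the original tower $(L_\alpha)_{\alpha<\om_1}$ (any subfamily of a $\sub^*$-chain is again a $\sub^*$-chain), so it suffices to verify that this subtower satisfies the tower condition (H); that will exhibit the required subtower and show the original tower is Hausdorff.

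Next I would carry out the index-set comparison. Using $L_\alpha \cap R_\alpha = \emptyset$ we get $L_\xi \cap R_\alpha \sub L_\xi \cap (\om \setminus L_\alpha) = L_\xi \setminus L_\alpha$ for all $\xi, \alpha$. Consequently, whenever $L_\xi \setminus L_\alpha \sub n$ we also have $L_\xi \cap R_\alpha \sub n$, which yields the inclusion
\[ \left\{\xi<\alpha\colon L_\xi \setminus L_\alpha \sub n\right\} \sub \left\{\xi<\alpha\colon L_\xi \cap R_\alpha \sub n\right\}. \]
The right-hand set is finite by the gap condition (H), so the left-hand set is finite as well. This is precisely the tower condition (H) for $(L_\alpha)_{\alpha<\om_1}$, completing the argument. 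I do not expect any genuine obstacle here: the statement is a direct consequence of the definitions together with the displayed inclusion, and the only point requiring a moment's care is that passing to the witnessing subgap is harmless, since the subgap is uncountable and its left parts remain a $\sub^*$-subtower of $(L_\alpha)_{\alpha<\om_1}$.
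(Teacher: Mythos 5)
Your proof is correct and follows essentially the same route as the paper: the paper's one-line argument is exactly the contrapositive of your inclusion $L_\xi \cap R_\alpha \sub L_\xi \setminus L_\alpha$ (from $L_\alpha \cap R_\alpha = \emptyset$), showing $L_\xi \cap R_\alpha \nsubseteq n$ implies $L_\xi \setminus L_\alpha \nsubseteq n$. Your extra care about passing to the witnessing subgap and re-indexing is implicit in the paper's proof and entirely harmless.
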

\begin{proof}
Since $L_\alpha \cap R_\alpha = \emptyset$ for each $\alpha<\om_1$, for every $\alpha<\beta<\om_1$ if $L_\alpha\cap R_\beta \nsubseteq n$, then $L_\alpha \setminus L_\beta \nsubseteq n$.	
\end{proof}

Similarly one can prove the following:

\begin{prop} \label{half-is-special}
Let $(L_\alpha, R_\alpha)_{\alpha < \om_1}$ be a left-oriented gap. The tower $(L_\alpha)_{\alpha < \om_1}$ is special.
\end{prop}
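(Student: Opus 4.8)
The plan is to mirror the proof of Proposition~\ref{half-is-Hausdorff}, replacing condition~(H) with condition~(O) and condition~(K). Recall that for towers, condition~(K) says $T_\alpha \nsubseteq T_\beta$ for all $\alpha,\beta$, and that a tower is special if it has a cofinal subtower satisfying~(K). So I want to produce a cofinal subtower of $(L_\alpha)_{\alpha<\om_1}$ on which no member is $\sub$-contained in another. Since the gap $(L_\alpha,R_\alpha)$ is left-oriented, it contains a cofinal subgap satisfying condition~(O), i.e.\ $L_\alpha \cap R_\beta \neq \emptyset$ for all $\alpha<\beta$ along that subgap. After passing to this subgap, reindexed as $(L_\alpha,R_\alpha)_{\alpha<\om_1}$, the goal is to verify that $(L_\alpha)_{\alpha<\om_1}$ satisfies~(K).

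First I would fix $\alpha,\beta<\om_1$ and show $L_\alpha \nsubseteq L_\beta$. The case $\alpha \leq \beta$ is immediate: $(L_\alpha)$ is a tower, so $L_\alpha \sub^* L_\beta$ while $L_\alpha \setminus L_\beta$ is finite, but distinct tower members are $\sub^*$-increasing and never literally $\sub$-contained in an earlier stage (indeed by the definition of tower in Section~\ref{basic}, $L_\alpha \sub L_\beta$ would force $L_\alpha \setminus L_\beta$ finite in a way compatible only with $\alpha \leq \beta$, and genuine $\sub$-containment is what~(K) forbids). The substantive case is $\alpha > \beta$, where I use orientation: since $\beta < \alpha$, condition~(O) gives a point $k \in L_\beta \cap R_\alpha$. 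Because $L_\alpha \cap R_\alpha = \emptyset$ (the pre-gap condition), this $k$ cannot lie in $L_\alpha$, yet $k \in L_\beta$; hence $L_\beta \nsubseteq L_\alpha$, which is exactly the instance of~(K) for this pair (after noting the roles of $\alpha,\beta$ are symmetric in the statement of~(K)).

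The remaining care is bookkeeping about which member sits inside which: condition~(K) for towers is the symmetric statement $T_\alpha \nsubseteq T_\beta$ for \emph{each} pair, so I need both $L_\alpha \nsubseteq L_\beta$ and $L_\beta \nsubseteq L_\alpha$ to fail only trivially, and the two cases above cover this once I track that orientation only directly witnesses non-containment of the earlier set in the later one. The genuinely new ingredient compared to Proposition~\ref{half-is-Hausdorff} is recognizing that the disjointness $L_\alpha \cap R_\alpha = \emptyset$ converts a nonempty intersection $L_\beta \cap R_\alpha$ into a point excluded from $L_\alpha$, which is the single line doing all the work.

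I do not expect a serious obstacle here; the main thing to get right is the indexing convention in condition~(O), which as stated in Definition~\ref{gap-types} ranges over $\alpha<\beta$, so I must apply it with the correct order of arguments and confirm that this suffices to rule out $\sub$-containment in both directions across the subtower. Thus the proof reduces to: pass to an (O)-subgap, and for $\beta<\alpha$ pick $k\in L_\beta\cap R_\alpha$, observe $k\notin L_\alpha$, and conclude $(L_\alpha)$ satisfies~(K) cofinally, hence is special.
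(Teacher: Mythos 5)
Your proof is correct and is essentially the paper's own argument: the paper derives Proposition \ref{half-is-special} ``similarly'' to Proposition \ref{half-is-Hausdorff}, via exactly your one-line observation that the disjointness $L_\beta \cap R_\beta = \emptyset$ turns a witness $k \in L_\alpha \cap R_\beta$ provided by condition (O) for $\alpha < \beta$ into a point of $L_\alpha \setminus L_\beta$, while the other direction of condition (K) is automatic because $L_\beta \setminus L_\alpha$ is infinite for $\alpha < \beta$ by the definition of a tower. One small slip in your middle paragraph: your case labels are swapped --- the genuinely immediate case is $\alpha > \beta$ (infinite difference), and $\alpha < \beta$ is the substantive one --- but this is harmless, since your ``substantive case'' does establish non-containment of the earlier set in the later one and your parenthetical supplies the trivial direction, so together both halves of (K) are covered, as your closing summary correctly states.
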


The proof of the next fact is essentially the same as the proof of Fact~\ref{Hausdorff is left oriented}.

\begin{prop} \label{Hausdorff->noninclusion}
	If $(T_\alpha)_{\alpha<\om_1}$ satisfies condition (H), then there is an unbounded $X\sub \om_1$ such that $T_\alpha\setminus T_\beta\ne \emptyset$ for each distinct $\alpha$, $\beta\in X$.
\end{prop}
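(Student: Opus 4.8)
The plan is to imitate the proof of Proposition~\ref{Hausdorff is left oriented} almost verbatim, substituting the tower condition (H) for the gap condition (H) and reading the missing direction off the definition of a tower. The engine is again Hajnal's free set theorem applied to a finite-valued set mapping.

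First I would instantiate condition (H) at $n = 0$. Since the natural number $0$ is the empty set, $T_\xi \setminus T_\alpha \sub 0$ is equivalent to $T_\xi \sub T_\alpha$, so condition (H) says precisely that
\[ f(\alpha) = \left\{\xi < \alpha \colon T_\xi \sub T_\alpha\right\} \]
is finite for every $\alpha < \om_1$. Hence $f \colon \om_1 \to [\om_1]^{<\om}$ is a set mapping with $f(\alpha) \sub \alpha$, and Hajnal's free set theorem yields an unbounded free set $X \sub \om_1$ with $\xi \notin f(\alpha)$ whenever $\xi, \alpha \in X$ and $\xi < \alpha$. By the definition of $f$ this says $T_\xi \setminus T_\alpha \ne \emptyset$ for all $\xi < \alpha$ in $X$.

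Finally I would upgrade this one-sided conclusion to the symmetric statement. Fix distinct $\alpha, \beta \in X$ and assume $\alpha < \beta$. The free set property gives $T_\alpha \setminus T_\beta \ne \emptyset$; in the other direction, since $\beta \not\le \alpha$ the defining property of a tower forces $T_\beta \setminus T_\alpha$ to be infinite, hence nonempty. Thus $T_\gamma \setminus T_\delta \ne \emptyset$ for any two distinct $\gamma, \delta \in X$, as required. The only point deserving attention is that condition (H) by itself controls only the ``downward'' differences $T_\xi \setminus T_\alpha$ with $\xi < \alpha$, which are finite and therefore could a priori be empty; it is exactly these that the free set theorem eliminates, while the ``upward'' differences are nonempty automatically because a tower is strictly $\sub^*$-increasing. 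There is thus no genuine obstacle beyond a correct invocation of Hajnal's theorem.
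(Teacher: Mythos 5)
Your proposal is correct and follows essentially the same route as the paper: the same set mapping $f(\alpha) = \{\xi<\alpha\colon T_\xi \sub T_\alpha\}$ (whose finiteness is condition (H) read at $n=0$) and the same application of Hajnal's free set theorem to get an unbounded free set. Your explicit remark that the ``upward'' differences $T_\beta\setminus T_\alpha$ for $\alpha<\beta$ are automatically infinite by the tower definition is a point the paper leaves implicit, but it is the same argument.
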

\begin{proof}
	Define $f\colon \om_1 \to [\om_1]^{<\om}$ by \[ f(\alpha) = \left\{\xi<\alpha\colon T_\xi \sub T_\alpha\right\}. \]
Hajnal's free set theorem implies that there is an unbounded $X\sub \om_1$ such that $\xi\notin f(\alpha)$ for each $\xi$, $\alpha\in X$. This means that $T_\xi \setminus T_\alpha\ne \emptyset$ for each $\xi < \alpha\in X$.
\end{proof}

\begin{cor}
	Every Hausdorff tower is special.
\end{cor}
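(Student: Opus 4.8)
The plan is to read this off directly from Proposition~\ref{Hausdorff->noninclusion} together with the two relevant definitions. First I would unwind the statement. A Hausdorff tower, by definition, contains a subtower $(T_\alpha)_{\alpha \in I}$ satisfying condition~(H); since condition~(H) is formulated for towers of length $\om_1$, the index set $I \sub \om_1$ has order type $\om_1$ and is therefore automatically unbounded, hence cofinal, in $\om_1$. To conclude that the original tower is special I must exhibit a \emph{cofinal} subtower satisfying condition~(K), that is, a cofinal set of indices along which $T_\alpha \nsubseteq T_\beta$ holds for all distinct pairs.

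The main step is to apply Proposition~\ref{Hausdorff->noninclusion} to the subtower witnessing condition~(H). After reindexing this subtower as $(S_\gamma)_{\gamma < \om_1}$ with $S_\gamma = T_{i_\gamma}$, where $\gamma \mapsto i_\gamma$ is the increasing enumeration of $I$ (the relabelling clearly preserves condition~(H)), the proposition produces an unbounded $X \sub \om_1$ with $S_\alpha \setminus S_\beta \ne \emptyset$ for all distinct $\alpha, \beta \in X$. The only observation required is that for towers the inequality $T_\alpha \setminus T_\beta \ne \emptyset$ is literally the non-inclusion $T_\alpha \nsubseteq T_\beta$, which is exactly condition~(K). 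Hence $(S_\gamma)_{\gamma \in X}$ satisfies condition~(K).

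It then remains to verify cofinality. Since $X$ is unbounded in $\om_1$ and the enumeration $\gamma \mapsto i_\gamma$ of the cofinal set $I$ is increasing and unbounded, the composite index set $\{\, i_\gamma : \gamma \in X \,\}$ is again cofinal in $\om_1$: given $\beta < \om_1$, choose $\gamma_0$ with $i_{\gamma_0} > \beta$ and then $\gamma \in X$ with $\gamma > \gamma_0$, so that $i_\gamma \ge i_{\gamma_0} > \beta$. Thus the original tower contains a cofinal subtower satisfying condition~(K), and is special. I do not expect a genuine obstacle; the corollary is essentially a restatement of Proposition~\ref{Hausdorff->noninclusion} once one notices that condition~(K) for towers is precisely the non-inclusion it delivers. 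The only point needing a moment's care is the bookkeeping that a subtower of a subtower still indexes a cofinal subset of $\om_1$, and that condition~(H) is inherited under the reindexing so that the proposition applies.
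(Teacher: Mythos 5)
Your proposal is correct and follows exactly the route the paper intends: the corollary is stated as an immediate consequence of Proposition~\ref{Hausdorff->noninclusion} applied to the subtower witnessing condition~(H), with the non-inclusion $T_\alpha \setminus T_\beta \ne \emptyset$ read as condition~(K) on the unbounded set $X$. The reindexing and cofinality bookkeeping you spell out is exactly the routine verification the paper leaves implicit.
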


In particular, Hausdorff gaps provide examples of uncountable towers which form anti-chains if ordered by $\sub$. Since Hausdorff gaps exist in $\mathsf{ZFC}$, it follows that special towers exist in $\mathsf{ZFC}$. 

There are facts indicating that the notion of a Hausdorff tower is more natural than the notion of a special tower. The next proposition shows that this is a ``global'' property, whereas Example \ref{special-nonHaus} will demonstrate that this is not the case of special
towers. (Another fact supporting the statement above is discussed in Section \ref{Tukey}.)

\begin{prop} \label{Hausdorff-equivalent}
	If a tower $(T_\alpha)_{\alpha < \om_1}$ is equivalent to a Hausdorff tower $(S_\alpha)_{\alpha< \om_1}$, then $(T_\alpha)_{\alpha< \om_1}$ is Hausdorff.
\end{prop}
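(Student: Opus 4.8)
The plan is to reduce to the case where $(S_\alpha)_{\alpha<\om_1}$ itself satisfies condition (H), and then to weave the two towers together into a single $\sub^*$-chain. Observe first that if a tower satisfies (H) then so does each of its subtowers (an injective reindexing carries the finite witnessing sets to finite sets), and that a cofinal subtower generates the same ideal. Hence I may replace $(S_\alpha)$ by the subtower witnessing that it is Hausdorff: that subtower still satisfies (H) and, being cofinal, is still equivalent to $(T_\alpha)$. So assume from now on that $(S_\alpha)_{\alpha<\om_1}$ satisfies (H); the goal is to produce a cofinal subtower of $(T_\alpha)$ with (H).

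Since the two towers generate the same ideal, fix $g,h\colon\om_1\to\om_1$ with $S_\alpha\sub^* T_{g(\alpha)}$ and $T_\alpha\sub^* S_{h(\alpha)}$ for every $\alpha$. I would build, by recursion on $\gamma<\om_1$, strictly increasing sequences $(a_\gamma)$ and $(b_\gamma)$ of ordinals so that, writing $S'_\gamma=S_{a_\gamma}$ and $T'_\gamma=T_{b_\gamma}$, one obtains the sandwich $S'_\gamma\sub^* T'_\gamma\sub^* S'_{\gamma+1}$ for all $\gamma$. At stage $\gamma$ one first picks $a_\gamma$ above every previously used index and above $h(b_\delta)$ for all $\delta<\gamma$; since a tower is monotone under $\sub^*$, this forces $T'_\delta\sub^* S_{h(b_\delta)}\sub^* S'_\gamma$ for all $\delta<\gamma$, and applied at stage $\gamma+1$ it yields the upper half of the sandwich, $T'_\gamma\sub^* S'_{\gamma+1}$. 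One then picks $b_\gamma$ above every previous index and above $g(a_\gamma)$, giving $S'_\gamma\sub^* T_{g(a_\gamma)}\sub^* T'_\gamma$. All suprema involved are countable, hence below $\om_1$, so the recursion passes through every limit stage; this limit bookkeeping is the one place in the construction that needs a little care.

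Next I would uniformize the lower error. For each $\xi$ let $a_\xi$ now denote the least integer with $S'_\xi\setminus T'_\xi\sub a_\xi$ (finite by the sandwich); by pigeonhole there is a $k<\om$ and an uncountable $A\sub\om_1$ with $S'_\xi\setminus T'_\xi\sub k$ for all $\xi\in A$. I claim $(T'_\gamma)_{\gamma\in A}$ satisfies (H). Fix $\gamma\in A$ and $n<\om$, and choose $c_\gamma<\om$ with $T'_\gamma\setminus S'_{\gamma+1}\sub c_\gamma$. For $\xi\in A$ with $\xi<\gamma$, the inclusions $S'_\xi\sub T'_\xi\cup k$ and (by complementation) $\om\setminus S'_{\gamma+1}\sub(\om\setminus T'_\gamma)\cup c_\gamma$ give
\[ S'_\xi\setminus S'_{\gamma+1}\sub (T'_\xi\setminus T'_\gamma)\cup k\cup c_\gamma. \]
Thus if $T'_\xi\setminus T'_\gamma\sub n$ then $S'_\xi\setminus S'_{\gamma+1}\sub m$, where $m=\max(n,k,c_\gamma)$ depends only on $\gamma$ and $n$. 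Consequently the set $\{\xi\in A\colon \xi<\gamma \text{ and } T'_\xi\setminus T'_\gamma\sub n\}$ is contained in $\{\zeta<\gamma+1\colon S'_\zeta\setminus S'_{\gamma+1}\sub m\}$, which is finite because $(S'_\delta)_{\delta<\om_1}$, as a subtower of $(S_\alpha)$, satisfies (H) at level $\gamma+1$. This is exactly condition (H) for $(T'_\gamma)_{\gamma\in A}$, which is cofinal since an uncountable subset of $\om_1$ is unbounded. Hence $(T_\alpha)$ is Hausdorff.

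The only genuine obstacle is that one cannot imitate the proof of the gap‑equivalence lemma directly. There the two halves of a gap are independent towers, and one merely reindexes the equivalent gaps so that corresponding members are pairwise almost‑included; for a single tower, however, the ``right half'' is the literal complement, so almost‑inclusion in both directions would force the members of the two towers to be almost‑equal, which equivalent towers need not be. The interleaving together with the set‑difference chase above is the substitute for that reindexing, and the uniform bound $k$ on the lower errors (obtained by pigeonhole) is precisely what makes the transfer of (H) succeed; the upper error $c_\gamma$ causes no difficulty because the level $\gamma$ is held fixed inside condition (H).
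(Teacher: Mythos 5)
Your proof is correct and takes essentially the same route as the paper's: reduce to $(S_\alpha)_{\alpha<\om_1}$ satisfying (H), interleave the two towers into a sandwich $S'_\gamma \sub^* T'_\gamma \sub^* S'_{\gamma+1}$, uniformize the finite error $S'_\gamma\setminus T'_\gamma$ by a pigeonhole, and transfer (H) via the inclusion $S'_\xi\setminus S'_{\gamma+1}\sub \left(T'_\xi\setminus T'_\gamma\right)\cup\text{(finite)}$. The only differences are cosmetic: the paper builds the uniform bound $n$ into the sandwich from the outset and derives a contradiction at a single bad level $\beta$, whereas you first interleave with $\sub^*$, then pass to an uncountable $A$ and verify (H) directly at every level.
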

\begin{proof}
	We can suppose that $(S_\alpha)_{\alpha<\om_1}$ satisfies condition (H).
	There exist some $n < \om$ and cofinal subtowers $(T'_\alpha)_{\alpha<\om_1}$ and $(S'_\alpha)_{\alpha<\om_1}$ such that 
	$S'_\alpha \setminus n \sub T'_\alpha \sub^* S'_{\alpha+1}$ for each $\alpha < \om_1.$
	Suppose that $(T'_\alpha)_{\alpha<\om_1}$ does not satisfy (H). There is some $\beta < \om_1$ and $m < \om$ such that 
	$I = \{ \xi < \beta \colon T'_\xi \setminus T'_\beta\sub m\}$ is infinite. 
	Fix $k > \max(n,m)$ such that $T'_\beta \sub S'_{\beta+1} \cup k.$ 
	Now $I \sub \{\xi < \beta+1 \colon S'_\xi \setminus S'_{\beta+1}\sub k\}$ and this contradicts (H) of $(S'_\alpha)_{\alpha<\om_1}.$
\end{proof}

The property of being special tower is invariant under a slightly stronger equivalence relation:

\begin{prop} \label{special-similar}
Assume $\lambda$ is a cardinal with uncountable cofinality and $\mathcal{T} = \left(T_\alpha\right)_{\alpha< \lambda}$ is a special tower.
If $\mathcal{T}' = \left(T'_\alpha\right)_{\alpha<\lambda}$ is such that $T_\alpha =^* T'_\alpha$ for each $\alpha<\lambda$, then $\mathcal{T}'$ is special.
\end{prop}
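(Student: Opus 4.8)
The plan is to begin with a cofinal subtower of $\mathcal T$ witnessing speciality, thin it out by a pigeonhole argument so that the finite discrepancies between $\mathcal T$ and $\mathcal T'$ become uniform, and then transfer condition~(K) to $\mathcal T'$ along the thinned index set.

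First I would fix a cofinal $I \sub \lambda$ with $(T_\alpha)_{\alpha \in I}$ satisfying (K), so that $T_\alpha \setminus T_\beta \ne \emptyset$ whenever $\alpha < \beta$ lie in $I$. For each $\alpha \in I$, using $T_\alpha =^* T'_\alpha$, I pick $n_\alpha \in \om$ above the symmetric difference, so that $T_\alpha$ and $T'_\alpha$ agree on $[n_\alpha,\om)$. The assignment $\alpha \mapsto (n_\alpha,\, T_\alpha \cap n_\alpha)$ maps $I$ into the countable set $\{(n,S)\colon n \in \om,\ S \sub n\}$; since $\lambda$ is regular and uncountable, a cofinal subset of $\lambda$ cannot be covered by countably many bounded sets, so some fibre is cofinal. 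This yields a fixed $n \in \om$, a fixed $S \sub n$, and a cofinal $J \sub I$ with $n_\alpha = n$ and $T_\alpha \cap n = S$ for all $\alpha \in J$.

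Then I would verify that $(T'_\alpha)_{\alpha \in J}$ satisfies (K), which, being a cofinal subtower of $\mathcal T'$, makes $\mathcal T'$ special. Given $\alpha < \beta$ in $J$, the equality $T_\alpha \cap n = S = T_\beta \cap n$ forces the (nonempty) set $T_\alpha \setminus T_\beta$ to miss $n$ entirely, so $T_\alpha \setminus T_\beta \sub [n,\om)$. On $[n,\om)$ the primed and unprimed towers coincide, whence $(T'_\alpha \setminus T'_\beta) \cap [n,\om) = (T_\alpha \setminus T_\beta) \cap [n,\om) = T_\alpha \setminus T_\beta \ne \emptyset$, and therefore $T'_\alpha \nsubseteq T'_\beta$, as required.

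The subtle point, and the reason stabilising the cut-offs $n_\alpha$ alone does not suffice, is that a witness to $T_\alpha \setminus T_\beta \ne \emptyset$ might a priori sit below $n_\alpha$, exactly where $T_\alpha$ and $T'_\alpha$ are allowed to differ, and such a witness need not persist in $\mathcal T'$. Simultaneously freezing the initial segments $T_\alpha \cap n$ is what pushes every difference $T_\alpha \setminus T_\beta$ up into the tail on which the two towers agree; securing this second layer of the pigeonhole is the main thing to get right.
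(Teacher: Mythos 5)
Your proof is correct and takes essentially the same approach as the paper's: fix a cofinal subtower witnessing condition (K), thin it by a pigeonhole argument (using regularity and uncountability of $\lambda$) so the finite discrepancies between $\mathcal T$ and $\mathcal T'$ become uniform, and transfer the antichain condition along the thinned index set. If anything, your second stabilization --- freezing the initial segments $T_\alpha \cap n$ as well as the cut-off $n$ --- makes fully explicit the step the paper compresses into ``clearly'' after stabilizing only the symmetric difference $T'_\alpha \bigtriangleup T_\alpha$; as you correctly observe, without pushing every witness of $T_\alpha \setminus T_\beta \ne \emptyset$ above the region where the two towers may differ, such a witness could a priori be destroyed by the modification.
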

\begin{proof}
	There is $X\sub \lambda$ cofinal in $\lambda$ such that $(T_\alpha)_{\alpha\in X}$ is a $\sub$-antichain. We can find $X'\sub X$ cofinal in $\lambda$ such that 
both	$T'_\alpha \setminus T_\alpha$ and $T_\alpha \setminus T'_\alpha$ are constant for every $\alpha\in X'$. Clearly,
	$T'_\alpha \nsubseteq T'_\beta$ for every $\alpha<\beta$, $\alpha$, $\beta\in X'$.
\end{proof}

We finish this section by a comment on Proposition \ref{half-is-Hausdorff}.

\begin{remark} \label{maximal-Hausdorff}
We are not aware of any way of constructing (in $\mathsf{ZFC}$) a Hausdorff tower without producing a Hausdorff gap 
(i.e.\ without implicitly constructing the other half of the gap). However, there are several generic
examples of Hausdorff towers which are not halves of Hausdorff gaps. 
E.g.\ in every model obtained by forcing with a Suslin tree, there is a Hausdorff tower which is maximal. 
Let $\mathbb{S}$ be a Suslin tree. Define $\varphi\colon \mathbb{S} \to \mathcal{P}(\om)$ in such a way that  
\begin{itemize}
	\item[(1)] $\varphi(s)\cap \varphi(t) =^* \emptyset$ for each incompatible $s$, $t\in \mathbb{S}$;
	\item[(2)] $\varphi(s) \sub^* \varphi(t)$ if $t\leq s$;
	\item[(3)] if $S$ is a branch of the tree, then $\left\{\varphi(s)^c\colon s\in S\right\}$ satisfies condition (H).
\end{itemize}
Such $\varphi$ can be constructed by induction on levels of $\mathbb{S}$, using the fact that all branches of $\mathbb{S}$ are countable and a simplifying assumption that $\mathbb{S}$ does not split at limit levels. 
Having such $\varphi$, we can see that the $\mathbb{S}$-generic branch through $\varphi''[\mathbb{S}]$ is a tower which is maximal (in principle because $\mathbb{S}$ does not add new subsets of $\om$, see e.g.\ \cite[Lemma 2]{farah-towers}) and Hausdorff. 

This provides another example of a family generating a dense ideal 
which does not realize oscillation $1$ (cf. \cite[Example 1]{oscillations} and Section~\ref{suslin-towers} of this paper).

\end{remark}

\section{Special towers}\label{special-towers}

We already know that special towers do exist in $\mathsf{ZFC}$. We will see that consistently there are no other towers of length $\om_1$.
The simplest way to see this is to use $\mathsf{OCA}$. For the formulation of $\mathsf{OCA}$ see e.g.\ \cite{todorcevic}; we will only use the following consequence of $\mathsf{OCA}$ (see \cite[Proposition 8.4]{todorcevic}):

\begin{prop}[Todor{\v{c}}evi{\'c}]
Under $\mathsf{OCA}$ every uncountable subset of $\mathcal{P}(\om)$ contains an uncountable $\sub$-chain or $\sub$-antichain.
\end{prop}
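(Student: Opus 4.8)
The plan is to derive this dichotomy as a single application of $\mathsf{OCA}$ with the natural ``incomparability'' coloring. First I would identify $\pom$ with the Cantor space $2^\om$, so that it carries a separable metric topology whose basic clopen sets are determined by the membership status of finitely many integers. Given an uncountable $A \sub \pom$, I would regard $A$ as a separable metric space in the induced topology and partition $[A]^2 = K_0 \cup K_1$, where
\[ K_0 = \left\{ \{a,b\} \in [A]^2 \colon a \nsubseteq b \text{ and } b \nsubseteq a \right\} \]
collects the $\sub$-incomparable pairs and $K_1$ the $\sub$-comparable ones.

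The crux is to check that $K_0$ is open, since this is the only hypothesis $\mathsf{OCA}$ imposes on the coloring. Given $\{a,b\} \in K_0$, I would fix witnesses $n \in a \sm b$ and $m \in b \sm a$ (note $n \ne m$) and consider the basic clopen sets $U = \{x \colon n \in x,\ m \notin x\} \ni a$ and $V = \{y \colon n \notin y,\ m \in y\} \ni b$. These are disjoint, and every pair $\{a', b'\}$ with $a' \in U$, $b' \in V$ again has $n$ and $m$ as witnesses of incomparability, hence lies in $K_0$; this is exactly the openness of $K_0$. Since incomparability is witnessed by finite information, this step is genuinely routine, and I expect it to be the only (minor) obstacle.

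With openness in hand, $\mathsf{OCA}$ returns one of two alternatives. In the first, there is an uncountable $K_0$-homogeneous $B \sub A$; then any two distinct members of $B$ are $\sub$-incomparable, so $B$ is the desired uncountable antichain. In the second, $A = \bigcup_{n<\om} A_n$ with $[A_n]^2 \sub K_1$ for every $n$; each $A_n$ is then linearly ordered by $\sub$, i.e.\ a chain, and as $A$ is uncountable some $A_n$ must be uncountable, yielding an uncountable chain. This closes both cases.
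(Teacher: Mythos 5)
Your proof is correct and is exactly the standard argument: the paper itself gives no proof, quoting the result from \cite[Proposition 8.4]{todorcevic}, and your verification that the incomparability coloring is open (two witnessing integers give disjoint basic clopen neighborhoods all of whose cross pairs remain incomparable) together with the two $\mathsf{OCA}$ alternatives — an uncountable $K_0$-homogeneous set being an antichain, and a countable union of pairwise-comparable (hence linearly ordered) pieces forcing one uncountable chain — is precisely how this consequence is derived. Nothing to fix.
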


A tower is well-ordered by $\sub^*$, so it cannot contain an uncountable chain. Hence the following holds:

\begin{prop} \label{OCA}
Under $\mathsf{OCA}$ every $\om_1$-tower is special.
\end{prop}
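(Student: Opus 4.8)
The plan is to apply the Todor{\v{c}}evi{\'c} dichotomy (the preceding proposition) to the tower viewed as an uncountable subset of $\mathcal{P}(\om)$, and then rule out the ``chain'' alternative using the order-theoretic structure of towers. First I would fix an $\om_1$-tower $(T_\alpha)_{\alpha<\om_1}$ and simply regard $\{T_\alpha \colon \alpha<\om_1\}$ as an uncountable subset of $\mathcal{P}(\om)$. Since the $T_\alpha$ are pairwise distinct (they are strictly $\sub^*$-increasing, so $T_\alpha \setminus T_\beta$ is infinite for $\alpha>\beta$ and hence $T_\alpha \ne T_\beta$), this is a genuinely uncountable set, and the dichotomy supplies either an uncountable $\sub$-chain or an uncountable $\sub$-antichain among the $T_\alpha$.

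The key observation, already recorded in the text just before the statement, is that a tower is well-ordered by $\sub^*$ and therefore cannot contain an uncountable $\sub$-chain. I would make this explicit: suppose $\{T_\alpha \colon \alpha \in C\}$ were an uncountable $\sub$-chain for some uncountable $C \sub \om_1$. On $C$ the relation $\sub$ linearly orders the sets, but the tower also linearly orders them by $\sub^*$, and for distinct indices $\alpha,\beta \in C$ we have $T_\alpha \sub T_\beta \Rightarrow T_\alpha \sub^* T_\beta \Rightarrow \alpha \le \beta$. Thus the $\sub$-order on $\{T_\alpha \colon \alpha \in C\}$ agrees with the ordinal order on $C$, so it is a well-order of order type $\ge \om_1$; but any $\sub$-chain of subsets of $\om$ whose order matches a strictly increasing $\om_1$-sequence would force a strictly increasing $\om_1$-sequence under genuine inclusion, which is impossible since each step $T_\alpha \subsetneq T_\beta$ with $T_\beta \setminus T_\alpha$ \emph{finite} (because $\alpha<\beta$ gives $T_\alpha \sub^* T_\beta$, while $\sub$ plus the tower ordering forces $T_\beta \setminus T_\alpha$ finite) cannot be repeated uncountably often with $\sub$-inclusion strictly increasing. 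Concretely, along an uncountable $\sub$-chain the finite sets $T_\beta \setminus T_\alpha$ must be nonempty and nested, producing a strictly increasing $\om_1$-sequence of finite subsets of $\om$, a contradiction.

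It follows that the dichotomy must yield the other alternative: an uncountable $\sub$-antichain $\{T_\alpha \colon \alpha \in A\}$, $A \in [\om_1]^{\om_1}$. This $A$ is automatically cofinal in $\om_1$ (any uncountable subset of $\om_1$ is), so $(T_\alpha)_{\alpha \in A}$ is a cofinal subtower, and being a $\sub$-antichain means exactly $T_\alpha \nsubseteq T_\beta$ for all distinct $\alpha,\beta \in A$, i.e.\ it satisfies condition (K). By the definition of special tower this shows $(T_\alpha)_{\alpha<\om_1}$ is special.

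I expect the only subtle point—the ``main obstacle''—to be the clean justification that no uncountable $\sub$-chain can occur; everything else is immediate from the dichotomy and the definitions. The cleanest phrasing is the one the authors already hint at: a tower is well-ordered by $\sub^*$ and two sets comparable under the strict order $\subsetneq$ are in particular $\sub^*$-comparable, so an uncountable $\sub$-chain inside the tower would be $\sub^*$-well-ordered in a way compatible with strict inclusion, forcing an uncountable strictly $\sub$-increasing sequence of subsets of $\om$ whose successive differences are finite and nonempty—an impossibility, since this builds an injection of $\om_1$ into $[\om]^{<\om}$.
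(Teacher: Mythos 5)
Your overall route is exactly the paper's: invoke the Todor\v{c}evi\'c dichotomy for uncountable subsets of $\mathcal{P}(\om)$, rule out the uncountable $\sub$-chain, and read condition (K) off the uncountable antichain (which, being uncountable in $\om_1$, is cofinal). The first and last paragraphs are fine. The problem is in your justification of the no-chain step, where you assert that along a putative uncountable $\sub$-chain each step $T_\alpha \subsetneq T_\beta$ ($\alpha<\beta$) has $T_\beta \setminus T_\alpha$ \emph{finite}, ``because $\alpha<\beta$ gives $T_\alpha \sub^* T_\beta$, while $\sub$ plus the tower ordering forces $T_\beta \setminus T_\alpha$ finite.'' This is false, and in fact contradicts the paper's definition of a tower: $T_\alpha \setminus T_\beta$ is finite \emph{if and only if} $\alpha \le \beta$, so for $\alpha < \beta$ the difference $T_\beta \setminus T_\alpha$ is necessarily \emph{infinite} (that is precisely why $T_\beta \sub T_\alpha$ cannot occur for $\alpha<\beta$). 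Consequently your closing contradiction --- ``nested finite differences'' giving an injection of $\om_1$ into $[\om]^{<\om}$ --- is built on a false premise and collapses; no such family of finite sets exists.

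The repair is short and is the fact the paper uses implicitly (and explicitly at the end of the proof of Lemma~\ref{single_pairs}): a strictly $\sub$-increasing $\om_1$-sequence of subsets of $\om$ is impossible, with no finiteness needed. Your own first observation already shows that on an uncountable $C\sub\om_1$ a $\sub$-chain must agree with the ordinal order, i.e.\ $T_\alpha \subsetneq T_\beta$ for $\alpha<\beta$ in $C$ (since $T_\beta\sub T_\alpha$ would make $T_\beta\setminus T_\alpha$ empty, hence finite, forcing $\beta\le\alpha$). Now for each $\alpha\in C$ pick $x_\alpha \in T_{\alpha^+}\setminus T_\alpha$, where $\alpha^+$ is the successor of $\alpha$ in $C$. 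For $\alpha<\beta$ in $C$ we have $x_\alpha \in T_{\alpha^+} \sub T_\beta$ while $x_\beta \notin T_\beta$, so the $x_\alpha$ are pairwise distinct, giving an injection of the uncountable set $C$ into $\om$ --- a contradiction. With this substituted for your second paragraph, the proof matches the paper's argument, which disposes of the chain alternative in one line (``a tower is well-ordered by $\sub^*$, so it cannot contain an uncountable chain'').
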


It is unclear for us whether $\mathsf{OCA}$ implies that all towers of length $\om_1$ are Hausdorff. However,
this is true if we assume $\mathsf{MA}_{\om_1}$. 

\begin{lem}\label{single_pairs}
 Let $(A_\alpha, B_\alpha)_{\alpha < \om_1}$ be a sequence such that $A_\alpha \sub B_\alpha \sub^* A_\beta \sub \om$ 
 and $A_{\alpha+1} \setminus B_\alpha$ is infinite for each for $\alpha < \om_1$.
 There exist $\xi < \zeta < \om_1$ such that $A_\xi \nsubseteq B_\zeta$.
\end{lem}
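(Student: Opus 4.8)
The plan is to argue by contradiction: suppose that $A_\xi \sub B_\zeta$ (genuine inclusion, not merely mod finite) for every pair $\xi < \zeta < \om_1$, and derive an impossible configuration in $\mathcal{P}(\om)$. The whole point is that under this assumption the hypotheses force the appearance of uncountably many pairwise disjoint nonempty subsets of $\om$, which cannot exist.

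Concretely, for each $\alpha < \om_1$ set $D_\alpha = A_{\alpha+1}\setminus B_\alpha$; by hypothesis each $D_\alpha$ is infinite, in particular nonempty. The key step is to show that the family $(D_\alpha)_{\alpha < \om_1}$ is pairwise disjoint. Fix $\alpha < \beta$. First I would record that $A_{\alpha+1}\sub B_\beta$: if $\alpha+1 = \beta$ this is just the hypothesis $A_\beta \sub B_\beta$, and if $\alpha+1 < \beta$ it is exactly an instance of the contradiction assumption with $\xi = \alpha+1$ and $\zeta = \beta$. Consequently $D_\alpha \sub A_{\alpha+1}\sub B_\beta$, while every element of $D_\beta$ lies outside $B_\beta$ by definition; hence $D_\alpha \cap D_\beta = \emptyset$.

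Finally, choosing one point $d_\alpha \in D_\alpha$ for each $\alpha$ yields an injection $\alpha \mapsto d_\alpha$ from $\om_1$ into $\om$, which is absurd. This contradiction shows that the inclusion $A_\xi \sub B_\zeta$ cannot hold for all pairs, i.e.\ there exist $\xi < \zeta$ with $A_\xi \nsubseteq B_\zeta$.

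I do not expect a serious obstacle here: the argument is elementary once one isolates the disjointness claim, and it uses only the genuine inclusions $A_\alpha \sub B_\alpha$ together with the nonemptiness of $A_{\alpha+1}\setminus B_\alpha$ (the $\sub^*$-interleaving $B_\alpha \sub^* A_\beta$ is not needed for this direction). The one point to state carefully is the split into the cases $\alpha+1 = \beta$ and $\alpha+1 < \beta$, so that the contradiction hypothesis is invoked only for genuine pairs $\xi < \zeta$.
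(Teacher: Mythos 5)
Your proof is correct and is essentially the paper's own argument: the paper likewise assumes toward contradiction that $A_\alpha \sub B_\beta$ for all $\alpha \leq \beta$ and derives an injection of $\om_1$ into $\om$, working with the cumulative unions $C_\alpha = \bigcup_{\xi \leq \alpha} A_\xi$, which form a strictly increasing $\subseteq$-chain of type $\om_1$ whose successive increments $C_{\alpha+1}\setminus C_\alpha$ contain exactly your pairwise disjoint sets $D_\alpha = A_{\alpha+1}\setminus B_\alpha$. Your direct disjointness formulation (including the careful split into the cases $\alpha+1=\beta$ and $\alpha+1<\beta$, and the observation that only nonemptiness of $A_{\alpha+1}\setminus B_\alpha$ is needed) is the same combinatorial content, merely without the intermediate chain.
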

\begin{proof}
Suppose $A_\alpha \subseteq B_\beta$ for each $\alpha \leq \beta < \om_1$.
Put $C_\alpha = \bigcup \left\{A_\xi \colon \xi \leq \alpha\right\}$. 
We have $C_\alpha \sub B_\alpha$, and since $A_{\alpha+1} \setminus B_\alpha$ is infinite, $C_\alpha \neq C_{\alpha+1}$ 
for each $\alpha < \om_1.$ Thus $\left(C_\alpha\right)_{\alpha < \om_1}$ is an increasing $\subseteq$-chain of type $\om_1,$ a contradiction.  
\end{proof}

\begin{prop}\label{cccHausdorff}
	Let $\mathcal{T} = (T_\alpha)_{\alpha<\om_1}$ be a tower.
	There exists a ccc forcing making $\mathcal T$ Hausdorff in the extension.
\end{prop}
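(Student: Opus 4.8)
The plan is to force with finite approximations to a cofinal set $X\sub\om_1$ for which the subtower $(T_\alpha)_{\alpha\in X}$ satisfies condition (H), and then to check that this forcing is ccc. I would let $\poset$ consist of pairs $p=(a_p,n_p)$ with $a_p\in[\om_1]^{<\om}$ and $n_p\in\om$, with no further restriction on conditions, ordered by declaring $q\leq p$ iff $a_q\supseteq a_p$, $n_q\geq n_p$, and
\[ T_\xi\setminus T_\alpha\nsubseteq n_p \quad\text{for all }\xi\in a_q\setminus a_p\text{ and }\alpha\in a_p\text{ with }\xi<\alpha. \]
This is transitive because $\nsubseteq n_q$ implies $\nsubseteq n_p$ when $n_q\geq n_p$. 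The generic object is $X=\bigcup\{a_p\colon p\in G\}$.

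The density facts are routine. For $\gamma<\om_1$ the conditions with $a_p\nsubseteq\gamma$ are dense (append an ordinal above $a_p$, which triggers no instance of the third clause), so $X$ is cofinal, hence uncountable. Moreover, for fixed $\alpha$ and $n$ the conditions with $\alpha\in a_p$ and $n_p>n$ are dense below any condition containing $\alpha$, since raising the threshold is always permitted (the third clause is then vacuous). These two facts already yield condition (H): given $\alpha\in X$ and $n\in\om$, pick $p\in G$ with $\alpha\in a_p$ and $n_p>n$; any $\xi\in X\cap\alpha$ with $\xi\notin a_p$ enters through some $q\leq p$ in $G$, so $T_\xi\setminus T_\alpha\nsubseteq n_p$ and a fortiori $T_\xi\setminus T_\alpha\nsubseteq n$. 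Hence $\{\xi\in X\cap\alpha\colon T_\xi\setminus T_\alpha\sub n\}\sub a_p$ is finite, and $(T_\alpha)_{\alpha\in X}$ is an (H)-subtower, so $\mathcal T$ becomes Hausdorff.

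The crux — and the only real obstacle — is verifying that $\poset$ is ccc, and here Lemma \ref{single_pairs} is exactly the tool I would invoke. Starting from uncountably many conditions, I would thin out using the $\Delta$-system lemma and pigeonhole to an uncountable family with a common root $r$, a common threshold $n$, a common tail size $k$, and pairwise disjoint tails, and then by a recursion arrange the tails to be increasing and to lie above $\sup r$. Re-indexing by $\om_1$ and writing the tail of the $\eta$-th condition as $\xi^0_\eta<\dots<\xi^{k-1}_\eta$, I would set $A_\eta=\bigcap_{i<k}T_{\xi^i_\eta}\setminus n$ and $B_\eta=\bigcup_{j<k}T_{\xi^j_\eta}$. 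Because the tails increase, one checks $A_\eta\sub B_\eta$, that $B_\eta\sub^* A_\zeta$ for $\eta<\zeta$ (as $\xi^{k-1}_\eta<\xi^0_\zeta$), and that $A_{\eta+1}\setminus B_\eta$ is infinite (larger-minus-smaller in a tower). Thus Lemma \ref{single_pairs} provides $\eta<\zeta$ with $A_\eta\nsubseteq B_\zeta$.

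Finally I would cash this non-inclusion in: any $m\in A_\eta\setminus B_\zeta$ satisfies $m\geq n$, $m\in T_{\xi^i_\eta}$ for all $i$, and $m\notin T_{\xi^j_\zeta}$ for all $j$, whence $T_{\xi^i_\eta}\setminus T_{\xi^j_\zeta}\nsubseteq n$ for every cross pair. Consequently $q=(a_{p_\eta}\cup a_{p_\zeta},n)$ is a common extension: $q\leq p_\eta$ holds vacuously, since the new ordinals (the tail of $\zeta$) lie above $a_{p_\eta}$, while $q\leq p_\zeta$ holds precisely because no cross pair violates the third clause (elements of $r$ are below the tail of $\eta$ and so are irrelevant). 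So the two conditions are compatible, no uncountable antichain exists, and $\poset$ is ccc; being ccc it preserves $\om_1$, and by the density argument above $\mathcal T$ is Hausdorff in the extension. The part I expect to require the most care is the reduction to increasing, size-$k$ tails and the exact bookkeeping of which pairs $(\xi,\alpha)$ the third clause constrains, but once the sets $A_\eta,B_\eta$ are set up, Lemma \ref{single_pairs} does the decisive work.
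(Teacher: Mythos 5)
Your proposal is correct and takes essentially the same route as the paper's own proof: the identical poset of pairs $(F_p,n_p)$ with the identical ordering, the same density arguments yielding a cofinal (H)-subtower, and the same ccc verification via a $\Delta$-system reduction followed by an application of Lemma~\ref{single_pairs} to $A_\eta=\bigcap\{T_\xi\colon\xi\in F'_\eta\}\setminus n$ and $B_\eta=\bigcup\{T_\xi\colon\xi\in F'_\eta\}$. The only (immaterial) difference is that the paper also subtracts $n$ from $B_\eta$, which changes nothing since any witness $m\in A_\eta\setminus B_\zeta$ already satisfies $m\geq n$.
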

\begin{proof}
	A condition in the desired forcing is a pair $p = (F_p, n_p) \in [\om_1]^{<\om} \times \om.$ A condition $q$ is stronger than $p$ if $F_p \sub F_q$, $n_p \leq n_q$, and for each $\alpha < \beta$, $\alpha \in F_q\setminus F_p,$ $\beta \in F_p$ there exists some $m \in (T_\alpha \setminus T_\beta) \setminus n_p.$ 
	For each  condition $p$ and each ordinal $\alpha < \om_1,$ $F_p < \alpha,$ 
	the pair $\langle F_p \cup \{\alpha\}, n \rangle$ (where $n>n_p$) is a condition stronger than $p$, and thus this forcing adds a subtower cofinal in $\mathcal T$ which fulfills condition (H) (provided that $\om_1$ is preserved).
	
	To prove ccc, let $\left\{p_\alpha = \langle F_\alpha, n_\alpha \rangle \colon \alpha < \om_1\right\}$ be a set of conditions. 
	We can suppose that $n_\alpha = n$ for each $\alpha < \om_1$ and that $\{F_\alpha \colon \alpha < \om_1\}$ forms a
	$\Delta$-system with core $F$. Denote $F'_\alpha = F_\alpha \setminus F$. Assume without loss of generality that $\max F < \min F'_\alpha < \max F'_\alpha < \min F'_\beta$ if $\alpha<\beta$.
	
	For $\alpha < \om_1$ put $A_\alpha = \bigcap \left\{T_\xi \colon \xi \in F'_\alpha \right\} \setminus n$ 
	and $B_\alpha = \bigcup \left\{T_\xi \colon \xi \in F'_\alpha \right\} \setminus n$.
	Lemma~\ref{single_pairs} shows that there are $\alpha < \beta < \om_1$ such that $A_\alpha \not\sub B_\beta.$ Note that $\langle F_\alpha \cup F_\beta, n\rangle$ is a condition stronger than both $p_\alpha$ and $p_\beta.$
\end{proof}

\begin{cor}\label{MA-Hausdorff}
	($\mathsf{MA_{\om_1}}$) Every tower of length $\om_1$ is Hausdorff.
\end{cor}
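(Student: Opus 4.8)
The plan is to derive Corollary~\ref{MA-Hausdorff} directly from Proposition~\ref{cccHausdorff} together with the standard application of Martin's Axiom. By Proposition~\ref{cccHausdorff}, for any tower $\mathcal{T} = (T_\alpha)_{\alpha<\om_1}$ there is a ccc forcing $\poset$ whose generic object yields a cofinal subtower of $\mathcal{T}$ satisfying condition (H). Since a tower is Hausdorff precisely when it contains a subtower with condition (H), it suffices to show that under $\MA(\om_1)$ one does not actually need to pass to a generic extension: the relevant dense sets can be met in the ground model by a single filter.

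First I would identify the right family of dense sets. For each $\alpha<\om_1$, let $D_\alpha = \{ p \in \poset : \max F_p \geq \alpha \}$; the density argument in the proof of Proposition~\ref{cccHausdorff} (extending any condition $p$ by some $\alpha' > \max(F_p,\alpha)$ with a larger $n$) shows each $D_\alpha$ is dense. This is a collection of $\om_1$ dense subsets of the ccc poset $\poset$. By $\MA(\om_1)$ there is a filter $G \sub \poset$ meeting every $D_\alpha$. Then I would set $F = \bigcup\{F_p : p \in G\}$ and argue, as in the forcing reading, that $F$ is unbounded in $\om_1$ (because $G$ meets each $D_\alpha$) and that the subtower $(T_\alpha)_{\alpha \in F}$ satisfies condition (H).

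The verification of condition (H) for $(T_\alpha)_{\alpha \in F}$ is where I would be most careful, though it is really just the unwinding of the forcing order. Fix $\beta \in F$ and $n \in \om$; I want $\{ \xi \in F \cap \beta : T_\xi \setminus T_\beta \sub n \}$ to be finite. Choose $p \in G$ with $\beta \in F_p$ and with $n_p \geq n$ (compatibility inside the filter lets me assume a single condition controls both). For any $\xi \in F$ with $\xi < \beta$ that enters $F$ via some $q \in G$ stronger than $p$ with $\xi \in F_q \setminus F_p$, the definition of the order forces some $m \in (T_\xi \setminus T_\beta)\setminus n_p \sub (T_\xi \setminus T_\beta)\setminus n$, so $T_\xi \setminus T_\beta \nsubseteq n$. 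The only $\xi$ that could violate this are those already in $F_p$, of which there are finitely many. This gives condition (H).

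The main obstacle, and the one genuine point to check, is that $\MA(\om_1)$ applies: $\poset$ must be ccc, which is exactly the content of Proposition~\ref{cccHausdorff}, and one must produce a genuinely unbounded $F$ rather than a bounded one. Unboundedness is guaranteed by the dense sets $D_\alpha$, but I would note that the filter meeting $\om_1$ dense sets need not be generic, so I cannot simply quote the forcing theorem; instead the combinatorial extraction of condition (H) from the filter $G$ must be done by hand as above. Once unboundedness and condition (H) are in place, $(T_\alpha)_{\alpha \in F}$ is the desired subtower and $\mathcal{T}$ is Hausdorff, completing the corollary.
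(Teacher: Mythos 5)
Your route is exactly the intended one: the paper gives no separate proof of Corollary \ref{MA-Hausdorff}, deriving it immediately from Proposition \ref{cccHausdorff} by the standard $\mathsf{MA}(\om_1)$ argument, and your hand-verification of condition (H) from a filter (rather than a generic) is the right way to make that derivation explicit. One concrete repair is needed, however. Your family of dense sets consists only of the $D_\alpha = \{p \colon \max F_p \geq \alpha\}$, but a filter $G$ meeting only these need not contain conditions with large second coordinate: the extension adding a new top ordinal to $F_p$ can leave $n_p$ unchanged, so $G$ could consist entirely of conditions with $n_p = 0$. Then your step ``choose $p \in G$ with $\beta \in F_p$ and $n_p \geq n$'' fails, and the parenthetical appeal to compatibility does not rescue it --- directedness of the filter only produces a common extension of two conditions \emph{already in} $G$, and cannot force $G$ to contain any condition with $n_p \geq n$ in the first place. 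Without that condition, a $\xi$ entering below $\beta$ is only guaranteed a witness $m \in (T_\xi \setminus T_\beta) \setminus n_p$, which may lie below $n$, so the set $\{\xi \in F \cap \beta \colon T_\xi \setminus T_\beta \sub n\}$ is not visibly controlled by the finite set $F_p$.

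The fix is immediate and stays entirely within your argument: add to the family the sets $E_n = \{p \in \mathbb{P} \colon n_p \geq n\}$ for $n < \om$ (or $E_{\beta,n} = \{p \colon \beta \in F_p,\ n_p \geq n\}$ for $\beta < \om_1$, $n < \om$). Each $E_n$ is dense, since $(F_p, \max(n_p,n))$ extends $p$ with the witness clause vacuous, and this adds only countably many (respectively $\om_1$-many) dense sets, well within the scope of $\mathsf{MA}(\om_1)$. With that in place the rest of your verification is sound: for $\xi \in F \cap \beta$ with $\xi \notin F_p$, directedness of $G$ gives $r \in G$ with $r \leq p$ and $\xi \in F_r \setminus F_p$ (here one should also note that the ordering is transitive, since witnesses for a two-step extension exist above the intermediate $n_q \geq n_p$), and the definition of the order yields $m \in (T_\xi \setminus T_\beta) \setminus n_p$ with $n_p \geq n$, so $\{\xi \in F \cap \beta \colon T_\xi \setminus T_\beta \sub n\} \sub F_p \cap \beta$ is finite, as you claim.
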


Since there are no Hausdorff towers of length greater than $\om_1$, this result does not generalize to higher cardinals. However, the following is still true.

\begin{thm}\label{ccc-special}
	Let $\kappa$ be a regular uncountable cardinal and let $\mathcal T = (T_\alpha)_{\alpha< \kappa}$ be a tower.
	There is a ccc forcing making $\mathcal T$ special in the extension.
\end{thm}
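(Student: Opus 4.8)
The plan is to force with finite approximations to a cofinal $\sub$-antichain, i.e.\ with the poset $\poset$ whose conditions are finite sets $F\sub\kappa$ satisfying $T_\alpha\nsubseteq T_\beta$ for all $\alpha<\beta$ in $F$, ordered by reverse inclusion. If the generic set $X=\bigcup_{p\in G}F_p$ is cofinal in $\kappa$, then, since a condition-(K) witness $m\in T_\alpha\sm T_\beta$ persists once found, $(T_\alpha)_{\alpha\in X}$ is a cofinal subtower satisfying condition (K), and so $\mathcal T$ becomes special. Two things must then be checked: that $X$ is cofinal (density) and that $\poset$ is ccc.

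The main obstacle is cofinality, because a ``sink'' — an index $\alpha$ with $T_\alpha\sub T_\beta$ for cofinally many $\beta$ — blocks extending a condition upward. First I would remove sinks by replacing $\mathcal T$ with a reduced tower. Let $C^*=\{n\in\om\colon n\in T_\beta \text{ for all sufficiently large } \beta\}$ and put $T'_\beta=T_\beta\sm C^*$. A condition-(K) antichain for $(T'_\beta)$ lifts to one for $(T_\beta)$, since a witness $m\in T'_\alpha\sm T'_\beta$ lies in $T_\alpha\sm T_\beta$; so it suffices to specialize $(T'_\beta)$. By the definition of $C^*$ one has $\bigcap_{\beta\ge\delta}T'_\beta=\emptyset$ for every $\delta<\kappa$. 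Moreover, after passing to a cofinal subtower, $(T'_\beta)$ is again a genuine $\kappa$-tower: otherwise its successive differences would be finite on a cofinal set, forcing the sets $T_\beta\cap C^*$ to be genuinely $\sub$-increasing along a cofinal set and yielding an uncountable $\sub$-chain in $\cP(\om)$, which is impossible. Thus I may assume from now on that $\bigcap_{\beta\ge\delta}T_\beta=\emptyset$ for all $\delta$.

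For density on this reduced tower I claim that for every condition $F$ and every $\gamma$ there is $\beta>\gamma$ with $T_\alpha\nsubseteq T_\beta$ for all $\alpha\in F$. If this failed, then for $\gamma\ge\max F$ every $\beta>\gamma$ would satisfy $T_\alpha\sub T_\beta$ for some $\alpha\in F$, whence $T_\beta\supseteq\bigcap_{\alpha\in F}T_\alpha$. Since $F$ is finite and the tower is $\sub^*$-linearly ordered, $\bigcap_{\alpha\in F}T_\alpha=^*T_{\min F}$, which is infinite. This infinite set would then be contained in $\bigcap_{\beta>\gamma}T_\beta$, contradicting the emptiness of tail intersections. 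Hence the sets $D_\gamma=\{p\colon \max F_p>\gamma\}$ are dense and $X$ is cofinal.

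Finally, ccc is proved exactly as in Proposition \ref{cccHausdorff}. Given $\om_1$ conditions, refine to a $\Delta$-system with core $F$ and increasing tails $F'_\alpha$ (so that $\max F'_\alpha<\min F'_\beta$ for $\alpha<\beta$), and apply Lemma \ref{single_pairs} to $A_\alpha=\bigcap_{\xi\in F'_\alpha}T_\xi$ and $B_\alpha=\bigcup_{\xi\in F'_\alpha}T_\xi$. Its hypotheses hold: $A_\alpha\sub B_\alpha$, $B_\alpha\sub^*A_\beta$ for $\alpha<\beta$, and $A_{\alpha+1}\sm B_\alpha=^*T_{\min F'_{\alpha+1}}\sm T_{\max F'_\alpha}$ is infinite by the tower property. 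The conclusion gives $\xi<\zeta$ with $A_\xi\nsubseteq B_\zeta$, i.e.\ a single $m\in A_\xi\sm B_\zeta$ lying in $T_s\sm T_t$ for all $s\in F'_\xi$ and $t\in F'_\zeta$; therefore $F_\xi\cup F_\zeta$ is a condition and $p_\xi,p_\zeta$ are compatible. The hard part of the whole argument is the cofinality step, which is precisely where the reduction to a sink-free tower is needed.
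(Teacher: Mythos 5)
Your proposal is correct, but it takes a genuinely different route from the paper's. The paper does not force with the naive antichain poset at all: it proves the stronger Theorem \ref{MA-similar} --- a ccc poset whose conditions $(F_p,a_p,r_p)$ generically add one point to and remove one point from each level, so that the \emph{modified} tower satisfies (K) outright --- and then derives Theorem \ref{ccc-special} from Proposition \ref{special-similar} (invariance of specialness under levelwise $=^*$-modification). The reason for that detour is exactly the density obstruction you isolate: a ``sink'' $\alpha$ with $T_\alpha \sub T_\beta$ for all large $\beta$ blocks extension of the naive conditions, and the paper escapes by letting the generic plant a fresh point in $T_\alpha$ (via $a_p$) and delete one from lower levels (via $r_p$). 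You instead repair the naive poset itself, by excising the eventual core $C^*$ once and for all; your ccc argument then needs only Lemma \ref{single_pairs} via the block-separation trick of Proposition \ref{cccHausdorff}, whereas the paper's bookkeeping forces it through the multi-tower Lemma \ref{multi_towers}. Your version is arguably more elementary, and it produces a cofinal (K)-subtower of $\mathcal T$ itself rather than of an $=^*$-modified companion.

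One step needs patching. After proving $\bigcap_{\beta\geq\delta}T'_\beta=\emptyset$ for all $\delta$, you pass to a cofinal subtower to restore strict $\sub^*$-increase, and then continue to use empty tail intersections --- but an intersection over a sparse cofinal set is \emph{larger} than over a full tail, so the property is not automatically inherited: a point $n\notin C^*$ may avoid the $T'_\beta$ only at levels outside your chosen subtower, and then your density argument, run inside the subtower, no longer reaches a contradiction. There is also a small slip in the towerness claim itself: the sets $T_\beta\cap C^*$ are only $\sub^*$-increasing, not genuinely $\sub$-increasing; to get the forbidden well-ordered $\sub$-chain one must pass to the cumulative unions, as in the proof of Lemma \ref{single_pairs}. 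Both issues evaporate with a single observation: $C^*$ is countable and $\kappa$ is regular uncountable, so taking $\delta_n$ witnessing $n\in C^*$ and $\delta^*=\sup\{\delta_n\colon n\in C^*\}<\kappa$, we get $C^*\sub T_\beta$ for all $\beta\geq\delta^*$. On this tail $T_{\beta'}\setminus T_\beta=T'_{\beta'}\setminus T'_\beta$, so $(T'_\beta)_{\beta\geq\delta^*}$ is automatically a genuine tower with infinite levels, no sparse subtower is needed, and restriction to a tail trivially preserves the emptiness of tail intersections. With that replacement your argument is complete.
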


Theorem \ref{ccc-special} can be proved in a similar way to \cite[Theorem 1.4]{KunenDouwen}, see also \cite[Theorem 4.4]{Weiss}. 
The forcing consists simply of finite subsets $F$ of $\kappa \setminus \gamma$ (for a suitably chosen $\gamma \in \kappa$) such that $T_\alpha \nsubseteq T_\beta$ if $\alpha\ne \beta$, and $\alpha$, $\beta\in F$. This forcing is ccc (checking it needs some work but it is not difficult). Therefore there is $\gamma < \kappa$ such that for any condition $F\subseteq \kappa \setminus \gamma$ there are cofinally many $\beta$ such that $F\cup \{\beta\}$ is a condition (otherwise we could construct an uncountable set of pairwise incompatible conditions). Hence this forcing adds a cofinal subtower satisfying~(K). Instead of proving Theorem~\ref{ccc-special} directly, we show a slightly stronger theorem. Namely, under $\mathsf{MA}_{\kappa}$ every tower of length $\lambda \leq \kappa$ (with $\lambda$ of uncountable cofinality) can be modified to a tower with condition (K) by a
minor cosmetic operation: it is enough to add at most one integer to each level and to remove at most one integer from each level. Proving ccc for this forcing is similar to proving it for the forcing mentioned above.

\begin{thm} \label{MA-similar}
    Let $\kappa$ be a regular uncountable cardinal and let $\mathcal{T} = \left(T_\alpha\right)_{\alpha<\kappa}$ be a tower. 
    There is a ccc forcing $\mathbb{P}$ which generically adds a tower $\mathcal{T}' = \left(T'_\alpha\right)_{\alpha<\kappa}$ 
    such that $|T_\alpha \setminus T'_\alpha|\leq 1$ and $|T'_\alpha\setminus T_\alpha|\leq 1$ for each $\alpha<\om_1$, and $\mathbb{P}$ forces that $\mathcal{T}'$ satisfies condition (K).
\end{thm}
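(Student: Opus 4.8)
The plan is to design a ccc forcing $\mathbb{P}$ whose generic object simultaneously records the finitely many ``cosmetic'' modifications (at most one point added and one removed per level) together with a finite approximation to a subset on which condition (K) is to be verified. A natural condition should be a triple $p = (F_p, a_p, r_p, n_p)$ where $F_p \in [\kappa]^{<\om}$ is the finite set of indices being handled, $a_p, r_p \colon F_p \to \om$ assign to each $\alpha \in F_p$ the point to be added and the point to be removed (defining $T'_\alpha = (T_\alpha \cup \{a_p(\alpha)\}) \setminus \{r_p(\alpha)\}$), and $n_p \in \om$ is a threshold below which the modifications are frozen. Extension $q \leq p$ requires $F_p \sub F_q$, $n_p \le n_q$, agreement of $a,r$ on $F_p$, and the key promise that for all $\alpha \ne \beta$ in $F_q$ one has $T'_\alpha \nsubseteq T'_\beta$, witnessed by a point below $n_q$ so that the witness is permanent. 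Genericity over the dense sets $D_\alpha = \{p \colon \alpha \in F_p\}$ (which are dense because a single new index can always be accommodated by choosing its added/removed points and raising $n_p$) then yields the promised tower $\mathcal{T}'$ with $|T_\alpha \triangle T'_\alpha| \le 2$ and condition (K) on all of $\kappa$.

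First I would verify that the $D_\alpha$ are dense: given $p$ and $\alpha \notin F_p$, I need to pick $a_p(\alpha), r_p(\alpha)$ and a large $n$ so that $\alpha$ is $(K)$-incompatible with every $\beta \in F_p$ in both directions. For $\beta > \alpha$ we have $T_\alpha \setminus T_\beta$ infinite, so $T'_\alpha \nsubseteq T'_\beta$ is free; the delicate direction is $T'_\beta \nsubseteq T'_\alpha$ for $\beta < \alpha$, where $T_\beta \sub^* T_\alpha$, so I must use the single added point $a(\beta)$ (or removed point $r(\alpha)$) to manufacture a witness $m \in T'_\beta \setminus T'_\alpha$. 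This is exactly where the ``add one, remove one'' budget is spent, and I would arrange the witnesses to lie below the new threshold $n$ so they survive all further extensions.

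The main obstacle is the ccc argument, and I would handle it by the same $\Delta$-system reduction used in Proposition \ref{cccHausdorff}. Given uncountably many conditions $p_\xi$, refine so that $n_\xi = n$ is constant, the $F_\xi$ form a $\Delta$-system with root $F$ on which $a_\xi, r_\xi$ agree, the added/removed values on the stems are below $n$, and the stems $F'_\xi = F_\xi \setminus F$ are pairwise order-separated. To amalgamate two conditions $p_\xi, p_\eta$ into $\langle F_\xi \cup F_\eta, \ldots, n\rangle$ it suffices to produce, for each cross pair $(\alpha,\beta)$ with $\alpha \in F'_\xi$, $\beta \in F'_\eta$, a $(K)$-witness. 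The hard pairs are again those with $\beta < \alpha$ inside the almost-inclusion $T_\beta \sub^* T_\alpha$; to find a witness $m \in T_\beta \setminus T_\alpha$ below $n$ uniformly I would invoke Lemma \ref{single_pairs} applied to the sets $A_\xi = \bigcap\{T_\zeta \colon \zeta \in F'_\xi\} \setminus n$ and $B_\xi = \bigcup\{T_\zeta \colon \zeta \in F'_\xi\} \setminus n$, whose hypotheses hold because consecutive stems sit strictly above one another in the tower order. The lemma delivers $\xi < \eta$ with $A_\xi \nsubseteq B_\eta$, i.e.\ a point witnessing incompatibility between the two blocks; together with the freely available witnesses and the cosmetic points $a,r$ this produces a common extension, establishing ccc. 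I expect the bookkeeping that confines every witness below the common threshold $n$ (so that the amalgamated promise is genuinely satisfiable) to be the most error-prone part, but no single step is conceptually deep beyond the application of Lemma \ref{single_pairs}.
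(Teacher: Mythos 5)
Your forcing is essentially the paper's: the paper's conditions are exactly the triples $(F_p,a_p,r_p)$ with the requirement that the modified sets $T_\alpha\cup\{a_p(\alpha)\}\setminus\{r_p(\alpha)\}$ be pairwise $\sub$-incomparable, ordered by end-extension of $a$ and $r$. Your extra coordinate $n_p$ is redundant: once $\alpha\in F_p$ the set $T'_\alpha$ is completely determined and never changes under extension, so every witness is automatically permanent and no threshold bookkeeping is needed. In the density argument you have the tower direction inverted: in this paper towers are $\sub^*$-increasing ($T_\alpha\setminus T_\beta$ is finite iff $\alpha\le\beta$), so for $\beta>\alpha$ the set $T_\alpha\setminus T_\beta$ is \emph{finite} and $T'_\alpha\nsubseteq T'_\beta$ is not free; it is the second delicate direction, secured by choosing the freshly added point $a(\alpha)$ outside $\bigcup\left\{T_\beta\cup\{a(\beta)\}\colon \beta\in F_p,\ \beta>\alpha\right\}$ (a co-infinite union), while $r(\alpha)$, chosen in $\bigcap\left\{T_\beta\setminus\{r(\beta)\}\colon \beta\in F_p\cap\alpha\right\}$, handles the pairs below $\alpha$, exactly as you say. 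That slip is local and fixable.

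The genuine gap is in the ccc argument, and it is precisely the point where Theorem \ref{MA-similar} differs from Proposition \ref{cccHausdorff}: the theorem is claimed for \emph{every} regular $\kappa$, and for $\kappa\ge\om_2$ you cannot thin an uncountable family of pairwise disjoint finite stems $F'_\xi\in[\kappa]^{<\om}$ to an uncountable order-separated subfamily. For instance, with $F'_\xi=\{\xi,\ \om_1\cdot 2+\xi\}$ for $\xi<\om_1$, no two stems satisfy $\max F'_\xi<\min F'_\eta$; then the hypothesis $B_\xi\sub^* A_\eta$ of Lemma \ref{single_pairs} fails for your block sets (indeed $B_\xi\setminus A_\eta\supseteq (T_{\om_1\cdot 2+\xi}\setminus T_\eta)\setminus n$ is infinite), and moreover no single point in $A_\xi\setminus B_\eta$ could witness all the delicate cross pairs anyway, since for interleaved stems the delicate inclusions run in \emph{both} directions between the two blocks ($T'_\xi\nsubseteq T'_{\om_1\cdot 2+\eta}$ but also $T'_\eta\nsubseteq T'_{\om_1\cdot 2+\xi}$). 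This is why the paper only normalizes each coordinate sequence $(\xi^i_\alpha)_{\alpha<\om_1}$ to be constant or strictly increasing and proves Lemma \ref{multi_towers}, which yields, for finitely many towers simultaneously, a single pair $\alpha<\beta$ with $T_{\xi^i_\alpha}\nsubseteq T_{\xi^i_\beta}$ for all $i$; the cross-coordinate pairs are then inherited from the witnesses inside the individual conditions after stabilizing the traces below a fixed level, the same maneuver as in the Claim inside the proof of Lemma \ref{multi_towers}. As written, your argument proves the theorem only for $\kappa=\om_1$ --- where, to its credit, it is cleaner than the paper's, since one application of Lemma \ref{single_pairs} produces a single point witnessing all $k^2$ cross pairs at once --- but to reach arbitrary regular $\kappa$ you need the coordinatewise Lemma \ref{multi_towers} or an equivalent device for interleaved stems.
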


This theorem together with Proposition \ref{special-similar} implies Theorem \ref{ccc-special}. 

\begin{lem}\label{multi_towers}
	For $k < \om$ and each $i < k$ let $\mathcal{T}_i = \left(T_\alpha^i\right)_{\alpha < \om_1}$ be a tower.
There exist $\zeta < \xi < \om_1$ such that $T_\zeta^i \nsubseteq T_\xi^i$ for each $i < k$.
\end{lem}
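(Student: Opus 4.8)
The plan is to prove, by induction on $k$, the following relativized statement: for any $k$ towers $\mathcal T_0,\dots,\mathcal T_{k-1}$ and any uncountable $X\sub\om_1$ there are $\zeta<\xi$ in $X$ with $T_\zeta^i\nsubseteq T_\xi^i$ for every $i<k$. The lemma is the case $X=\om_1$.

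First the base case $k=1$. Fix an uncountable $X$ and suppose $T_\zeta^0\sub T_\xi^0$ for all $\zeta<\xi$ in $X$. Then $(T_\xi^0)_{\xi\in X}$ is a genuinely $\sub$-increasing chain, and since $\xi<\xi'$ in a tower forces $T_{\xi'}^0\sm T_\xi^0$ to be infinite, each successor step adds a new integer; choosing one such integer at each step embeds $X$ into $\om$, contradicting $|X|=\om_1$. (This is exactly the mechanism already used in Lemma~\ref{single_pairs}.) Hence a non-inclusion pair exists in $X$.

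For the inductive step, suppose the statement holds for $k$, let $\mathcal T_0,\dots,\mathcal T_k$ and an uncountable $X$ be given, and assume toward a contradiction that no $\zeta<\xi$ in $X$ is simultaneously non-included in all $k+1$ towers. The key reduction is: if some tower $\mathcal T_{i_0}$ admits an uncountable $\sub$-antichain $W\sub X$ (that is, $T_\zeta^{i_0}\nsubseteq T_\xi^{i_0}$ for all $\zeta<\xi$ in $W$), then on $W$ the remaining $k$ towers can have no simultaneous non-inclusion pair, for such a pair, combined with the antichain property of $W$, would be good for all $k+1$ towers. Applying the induction hypothesis to the $k$ towers $\mathcal T_j$ ($j\le k$, $j\ne i_0$) on the uncountable set $W$ produces precisely such a pair, a contradiction. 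Consequently, under our assumption none of the $\mathcal T_i$ admits an uncountable $\sub$-antichain on any uncountable subset of $X$; equivalently, each $\mathcal T_i\har X$ is a Suslin tower.

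This last, all-Suslin configuration is where I expect the real difficulty to lie, and it cannot be resolved by the order-theoretic skeleton alone: Suslin towers, whose $\sub$-order has neither uncountable chains nor uncountable antichains, exist consistently, so the fact that finitely many of them cannot have their inclusion relations cover every pair must come from the concrete realization of the $T_\alpha^i$ as subsets of $\om$. My plan for this crux is to use that a Suslin tower generates a meager ideal, via the equivalence noted in the Introduction between the absence of an uncountable $\sub$-antichain and realizing only oscillation $1$, together with Todor{\v c}evi{\'c}'s theorem that a family generating a non-meager ideal realizes all oscillations. Meagerness of each $\mathcal I_i$ furnishes an interval partition of $\om$; amalgamating the finitely many partitions into one interval partition and tracking, for a suitably chosen $\xi$ of countable cofinality, how the finite sets $T_\zeta^i\sm T_\xi^i$ are distributed across the blocks should yield a single $\zeta<\xi$ witnessing non-inclusion in every coordinate at once. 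The delicate point, and the main obstacle, is that simultaneous non-inclusion is a statement about the finite differences $T_\zeta^i\sm T_\xi^i$, which are not directly controlled by the infinitary (meagerness) data, so the combinatorics combining the $k$ partitions with the tower's almost-inclusions will have to be arranged with care.
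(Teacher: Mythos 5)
Your base case is correct and your first reduction is sound: if some $\mathcal T_{i_0}$ had an uncountable $\sub$-antichain $W\sub X$, the (suitably relativized) induction hypothesis applied to the remaining $k$ towers on $W$ would finish the step. But this reduction only isolates the hard case, and your plan for that case rests on a false implication. Todor\v{c}evi\'c's theorem, as used in this paper, says that a family generating a \emph{non-meager} ideal realizes all oscillations; its contrapositive is that a \emph{special} tower generates a meager ideal, not that a Suslin tower does. Suslinity is entirely compatible with non-meagerness --- indeed the prototypical Suslin towers are the non-meager ones, e.g.\ any tower generating an ultrafilter. So in your ``all-Suslin'' configuration there need be no interval partitions to amalgamate, and the meagerness-based combinatorics you sketch cannot even begin. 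Since you yourself leave this step at the level of ``should yield'' and ``will have to be arranged with care,'' the proposal has a genuine gap exactly at its crux.

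The paper's proof needs no Suslin/special dichotomy and no topological input. At the $(k+1)$-st step it applies the induction hypothesis repeatedly (to tail segments, so your relativized formulation is in the right spirit) to obtain $\om_1$-many consecutive pairs $\zeta_\alpha<\xi_\alpha$ with $T^i_{\zeta_\alpha}\nsubseteq T^i_{\xi_\alpha}$ for all $i<k$ and $\xi_\alpha<\zeta_\beta$ for $\alpha<\beta$. The missing idea is a stabilization of finite traces: fix $n<\om$ below which all these non-inclusions are witnessed, then refine so that $T^i_{\zeta_\alpha}\cap n$ and $T^i_{\xi_\alpha}\cap n$ are constant in $\alpha$ for each $i<k$; this upgrades the non-inclusions to \emph{cross-pair} non-inclusions $T^i_{\zeta_\alpha}\nsubseteq T^i_{\xi_\beta}$ for all $\alpha,\beta<\om_1$. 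Now either some pair already works for the last tower, or $T^k_{\zeta_\alpha}\sub T^k_{\xi_\alpha}$ for every $\alpha$, in which case Lemma~\ref{single_pairs} applied to $A_\alpha=T^k_{\zeta_\alpha}$, $B_\alpha=T^k_{\xi_\alpha}$ (note $A_\alpha\sub B_\alpha\sub^* A_\beta$ and $A_{\alpha+1}\setminus B_\alpha$ is infinite) produces $\alpha<\beta$ with $T^k_{\zeta_\alpha}\nsubseteq T^k_{\xi_\beta}$, so the cross pair $\zeta_\alpha<\xi_\beta$ is good for all $k+1$ towers simultaneously. The whole argument is as elementary as your base case --- finite traces plus the impossibility of a genuinely $\sub$-increasing $\om_1$-chain --- and this trace-stabilization trick is what your proposal is missing.
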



\begin{proof}
We prove the lemma by induction on $k$. The statement holds true for $k=1$.
At the $(k+1)$-th step use the induction hypothesis to find pairs $\zeta_\alpha < \xi_\alpha$ for $\alpha < \om_1$ 
such that $T_{\zeta_\alpha}^i \nsubseteq T_{\xi_\alpha}^i$ and $\xi_\alpha < \zeta_\beta$ for each $i < k$ and $\alpha < \beta < \om_1$.

\begin{claim}
We can moreover assume that $T_{\zeta_\alpha}^i \not\sub T_{\xi_\beta}^i$ for each $\alpha, \beta < \om_1$ and $i < k$. 
\end{claim}
We can first refine the system so that there is $n<\om$ such that
$T_{\zeta_\alpha}^i \cap n \nsubseteq T_{\xi_\alpha}^i \cap n$ for each $i < k$ and $\alpha < \om_1.$
After that refine further to get $T_{\zeta_\alpha}^i \cap n$ and $T_{\xi_\alpha}^i \cap n$ constant for a fixed $i$.
\qed\medskip

We are done if { $T_{\zeta_\alpha}^k \not\sub T_{\xi_\alpha}^k$} for some $\alpha < \om_1$, so suppose the opposite.
Lemma~\ref{single_pairs} states that there are $\alpha < \beta < \om_1$ such that $T_{\xi_\alpha}^k \not\sub T_{\zeta_\beta}^k$.
Thus $\xi = \xi_\alpha$ and $\zeta = \zeta_\beta$ are as required. 
\end{proof}

\begin{proof}[Proof of Theorem \ref{MA-similar}]


A condition $p \in \mathbb{P}$ is of the form $\left(F_p, a_p, r_p\right)$, where
\begin{itemize}
	\item $F_p\in [\kappa]^{<\om}$;
	\item $a_p\colon F \to \om$ and $r_p\colon F \to \om$;
	\item for every $\alpha < \beta\in F$ we have
	\[T_\alpha \cup \left\{a_p(\alpha)\right\} \setminus \left\{r_p(\alpha)\right\} \nsubseteq T_\beta \cup \left\{a_p(\beta)\right\} \setminus \left\{r_p(\beta)\right\}.\]
\end{itemize}
The ordering is given by $q\leq p$ if $F_p \sub F_q$, $a_q|F_p = a_p$ and $r_q|F_p = r_p$. Notice that for each condition $p\in \mathbb{P}$ and $\alpha< \kappa$ there is $q\in \mathbb{P}$ such that $\alpha\in F_q$ and $q\leq p$. Indeed, choose
\[ m\notin \bigcup\left\{T_\xi\cup \left\{a_p(\xi)\right\} \colon \xi\in F_p\setminus \alpha\right\}\] 
and 
\[ n\in \bigcap\left\{T_\xi\setminus \left\{r_p(\xi)\right\} \colon \xi\in F_p\cap \alpha\right\} \setminus \{m\} \]
and define $F_q = F_p \cup \{\alpha\}$, $a_q(\alpha) = m$, $r_q(\alpha) = n$. Let $G$ be a $\mathbb{P}$-generic, $a = \bigcup_{p\in G} a_p$, $r = \bigcup_{p\in G} r_p$. Clearly, the tower defined by \[T'_\alpha = T_\alpha \cup \{a(\alpha)\} \setminus \{r(\alpha)\}\] is as desired.


It only remains to show that our forcing is ccc.
Let $\{p_\alpha \colon \alpha < \om_1\}$ be a set of conditions. We will denote $F_\alpha = F_{p_\alpha}$, $a_\alpha = a_{p_\alpha}$ and $r_\alpha = r_{p_\alpha}$. 
By thinning out the sequence if necessary, we may assume that
 $F_\alpha = \left\{\xi_\alpha^0 < \xi_\alpha^1 < \ldots < \xi_\alpha^{k-1}\right\}$
 for each $\alpha<\om_1$ and that $a_\alpha\left(\xi^i_\alpha\right)$ and~$r_\alpha\left(\xi^i_\alpha\right)$ depend only on $i$.
Using the $\Delta$-lemma we further assume that $F_\alpha = F \cup F'_\alpha$ for each $\alpha<\om_1$, where
$(F'_\alpha)_{\alpha<\om_1}$ is pairwise disjoint and there is $I\sub k$ such that $F = \left\{\xi^i_\alpha \colon i\in I\right\}$ for every $\alpha<\om_1$. 
So for each $i<k$ the sequence $(\xi^i_\alpha)_{\alpha<\om_1}$ is either constant or injective.  
Considering a subsequence once again (if necessary), we may assume that $(\xi^i_\alpha)_{\alpha < \om_1}$ is either constant or strictly increasing for each $i<k$.	
We may also assume that there is $l<\omega$ such that the sequence 
\[ \left( \left( (T_{\xi^i_\alpha} \cup \{a_\alpha(\xi^i_\alpha)\} ) \setminus \{ r_\alpha(\xi^i_\alpha)\}\right) \cap l\right)_{\alpha<\omega_1} \]
is constant for each $i < k$, where $l$ is such that 
\[ \left( \left(T_{\xi^i_\alpha} \cup \{a_\alpha(\xi^i_\alpha)\}\right) \setminus \{r_\alpha(\xi^i_\alpha)\}\right) \cap l\]
 and  
\[ \left( \left(T_{\xi^j_\alpha}  \cup \{a_\alpha(\xi^j_\alpha)  \}\right) \setminus \{r_\alpha(\xi^j_\alpha  )\}\right) \cap l \]
are $\subseteq$-incompatible for $i \neq j$.
Apply Lemma \ref{multi_towers} to find $\alpha$, $\beta$ such that 
\[T_{\xi^i_\alpha} \cup \{ a_\alpha(\xi^i_\alpha)\}) \setminus \{ r_\alpha(\xi^i_\alpha) \} \nsubseteq T_{\xi^i_\beta} \cup \{ a_\beta(\xi^i_\beta) \} \setminus \{ r_\beta(\xi^i_\beta) \}\] for each $i\in k\setminus I$.
Now $q = \left(F_\alpha \cup F_\beta, a_\alpha \cup a_\beta, r_\alpha \cup r_\beta\right)$ is a condition in $\mathbb{P}$, and $q\leq p_\alpha$, $q\leq p_\beta$. 
\end{proof}

Proposition \ref{cccHausdorff} is an analogue of the theorem stating that under $\mathsf{MA}_{\om_1}$ every gap is Hausdorff. In \cite{abraham} the authors prove that the same statement holds assuming the P-ideal dichotomy. This is not true for towers. The P-ideal
dichotomy is compatible with $\mathsf{CH}$ and under $\mathsf{CH}$ Suslin towers do exist. However, if we additionally assume that $\mathfrak{b}$ is big, the P-ideal dichotomy implies that every $\om_1$-tower is Hausdorff. Recall that $\mathfrak{b}$ is the minimal cardinality of a family in $\om^\om$ which cannot be $\leq^*$-dominated by a single function. The \emph{P-ideal dichotomy} ($\mathsf{PID}$) is the assertion: for every P-ideal $\mathcal{I} \sub [\om_1]^\om$ one of the
following holds:
\begin{itemize}
	\item there is an uncountable $K\sub \om_1$ such that $[K]^\om \sub \mathcal{I}$;
	\item $\om_1 = \bigcup_{n<\om} A_n$ and $A_n\cap I$ is finite for each $n< \om$ and  $I\in \mathcal{I}$.
\end{itemize}
Notice that if 
for each uncountable $K\sub \om_1$ there is an infinite $I\sub K$, $I\in \mathcal{I}$, then the~second alternative cannot hold.

\begin{thm}\label{PID}
Assume $\mathsf{PID}$. Every $\om_1$-tower is Hausdorff if and only if $\mathfrak{b}>\om_1$.
\end{thm}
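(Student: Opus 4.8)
The plan is to run the Abraham--Todor\v{c}evi\'c P-ideal machinery (as in \cite{abraham} for gaps) adapted to a single tower, and to isolate the role of $\bb$ in the \emph{second} alternative of $\mathsf{PID}$. Fix a tower $\TT=(T_\alpha)_{\alpha<\oo}$ and associate to it
\[ \II_\TT=\Big\{X\in[\oo]^{\le\om}\colon \forall\eta<\oo\ \forall F\in[\om]^{<\om}\ \big|\{\xi\in X\colon \xi<\eta,\ T_\xi\sm T_\eta\sub F\}\big|<\om\Big\}. \]
The point of this definition is a reformulation of condition (H): a subtower $(T_\alpha)_{\alpha\in K}$ satisfies (H) exactly when, for every $\eta\in K$ and every finite $F$, only finitely many $\xi\in K\cap\eta$ have $T_\xi\sm T_\eta\sub F$. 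This version is invariant under permutations of the underlying set (any finite set equals some initial segment, and conversely), hence independent of the identification of the underlying set with $\om$. Consequently $[K]^{\om}\sub\II_\TT$ holds \emph{iff} the subtower indexed by $K$ satisfies (H), so the first alternative of $\mathsf{PID}$ applied to $\II_\TT$ is precisely the assertion that $\TT$ is Hausdorff.

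First I would check that $\II_\TT$ is a P-ideal. It is clearly hereditary and closed under finite unions; for countable directedness it is cleanest to read the defining condition topologically in $\Pw{\om}=2^\om$: ``$T_\xi\sm T_\eta\sub F$'' says $T_\xi$ lies in the closed set $\{S\colon S\sub T_\eta\cup F\}$, so $X\in\II_\TT$ means the sequence $(T_\xi)_{\xi\in X}$ has no accumulation point almost contained in a tower member indexed above $X$, and a compactness/diagonalisation argument then yields pseudounions. I also record that the two $\mathsf{PID}$ alternatives are mutually exclusive here: if $K$ witnessed the first and an uncountable $A_n$ the second, then $A_n\cap K$ would be an infinite member of $\II_\TT$ meeting $A_n$ infinitely, a contradiction.

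With this in hand the theorem reduces to one equivalence: there is a tower $\TT$ and an uncountable $A\sub\oo$ orthogonal to $\II_\TT$ (witnessing the second alternative) \emph{iff} $\bb=\oo$. For the direction $(\Leftarrow)$ I assume $\bb>\oo$ and, given any $\TT$, apply $\mathsf{PID}$ to $\II_\TT$; the first alternative gives Hausdorffness, so it suffices to rule out the second. \textbf{This is the step I expect to be the main obstacle.} The plan is: from an uncountable $A$ orthogonal to $\II_\TT$ extract an unbounded $\le^*$-increasing $\oo$-sequence in $\om^\om$, contradicting $\bb>\oo$. Concretely, pass to the enumerations $e_\alpha$ of the complements $\om\sm T_\alpha$ (a $\le^*$-increasing $\oo$-sequence), assume toward a contradiction they are dominated by a single $g$, and use orthogonality together with the $g$-bound to sieve out an infinite $Y\sub A$ along which the $T_\xi$ are never trapped in a single $T_\eta\cup F$ --- that is, $Y\in\II_\TT$, contradicting orthogonality. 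The case where the tower is maximal, or its orthogonal is generated by one set, must be handled separately, since there the complements need not furnish an unbounded sequence; for such towers one argues Hausdorffness directly.

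For $(\Rightarrow)$ I prove the contrapositive: assuming $\bb=\oo$ I build a non-Hausdorff tower. Fix a $\le^*$-increasing unbounded $(f_\alpha)_{\alpha<\oo}$ and construct, by recursion on $\alpha$, a tower $\TT$ whose growth is driven by $(f_\alpha)$ but arranged so that below each level, along an uncountable index set $A$, infinitely many earlier members are trapped inside a fixed finite modification $T_\eta\cup F$; this makes $A$ orthogonal to $\II_\TT$, so by the dichotomy no uncountable subtower satisfies (H) and $\TT$ is not Hausdorff. The delicate point is the bookkeeping guaranteeing that the trapping persists into \emph{every} uncountable subset rather than only one cofinal ladder, and this is exactly where unboundedness of $(f_\alpha)$ enters, in contrast to the tame defining sequence of a Hausdorff gap. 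Finally, since the alternatives are exclusive and $\mathsf{PID}$ holds, ``every $\oo$-tower is Hausdorff'' is equivalent to ``no tower admits an orthogonal uncountable $A$'', and the two displayed implications give precisely the stated equivalence with $\bb>\oo$.
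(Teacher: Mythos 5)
Your frame---the ideal $\II_\TT$, the observation that the first alternative of $\mathsf{PID}$ for $\II_\TT$ is exactly Hausdorffness of $\TT$---matches the paper's proof. But the step you yourself flag as the main obstacle in the $(\Leftarrow)$ direction is where the argument genuinely fails, and the fix is not a domination sieve. Bounding the enumerations $e_\alpha$ of the complements $\om\sm T_\alpha$ by a single $g$ is the wrong invariant: it only says that the generated ideal is (uniformly) meager, and that is compatible with the existence of an uncountable $A$ orthogonal to $\II_\TT$. Concretely, $\mathsf{PID}$ is consistent with $\mathsf{CH}$; in such a model the Kunen--van Douwen construction gives a Suslin (hence non-Hausdorff) tower, and one can run it inside the even numbers, so that every complement contains all odd numbers and every $e_\alpha$ is bounded by $g(k)=2k+2$. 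Since the first alternative of $\mathsf{PID}$ fails for this tower, the second alternative holds and produces an uncountable $A$ orthogonal to $\II_\TT$. So the implication ``$e_\alpha \le^* g$ for all $\alpha$, plus an orthogonal uncountable $A$, yields an infinite $Y \in \II_\TT\cap [A]^\om$'' is refuted outright; no bookkeeping rescues it. The paper's actual argument is descriptive-set-theoretic: for any uncountable $K$ one finds $x$ in the closure of $\{T_\alpha \colon \alpha \in K\}$ with $x \notin \langle \mathcal{T}\rangle$, because otherwise the tower ideal would be generated by a closed set, hence be a Borel P-ideal, contradicting Todor\v{c}evi\'c's theorem that an analytic P-ideal which is not countably generated needs at least $\dd$ generators (and $\dd \ge \bb > \oo$). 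Any $I \in [K]^\om$ converging to $x$ then lies in $\II_\TT$, since trapping infinitely many $T_\xi$ inside some $T_\eta \cup n$ would force an accumulation point inside $T_\eta \cup n$, i.e.\ in the ideal. The cardinal doing the work is $\dd$, not a $g$-bound on complements.

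The $(\Rightarrow)$ direction has a second gap: your recursion cannot be completed from $\bb=\oo$ alone. To make trapping ``persist into every uncountable subset'' you must diagonalize against all infinite subsets of $\oo$, and $\left|[\oo]^{\om}\right| \ge 2^\om$, while $\bb=\oo$ is consistent with $2^\om$ arbitrarily large; an $\oo$-length recursion has too few stages. The paper avoids bookkeeping entirely via Proposition \ref{b-towers}: from a $\le^*$-increasing unbounded family $(f_\alpha)_{\alpha<\bb}$, the tower $T_\alpha = \{(m,n) \colon n \le f_\alpha(m)\}$ is Suslin, because by Todor\v{c}evi\'c's oscillation theorem every uncountable subfamily of an unbounded family contains $\alpha<\beta$ with $f_\alpha(m) \le f_\beta(m)$ for all $m$, whence $T_\alpha \sub T_\beta$; since Hausdorff implies special (the Hajnal free-set argument), this tower is not Hausdorff, and no $\mathsf{PID}$ is needed here---a uniform combinatorial reason replaces the impossible bookkeeping. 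Finally, your unconditional claim that $\II_\TT$ is a P-ideal in $\mathsf{ZFC}$ via ``compactness/diagonalisation'' is unsubstantiated: a pseudo-union must satisfy $\oo$-many finiteness constraints simultaneously, and the paper proves this claim only under $\bb>\oo$, by dominating the $\oo$-many functions $f_\alpha(n)=\max\{k \colon T_{\xi^n_k}\sm T_\alpha \sub n\}$. Since you invoke $\mathsf{PID}$ only when $\bb>\oo$, that particular issue is repairable, but as written it is a further hole.
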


\begin{remark*}
A related result with a similar proof was obtained independently in~\cite{Stevo-Dilip}.
Namely:
\begin{thm*}
Assume $\mathsf{PID}$. The following are equivalent.
\begin{enumerate}
	\item $\min \left\{\mathfrak b, \cof(\mathcal F_\sigma) \right\} > \om_1$.
	\item There are only 5 Tukey types of directed sets of size at most $\om_1$.
\end{enumerate}
\end{thm*}

For the definition of the cardinal invariant $\cof(\mathcal F_\sigma)$ see~\cite{Stevo-Dilip}.
The relation of these results becomes apparent in Section~\ref{Tukey}, where it is shown that Hausdorff towers correspond to the Tukey type $\left[\om_1\right]^{<\om}$.

\end{remark*}

\begin{proof}
In the next section we shall prove that a Suslin tower of length $\mathfrak{b}$ always exists (Proposition \ref{b-towers}). We prove here only the ``if'' part of the~theorem.

Define an ideal $\mathcal{I} \sub [\om_1]^{\leq \om}$ by 
\[ I \in \mathcal{I} \mbox{ iff } C^n_\alpha(I) = \left\{\xi\in \alpha\cap I\colon T_\xi \setminus T_\alpha \sub n\right\} \mbox{ is finite for each }\alpha<\om_1, n< \om. \]
\begin{claim} If $\mathfrak{b}>\om_1$, then $\mathcal{I}$ is a P-ideal.\end{claim}

Consider a sequence $\{I_n \colon n< \om\} \in [\mathcal{I}]^\om$. Assume without loss of generality that $(I_n)_{n<\om}$ 
is pairwise disjoint, and fix an enumeration $I_n = \left\{\xi^n_k\colon k< \om\right\}$ for each $n$. For~every $\alpha<\om_1$ define a function $f_\alpha \colon \om\to \om$
by\[ f_\alpha(n) = \max \left\{k \colon T_{\xi^n_k} \setminus T_\alpha \sub n\right\}. \]
Let $g\colon \om\to\om$ be a function $\leq^*$-dominating $\{f_\alpha\colon \alpha<\om_1\}$. Let 
\[ I = \bigcup_{n< \om} I_n \setminus \left\{\xi^n_k\colon k\leq g(n)\right\}. \]
It is straightforward to check that $I\in \mathcal{I}$ and $I_n \sub^* I$ for each $n$. \qed
\medskip

The first alternative of $\mathsf{PID}$ for $\mathcal{I}$ gives us a subtower which fulfills condition (H),
so we only need to refute the second alternative of $\mathsf{PID}$. We shall show that for each uncountable $K\sub \om_1$ there is $I \in \mathcal I \cap {[K]}^\om.$



\begin{claim}
	There exists $x \in 2^\om$ such that $x \in \overline{\{ T_\alpha \colon \alpha \in K\}}$ (the closure in the Cantor space)
	but~$x \notin \langle T_\alpha \rangle_{\alpha < \om_1}$ (the ideal generated by the tower). 
\end{claim}
If $\overline{\{ T_\alpha \colon \alpha \in K\}} \sub \langle T_\alpha \rangle_{\alpha < \om_1}$, 
then the ideal $\langle T_\alpha \rangle_{\alpha < \om_1}$ is generated by a closed set and thus it is an analytic P-ideal.
On the other hand, an analytic P-ideal which is not countably generated cannot be generated by less than $\mathfrak d$-many sets \cite[Theorem 6]{Stevo-anal-quotients}. 
\qed\medskip

Fix $I \in [K]^\om$ such that $x$ is the single accumulation point of $\{T_\alpha \colon \alpha \in I\}$.
To conclude that $I \in \mathcal I$, notice that if for some $\beta < \om_1$ and $n < \om$ we have $T_\alpha \sub T_\beta \cup n$ for infinitely many $\alpha \in I$,  there would be an accumulation point of $\{T_\alpha \colon \alpha \in I\}$ which would be a subset of $T_\beta \cup n$ and hence in $\langle T_\alpha \rangle_{\alpha < \om_1}$.
\end{proof}

This seems a convenient moment at which to mention the following two results. Note that none of them directly implies Corollary \ref{MA-Hausdorff}.

\begin{thm} [Shelah \cite{shelah885}]
$\mathsf{MA_{\om_1}}$ implies that every $\om_1$-tower is a right half of~a~Hausdorff gap. 
\end{thm}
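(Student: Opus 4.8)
The plan is to deduce this from two facts already recorded in the excerpt, rather than to build the left half from scratch. Let $\mathcal T = (T_\alpha)_{\alpha<\om_1}$ be a tower. The first step is to invoke the result discussed in Section \ref{basic} (following \cite{spasojevic1} and \cite[Remark 2.4]{shelah885}) that, under $\mathsf{MA}(\om_1)$, every $\om_1$-tower is \emph{a} half of an $(\om_1,\om_1)$-gap. This already uses that $\mathsf{MA}(\om_1)$ rules out the obstructions noted earlier: there are no maximal $\om_1$-towers (so $\mathcal T$ need not be confined to an $(\om_1,1)$-gap) and no $\om_1$-scales. So we obtain a second tower placed opposite $\mathcal T$ in a genuine gap.

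The second step is a normalization fixing the \emph{orientation}. I would check that being an $(\om_1,\om_1)$-gap is symmetric in the two halves: if a set $L$ interpolates the pre-gap $(L_\alpha,R_\alpha)$, meaning $L_\alpha \sub^* L$ and $R_\alpha\cap L =^* \emptyset$ for all $\alpha$, then its complement $L^c$ interpolates $(R_\alpha,L_\alpha)$, since $R_\alpha \sub^* L^c$ and $L_\alpha\cap L^c =^* \emptyset$; the converse is identical. Hence the gap from the first step may be written as $(L_\alpha,T_\alpha)_{\alpha<\om_1}$ with $L_\alpha\cap T_\alpha=\emptyset$, placing $\mathcal T$ as the \emph{right} half. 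For the third step I would apply the classical fact recorded above (see \cite{abraham}) that under $\mathsf{MA}(\om_1)$ every gap is Hausdorff, i.e.\ contains a subgap satisfying condition~(H). Applied to $(L_\alpha,T_\alpha)_{\alpha<\om_1}$ this yields a cofinal set of indices on which $\{\xi<\alpha\colon L_\xi\cap T_\alpha \sub n\}$ is finite for all relevant $\alpha$ and $n<\om$. Since condition~(H) is written with the right half in the role of $R_\alpha$, this is exactly the statement that $(L_\alpha,T_\alpha)_{\alpha<\om_1}$ is a Hausdorff gap (Definition \ref{gap-types}) whose right half is $\mathcal T$. Note that no reindexing of $\mathcal T$ is required: the full pre-gap $(L_\alpha,T_\alpha)_{\alpha<\om_1}$ is itself the witnessing Hausdorff gap, the subgap with~(H) serving only to certify Hausdorffness.

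The one point that genuinely needs care—and where a careless argument would fail—is precisely this orientation. Hausdorff gaps are left-oriented (Fact \ref{Hausdorff is left oriented}), so the two halves of a Hausdorff gap are \emph{not} interchangeable, and one cannot simply say ``$\mathcal T$ is a half of a Hausdorff gap, hence a right half.'' What makes the deduction go through is that it is the symmetry of \emph{gap-ness}, not of Hausdorffness, that is used in the second step to force $\mathcal T$ onto the right before the ``every gap is Hausdorff'' theorem is applied. A self-contained alternative would be to add the left half $(L_\alpha)$ directly by a forcing of finite approximations that keeps $L_\alpha$ disjoint from $T_\alpha$, increasing, and satisfying~(H), and then to apply $\mathsf{MA}(\om_1)$ to it; there the main obstacle would be verifying the countable chain condition against condition~(H)—essentially the combinatorics already carried out in Propositions \ref{cccHausdorff} and \ref{MA-similar}. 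The reduction above avoids this work by reusing the two cited theorems.
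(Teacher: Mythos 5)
Your deduction is correct, but there is nothing in the paper to compare it with: the paper states this theorem as a quotation from Shelah \cite{shelah885} and gives no proof. What you have supplied is a formal derivation from two other facts the paper records, namely that under $\mathsf{MA}(\om_1)$ every $\om_1$-tower is a half of an $(\om_1,\om_1)$-gap (\cite{spasojevic1}, \cite[Remark 2.4]{shelah885}) and that under $\mathsf{MA}(\om_1)$ every gap is Hausdorff (\cite{abraham}). The logic is sound: interpolation is symmetric under complementation, so gap-ness, unlike Hausdorffness, is orientation-free, and since the quoted Hausdorffness theorem quantifies over \emph{all} gaps it applies to $(L_\alpha, T_\alpha)_{\alpha<\om_1}$ once you have placed $\mathcal T$ on the right; the full pre-gap is then itself the witnessing Hausdorff gap, as you say. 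Your insistence that one cannot shortcut through ``$\mathcal T$ is a half of a Hausdorff gap, hence a right half'' is exactly the right point of care: Fact \ref{Hausdorff is left oriented} and Proposition \ref{half-is-Hausdorff} show the two halves of a Hausdorff gap carry very different structure, Example \ref{uninvertible-haus} shows Hausdorffness is genuinely asymmetric, and this asymmetry is also why the paper remarks that Shelah's theorem does not directly imply Corollary \ref{MA-Hausdorff}.

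Two smaller observations. First, your orientation-fixing step can be skipped entirely: Theorem \ref{thm-spasojevic} (Spasojevi\'c), which holds under $\mathsf{MA}(\sigma\text{-centered})$ and hence under $\mathsf{MA}(\om_1)$, already places $\mathcal T$ as the \emph{right} half of a (left-oriented) gap, after which ``every gap is Hausdorff'' finishes the proof in one line. Second, since your first step is credited in part to \cite[Remark 2.4]{shelah885} itself, the reduction is not fully independent of Shelah's paper; it is not circular, however, because ``half of a gap'' is strictly weaker than the conclusion, and Spasojevi\'c's theorem supplies that input independently. A final pedantic point you gloss over: the first step may a priori hand you levels with $L_\alpha\cap T_\alpha$ only almost empty, whereas the paper's pre-gap definition demands exact disjointness; finitely modifying each $L_\alpha$ repairs this and is harmless, since the properties of Definition \ref{gap-types} respect gap equivalence by the lemma following that definition.
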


\begin{thm}[Spasojevi\'c \cite{spasojevic1}]\label{thm-spasojevic}
	$\mathsf{MA}_{\omega_1}(\sigma\text{-centered})$ implies that every $\om_1$-tower is a~right half of~a~left-oriented gap. 
\end{thm}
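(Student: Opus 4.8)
\emph{Proof proposal.} The plan is to manufacture the missing left half $(L_\alpha)_{\alpha<\om_1}$ of the gap by a $\sigma$-centered forcing and to meet the relevant $\om_1$ dense sets by $\mathsf{MA}(\sigma\text{-centered})$, which gives $\pp>\om_1$. First I would reduce the orientation requirement to a set-mapping statement, exactly in the spirit of Fact~\ref{Hausdorff is left oriented}. It suffices to produce a tower $(L_\alpha)_{\alpha<\om_1}$ with $L_\alpha\cap T_\alpha=\emptyset$ for each $\alpha$ (so that $(L_\alpha,T_\alpha)$ is a pre-gap, $\mathcal T$ being the right half, and every cross-intersection is automatically finite) together with the extra property that for each $\beta$ the set $f(\beta)=\left\{\xi<\beta\colon L_\xi\cap T_\beta=\emptyset\right\}$ is finite. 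Then $f$ is a set mapping, and Hajnal's free set theorem produces an uncountable $X\sub\om_1$ on which $L_\xi\cap T_\beta\neq\emptyset$ whenever $\xi<\beta$ lie in $X$; the subgap indexed by $X$ satisfies condition (O), so the gap is left-oriented. Since (O) implies (K) on $X$, and a pre-gap with property (K) cannot be interpolated (this is precisely Kunen's criterion), the $X$-subgap — and hence $(L_\alpha,T_\alpha)$ itself — is a genuine $(\om_1,\om_1)$-gap, which moreover is special and left-oriented (cf.\ Proposition~\ref{half-is-special}).

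For the forcing, a condition is a finite approximation $p=\left(F_p,\langle a^p_\alpha\rangle_{\alpha\in F_p},n_p\right)$ with $F_p\in[\om_1]^{<\om}$, $n_p\in\om$, each $a^p_\alpha\sub n_p$ approximating $L_\alpha\cap n_p$ and satisfying $a^p_\alpha\cap T_\alpha=\emptyset$, subject to the coherence invariant tying the finite differences to orientation witnesses: $a^p_\alpha\setminus a^p_\beta\sub T_\beta$ for all $\alpha<\beta$ in $F_p$. The ordering freezes the decided part below $n_p$ and, above $n_p$, propagates every newly added point of $a_\alpha$ upward into $a_\beta$ \emph{unless} it already belongs to $T_\beta$, in which case it is kept out of $a_\beta$ and recorded as a witness (the invariant is preserved, since an undischarged point lies in $T_\beta$ by the contrapositive of the propagation rule). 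Writing $L_\alpha=\bigcup\{a^p_\alpha\colon p\in G\}$, the invariant yields $L_\alpha\setminus L_\beta\sub T_\beta$, whence for $\alpha<\beta$ one computes $L_\alpha\cap T_\beta=L_\alpha\setminus L_\beta$; thus the tower condition ``$L_\alpha\setminus L_\beta$ finite'' and the orthogonality ``$L_\alpha\cap T_\beta$ finite'' coincide, and need only be verified once.

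The dense sets to be met are, for each $\alpha$, the sets $\{p\colon\alpha\in F_p\}$ and $\{p\colon|a^p_\alpha|\ge k\}$ (defining each $L_\alpha$ and making it infinite), for each $\alpha<\beta$ the sets forcing $L_\beta\setminus L_\alpha$ infinite (strictness of the tower), and, crucially, sets $D_\beta$ arranging $L_\xi\cap T_\beta\neq\emptyset$ for the active $\xi<\beta$; there are $\om_1$ of them in total. For $\sigma$-centeredness I would use $\om_1\le\con$: fixing an injection of $\om_1$ into $2^\om$, colour a condition $p$ by $n_p$ together with the finitely many initial segments — long enough to separate the indices of $F_p$ — of the reals coding those indices, decorated with the pattern of the sets $a^p_\alpha$. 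Two conditions of the same colour agree on their common indices, and their union is again a condition (the invariant is a union of per-pair conditions on $\mathcal T$, and below the common threshold everything is frozen), so every colour class is centred.

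The hard part will be the bookkeeping that keeps $(L_\alpha)$ a genuine tower while making each $f(\beta)$ finite, because the two demands pull against each other: a witness $w\in L_\xi\cap T_\beta$ must lie in $L_\xi\setminus L_\beta$, and such witnesses threaten to accumulate in $L_\xi\cap T_\gamma$ for $\gamma>\beta$ unless they ``escape'' the tower quickly, i.e.\ unless $\{\beta<\gamma\colon w_{\xi,\beta}\in T_\gamma\}$ is finite for each $\gamma$. The decisive point is that a witness for the pair $(\xi,\beta)$ should be inserted at a moment when $\beta$ is the largest active index: using that $T_\beta\setminus T_\gamma$ is infinite for $\gamma<\beta$, one can choose $w\in T_\beta\setminus T_\xi$ outside every active $T_\gamma$ with $\xi<\gamma<\beta$, so that $w$ propagates cleanly and enlarges no other difference. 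Showing that the $D_\beta$ are genuinely dense under this constraint — and that, conversely, the coherence invariant together with the ``only points of $\om\setminus T_{\max F_p}$ above the threshold'' rule forces each difference $a_\xi\setminus a_\gamma$ to stay bounded, so that every $L_\xi\setminus L_\gamma$ (equivalently every $L_\xi\cap T_\gamma$) remains finite — is the main obstacle, and is exactly where a counting argument in the style of Lemma~\ref{single_pairs}, bounding how many witnesses a fixed pair can acquire, is required. Once this is established, $\mathsf{MA}(\sigma\text{-centered})$ applied to the $\om_1$ dense sets yields the tower $(L_\alpha)$, and the set-mapping reduction of the first paragraph completes the proof.
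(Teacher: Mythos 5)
Your overall frame (force the left half by a $\sigma$-centered poset, apply $\mathsf{MA}(\sigma\text{-centered})$ to $\om_1$ dense sets, and note that (O) implies (K) implies gap) is right, but the proposal has a genuine gap at its center, and the design choice that creates it is exactly where you diverge from Spasojević's actual forcing, presented in the paper as Example~\ref{David}. There, the orientation is part of \emph{being a condition}: one requires $L^\alpha_p \cap R_\beta \neq \emptyset$ for all $\alpha < \beta \in F_p$, together with the normalization $R_\alpha \setminus R_\beta \sub n_p$, and the propagation rule $L^\alpha_q \cap [n_p,n_q) \sub L^\beta_q$ admits \emph{no} exceptions. Consequently each difference $L_\alpha \setminus L_\beta$ is frozen below $n_p$ the first time $\alpha,\beta$ coexist in a condition (finiteness of differences is automatic), and witnesses for a pair $(\xi,\delta)$ are created once and for all at the moment $\delta$ enters the domain as a new maximal index, by choosing $w \in R_\delta \setminus R_{\max F_p}$ above $n_p$; the normalization guarantees such $w$ avoids every active $R_\gamma$, so no further witness dense sets are needed. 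Your scheme instead permits witness insertion for \emph{already-active} pairs via the propagation-with-exceptions rule, and there the obstruction is concrete: if $\gamma > \beta$ is already active, a witness $w \in T_\beta$ for the pair $(\xi,\beta)$ must avoid $T_\gamma$ to keep $a_\xi \setminus a_\gamma$ from growing, but $T_\beta \setminus T_\gamma$ is \emph{finite} (since $T_\beta \sub^* T_\gamma$) and may lie entirely below the frozen threshold; hence your sets $D_\beta$ are not obviously dense, and your proposed fix --- insert the witness for $(\xi,\beta)$ only ``when $\beta$ is the largest active index'' --- is incompatible with density, since $D_\beta$ must be met below conditions whose domains already exceed $\beta$ (the same middle-insertion problem reappears whenever an index is activated below an already-active one). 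You flag precisely this as ``the main obstacle'' and defer it to an unspecified counting argument in the style of Lemma~\ref{single_pairs}; but that verification is the heart of the theorem, not a detail, so the proposal as written is not a proof.

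Two further points. Your $\sigma$-centeredness argument claims that two same-colored conditions have their \emph{union} as a common extension, but the invariant $a_\alpha \setminus a_\beta \sub T_\beta$ at cross pairs $\alpha \in F_p$, $\beta \in F_q$ refers to the actual ground-model sets $T_\beta$, which finitely many initial segments of reals coding the indices cannot capture; same-colored conditions generally need to be amalgamated by \emph{extending} and adding new points, and in your framework that runs into the same finite-set obstruction ($T_\beta \setminus T_\gamma$ possibly exhausted below the threshold). The paper does not reprove centeredness either, citing \cite[Proposition 8]{spasojevic1}, but that proof depends on the shape of the real forcing, not yours. Finally, your Hajnal free-set detour (modeled on Fact~\ref{Hausdorff is left oriented}) is sound as logic but buys you nothing: in the actual construction condition (O) holds outright for \emph{all} pairs of the generic gap, with no extraction of a subgap; it is precisely your weakening of the target to ``each $f(\beta)$ is finite'' that forces the problematic per-pair witness dense sets into the picture.
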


We present ideas behind the proof of the above theorem in Section \ref{gaps-towers} (see Example \ref{David}). 


\section{Suslin towers}\label{suslin-towers}


We know that consistently there are no Suslin $\om_1$-towers. However, Suslin towers, perhaps longer than $\om_1$, always exist:

\begin{prop}\label{b-towers}
	There is a tower $\mathcal{T} = \left(T_\alpha\right)_{\alpha < \mathfrak{b}}$ which is Suslin.
\end{prop}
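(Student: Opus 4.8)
The plan is to construct a tower of length $\mathfrak{b}$ directly from a $\leq^*$-unbounded, $\leq^*$-increasing scale, and then show that its being Suslin is forced by the fact that any uncountable $\sub$-antichain inside it would produce an unbounded family of size strictly below $\mathfrak{b}$, which is impossible. First I would fix a $\leq^*$-increasing sequence $(f_\alpha)_{\alpha<\mathfrak{b}}$ in $\om^\om$ which is $\leq^*$-unbounded (such a sequence of length exactly $\mathfrak{b}$ exists by the standard characterization of $\mathfrak{b}$ as the least length of an unbounded well-ordered $\leq^*$-chain). As in the scale example discussed in Section \ref{basic}, I would set
\[ T_\alpha = \left\{(n,m)\in \om\times\om \colon m\leq f_\alpha(n)\right\}, \]
identifying $\om\times\om$ with $\om$. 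The relation $f_\alpha <^* f_\beta$ for $\alpha<\beta$ guarantees $T_\alpha \sub^* T_\beta$, and the strictness (that the $f_\alpha$ are not eventually equal) guarantees $T_\beta \setminus T_\alpha$ is infinite, so this is a genuine tower of length $\mathfrak{b}$.

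Next I would argue that $\mathcal{T}$ is Suslin, i.e.\ that no cofinal subtower satisfies condition (K). Equivalently, I need to show that every cofinal $X\sub\mathfrak{b}$ contains $\alpha<\beta$ with $T_\alpha \sub T_\beta$. Suppose toward a contradiction that some uncountable (indeed, cofinal) $X$ indexes a $\sub$-antichain; since the tower is $\sub^*$-increasing, $\alpha<\beta$ in $X$ forces $T_\alpha\sub^* T_\beta$, so $T_\alpha \nsubseteq T_\beta$ means $T_\alpha\sm T_\beta$ is a nonempty finite set, i.e.\ there are finitely many columns $n$ on which $f_\alpha(n)>f_\beta(n)$. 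The key point is that an antichain of size less than $\mathfrak{b}$ in this tower would let me build a dominating-type contradiction: if $\{T_\alpha\colon \alpha\in X\}$ is a $\sub$-antichain with $|X|<\mathfrak{b}$, then I would show $\{f_\alpha\colon\alpha\in X\}$ is unbounded, contradicting $|X|<\mathfrak{b}$. Indeed, given any $g\in\om^\om$, I would use the antichain condition to locate, for each $\alpha$, a witness $n_\alpha$ with $f_\alpha(n_\alpha)$ large relative to the successor in $X$; tracking these witnesses across the cofinal chain forces some $f_\alpha$ to escape $g$ on infinitely many coordinates.

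The main obstacle, and the step needing the most care, is making the counting argument precise: condition (K) failing means I must find $\alpha<\beta$ in \emph{every} cofinal $X$ with $T_\alpha\sub T_\beta$, and translating ``$\sub$-antichain'' back into an unboundedness statement requires handling the finitely-many-bad-columns phenomenon uniformly. I expect the cleanest route is to refine any putative antichain $X$ so that the finite set $T_\alpha\sm T_{\alpha^+}$ (where $\alpha^+$ is the $X$-successor of $\alpha$) has controlled behaviour, then patch the witnessing columns into a single function dominating no member of the refined family, contradicting that $\{f_\alpha\colon\alpha<\mathfrak{b}\}$ is unbounded precisely at length $\mathfrak{b}$. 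Because the hypothesis gives an antichain of full length $\mathfrak{b}$, the unboundedness it produces is genuine, and the contradiction closes the argument that $\mathcal{T}$ is Suslin.
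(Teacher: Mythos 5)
Your construction is exactly the paper's: both take a $\leq^*$-increasing unbounded sequence $(f_\alpha)_{\alpha<\mathfrak{b}}$ and form the tower of subgraph regions $T_\alpha=\{(m,n)\colon n\leq f_\alpha(m)\}$. But your argument for Suslinity has a genuine gap, and indeed its logical shape is backwards. The heart of the matter is the implication: \emph{if $\{f_\alpha\colon\alpha\in K\}$ is $\leq^*$-unbounded, then there exist $\alpha<\beta$ in $K$ with $f_\alpha(m)\leq f_\beta(m)$ for every $m$} (equivalently $T_\alpha\sub T_\beta$). This is a nontrivial theorem of Todor\v{c}evi\'c on oscillations of functions, and the paper's proof consists of quoting it: any cofinal $K\sub\mathfrak{b}$ gives an unbounded subfamily (automatic, since the sequence is $\leq^*$-increasing and unbounded and $\mathfrak{b}$ is regular), hence a pointwise-dominated pair, hence no cofinal subtower satisfies condition (K). You neither cite this fact nor prove it; your ``witness tracking'' sketch instead tries to derive \emph{unboundedness} of the subfamily \emph{from} the antichain condition, which is the reverse implication and in any case proves nothing new, since unboundedness of every cofinal subfamily holds trivially and regardless of any antichain hypothesis.

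Concretely, your proposed contradiction cannot arise: you say an antichain $X$ would be unbounded, ``contradicting $|X|<\mathfrak{b}$'' --- but the putative antichain witnessing non-Suslinity is cofinal in $\mathfrak{b}$, hence of size exactly $\mathfrak{b}$ ($\mathfrak{b}$ is regular), and a family of size $\mathfrak{b}$ being unbounded is not a contradiction; it is the defining property of the scale you started with. You concede this yourself (``the unboundedness it produces is genuine'') and then assert that ``the contradiction closes the argument,'' but at that point no statement has been contradicted. What is actually needed, and what your refine-and-patch outline never supplies, is a combinatorial argument that an unbounded $\leq^*$-increasing family cannot be a pointwise antichain --- i.e.\ that the finitely many ``bad columns'' $\{m\colon f_\alpha(m)>f_\beta(m)\}$ cannot be nonempty for all pairs $\alpha<\beta$ from a cofinal set. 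Without importing the oscillation theorem (the paper's reference \cite{oscillations-functions}) or reconstructing its proof, your argument does not close.
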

\begin{proof}
Let $\left\{f_\alpha\colon \alpha<\mathfrak{b}\right\} \sub \om^\om$ be a $\leq^*$-unbounded family which is $\leq^*$-strictly increasing. Define
\[ T_\alpha = \left\{(m,n)\colon n\leq f_\alpha(m)\right\} \]
for every $\alpha$. This is a $\mathfrak{b}$-tower (on $\om\times\om$). If $K\sub \mathfrak{b}$ is cofinal, then $\{f_\alpha\colon \alpha\in K\}$ is $\leq^*$-unbounded, and thus there is $\alpha<\beta$, $\alpha$, $\beta\in K$ such that $f_\alpha(m)\leq f_\beta(m)$ for each $m< \om$ (see \cite{oscillations-functions}). 
Therefore $T_\alpha \sub T_\beta$ and $(T_\alpha)_{\alpha<\mathfrak b}$ is a Suslin tower.
\end{proof}

The above fact and Theorem \ref{PID} may suggest that the existence of a Suslin $\om_1$-tower is equivalent to $\mathfrak{b}=\om_1$ in $\mathsf{ZFC}$. This is not the case.

\begin{prop}\label{s-centered-Suslin}
Let $\kappa$ be an uncountable regular cardinal.
It is consistent that $\mathfrak{b}=\kappa$ and there is a Suslin $\om_1$-tower.
\end{prop}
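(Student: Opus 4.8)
The plan is to start in a model $V\models\CH$ and drive $\bb$ up to $\ot$ by a finite support iteration of Hechler forcing, while keeping a fixed ground model Suslin tower Suslin throughout. Under $\CH$ we have $\bb=\om_1$, so Proposition~\ref{b-towers} already supplies a Suslin $\om_1$-tower $\mathcal T=(T_\alpha)_{\alpha<\om_1}$ living in $V$. The crucial point is that $\mathcal T$ consists of fixed subsets of $\om$, so it is literally the same tower in every outer model and the relation $T_\alpha\sub T_\beta$ is absolute. First I would record the reformulation that a tower is special exactly when it has an uncountable $\sub$-antichain (every uncountable $A\sub\om_1$ is cofinal, so a cofinal subtower with condition~(K) is the same as an uncountable set of pairwise $\sub$-incomparable levels). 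Thus it suffices to show that a $\sigma$-centered poset cannot add a new uncountable $\sub$-antichain to a ground model tower that has none in $V$.

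The preservation lemma is the heart of the argument. Suppose $\mathbb Q=\bigcup_n Q_n$ with each $Q_n$ centered, and some condition forces $\dot A$ to be an uncountable antichain of $\mathcal T$. For uncountably many $\xi<\om_1$ choose $q_\xi$ forcing $\xi\in\dot A$, with $q_\xi$ lying in a single piece $Q_{n(\xi)}$; thinning to a fixed index gives an uncountable $B\sub\om_1$ with all $q_\xi$ in one $Q_n$. For $\xi<\eta$ in $B$ the conditions $q_\xi,q_\eta$ are compatible, so a common extension forces $\{\xi,\eta\}\sub\dot A$, hence forces $T_\xi\nsubseteq T_\eta$; by absoluteness $T_\xi\nsubseteq T_\eta$ already holds in $V$. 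Therefore $\{T_\xi\colon\xi\in B\}$ is an uncountable $\sub$-antichain in $V$, contradicting that $\mathcal T$ is Suslin. So no $\sigma$-centered forcing over $V$ can add an uncountable antichain to $\mathcal T$.

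Next I would let $\mathbb P_{\ot}$ be the finite support iteration of Hechler forcing of length $\ot$; this is ccc, preserves cardinals, and forces $\bb=\dd=\con=\ot$ in the usual way. The key is that for every $\gamma<\ot$ the initial segment $\mathbb P_\gamma$ is $\sigma$-centered: since $|\gamma|\le\om_1$, the intermediate model $V^{\mathbb P_\gamma}$ still satisfies $\CH$, and a finite support iteration of $\sigma$-centered posets of length $\le\con^V=\om_1$ is $\sigma$-centered. Finally, if $A$ were an uncountable $\sub$-antichain of $\mathcal T$ in $V^{\mathbb P_{\ot}}$, then by ccc and the regularity of $\ot$ a nice name for $A\sub\om_1$ has support of size $\le\om_1$, so $A$ already belongs to $V^{\mathbb P_\gamma}$ for some $\gamma<\ot$; applying the preservation lemma to the $\sigma$-centered $\mathbb P_\gamma$ over $V$ contradicts Suslinness of $\mathcal T$ in $V$. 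Hence $\mathcal T$ remains Suslin in $V^{\mathbb P_{\ot}}$, where $\bb=\ot$, which is exactly what is claimed.

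I expect the main obstacle to be the bookkeeping guaranteeing that every proper initial segment $\mathbb P_\gamma$ is genuinely $\sigma$-centered, namely verifying that $\CH$ persists at all stages below $\ot$ so that the length of each $\mathbb P_\gamma$ stays $\le\con$, and then combining this with the absoluteness step cleanly. The remaining ingredients, the existence of the ground model Suslin $\om_1$-tower under $\CH$ and the computation $\bb=\ot$ in the Hechler extension, are standard.
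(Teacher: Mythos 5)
Your proof is correct in substance, and its engine is the same as the paper's: the paper also fixes a ground-model Suslin tower of length $\om_1$ (available by Proposition~\ref{b-towers} since $\mathfrak{b}=\om_1$ there), runs a finite support $\om_2$-iteration of Hechler forcing, and proves precisely your preservation lemma --- choosing $p_\alpha \Vdash \alpha \in \dot{X}$, thinning to an uncountable pairwise compatible (linked) family, applying Suslinness in $V$, and using absoluteness of $T_\alpha \sub T_\beta$; the paper even notes that $\sigma$-linked suffices. Where you genuinely diverge is the choice of ground model. The paper starts from $\mathfrak{b}=\om_1<\om_2=\mathfrak{c}$, so the \emph{entire} iteration has length $\om_2 \le \mathfrak{c}^V$, is therefore $\sigma$-centered by the folklore iteration lemma, and the preservation argument is applied once to the whole poset --- no reflection of names, no initial-segment analysis. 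You start from $\mathsf{CH}$, where the full iteration is \emph{not} $\sigma$-centered (its completion contains that of $\mathrm{Fn}(\om_2,2)$, which is not $\sigma$-centered when $\mathfrak{c}=\om_1$), and you compensate with the nice-name/regularity argument capturing any putative antichain in some $\mathbb{P}_\gamma$, $\gamma<\om_2$. That works, but it makes everything hinge on $\sigma$-centeredness of every $\mathbb{P}_\gamma$, and your stated justification is the weak point: the lemma as you quote it, for length at most $\mathfrak{c}^V=\om_1$, literally covers only $\gamma\le\om_1$, and the hinted reduction through intermediate $\mathsf{CH}$ models does not close --- at additively indecomposable $\gamma$ (e.g.\ $\gamma=\om_1\cdot\om_1$) writing $\gamma=\om_1+\delta$ forces $\delta=\gamma$, so a two-step/block induction never decreases. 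The repair is to invoke the folklore lemma in its cardinality form: finite support iterations of $\sigma$-centered posets of length of cardinality at most $\mathfrak{c}$ (equivalently, length $<\mathfrak{c}^+$) are $\sigma$-centered; this is proved by passing to the dense set of conditions deciding, for each coordinate of their support, the index of a centered piece containing that coordinate, and then using Hewitt--Marczewski--Pondiczery separability of $\om^\gamma$ for $|\gamma|\le\mathfrak{c}$ to classify these patterns by a countable dense set. With that citation your argument is complete. Net comparison: your route gets by with plain $\mathsf{CH}$ at the cost of a reflection step and a sharper form of the iteration lemma, while the paper buys a one-shot global argument by preparing the ground model with $\mathfrak{c}=\om_2$ in advance.
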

\begin{proof}
	Start with a model of $\mathfrak{b}=\om_1$ with a Suslin tower. 
	Then use a finite support iteration of Hechler forcings $\mathbb{H}$ (for adding a dominating real) of length $\kappa$. This will make $\mathfrak{b}=\kappa$ in the extension.  Hechler forcing is $\sigma$-centered and thus it possesses the Knaster property (i.e.\ for every
	uncountable $X\subseteq \mathbb{H}$ there is an
	uncountable linked $X_0 \subseteq X$), which is preserved in finite-support iterations. 

	We will prove that a forcing with the Knaster property does not destroy a Suslin tower $(T_\alpha)_{\alpha<\omega_1}$.
	(This also follows from the general well known fact that such forcing preserves ccc-ness of ground model relations.)
	Suppose that $\mathbb{P}$ is such a forcing, $p \in \mathbb P$ is any condition, and $\dot{X}$ is a $\mathbb{P}$-name for an uncountable subset of
	$\om_1$. 
	Consider 
	\[ X = \left\{\alpha<\om_1\colon \exists p_\alpha < p, p_\alpha \Vdash \alpha\in \dot{X}\right\}. \]
	There is 
	an uncountable $X_0\sub X$ such that $p_\alpha \parallel p_\beta$ for each $\alpha, \beta \in X_0$.
	The tower $(T_\alpha)_{\alpha<\om_1}$ is Suslin, hence there are distinct $\alpha, \beta \in X_0$ such that $T_\alpha \sub T_\beta$ and any $q<p_\alpha, p_\beta$
	forces that $\alpha, \beta \in \dot{X}$.
	Therefore the tower remains Suslin in the extension.
\end{proof}

The crux of Proposition \ref{b-towers} is Todor\v{c}evi{\'c}'s result on oscillations of functions. His work on oscillations of subsets of $\om$ in \cite{oscillations} sheds even more light on the existence of Suslin towers. Recall that an oscillation of
$A$, $B\sub \om$ (denoted by $\mathrm{osc}(A,B)$) is the cardinality of the set $A\bigtriangleup B/\sim$, where $\sim$ is the equivalence relation defined on $A\bigtriangleup B$ by 
\[ m \sim n \mbox{ iff } [n,m] \cap (A \bigtriangleup B) \sub A\setminus B \mbox{ or } [n,m] \cap (A \bigtriangleup B) \sub B\setminus A \]
 (We slightly abuse the notation treating $[n,m]$ as $[m,n]$ for $m<n$.) We say that a family $\mathcal{A}\sub \mathcal{P}(\om)$ \emph{realizes an oscillation $n$} if there are $A,B\in \mathcal{A}$ such that $\mathrm{osc}(A,B)=n$.

The following is a special case of~\cite[Corollary 2]{oscillations}.

\begin{thm}[Todor{\v{c}}evi{\'c}, \cite{oscillations}] \label{osci}
	If a family $\mathcal{A}\sub \mathcal{P}(\om)$ generates a non-meager P-ideal, then it realizes all finite oscillations.
\end{thm}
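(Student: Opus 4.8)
The plan is to use the standard combinatorial (Bartoszy\'nski-style) characterization of non-meagerness together with the decomposition of the generated ideal into countably many levels. Write $\mathcal I$ for the ideal generated by $\mathcal A$, and for $m<\om$ set $\mathcal I_m=\{X\sub\om : X\sub^* A_0\cup\dots\cup A_{m-1}\text{ for some }A_0,\dots,A_{m-1}\in\mathcal A\}$. Since $\mathcal I=\bigcup_{m<\om}\mathcal I_m$ is non-meager and a countable union of meager sets is meager, some level $\mathcal I_m$ is non-meager. The tool I would invoke is the classical fact that a set $\mathcal X\sub\Pw{\om}$ is non-meager if and only if for every partition of $\om$ into consecutive finite intervals $\langle I_k\rangle_{k<\om}$ and every target $\sigma$ with $\sigma_k\sub I_k$ there is $X\in\mathcal X$ with $X\cap I_k=\sigma_k$ for infinitely many $k$ (this is just the statement that a non-meager set meets the dense $G_\delta$ set $\{X : \exists^\infty k\ X\cap I_k=\sigma_k\}$). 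This characterization is not in the excerpt, so I would cite it as standard.

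Next I would fix $n$ and design the partition $\langle I_k\rangle$ together with a target $\sigma$ whose restriction to each $I_k$ already exhibits the block pattern I want to read off --- say $n$ short blocks separated by gaps that are long enough that the pattern cannot be accidentally coalesced by the behaviour of the sets on neighbouring intervals. Applying the characterization to the non-meager level $\mathcal I_m$ produces a single set $X\in\mathcal I_m$ with $X\cap I_k=\sigma_k$ for all $k$ in some infinite $K$. Because $X\sub^* A_0\cup\dots\cup A_{m-1}$ for finitely many generators $A_i\in\mathcal A$, on each matched interval $I_k$ the prescribed blocks are covered by the union of the $A_i$. A pigeonhole argument over the finitely many generators and the infinitely many indices $k\in K$ then isolates two generators $A,B\in\mathcal A$ whose interaction on an infinite subfamily of the $I_k$ is constant, and these would be my candidate witnesses for $\osc(A,B)=n$.

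The hard part --- and the step that genuinely needs the ideal to be non-meager rather than merely large --- is passing from local, infinitely-often control on the matched intervals to the \emph{exact} global value $\osc(A,B)=n$. The characterization only prescribes behaviour on infinitely many intervals, never on cofinitely many, so I cannot simply forbid stray blocks of $A\bigtriangleup B$ off the matched intervals; the count defining $\osc$ is global, so the partition, the target, and the pigeonhole extraction must all be chosen simultaneously so that the two selected generators are pinned to the designed pattern along a whole tail, and so that the finite discrepancies introduced by the $\sub^*$ (rather than $\sub$) in the definition of $\mathcal I_m$ are absorbed into the bounded part of $A\bigtriangleup B$. I expect this bookkeeping, rather than any single inequality, to be the main obstacle: it is exactly the point where the oscillation machinery of \cite{oscillations} does its work, and where one must be most careful to guarantee that the number of equivalence classes of $A\bigtriangleup B$ comes out to be precisely $n$ and not merely at least $n$.
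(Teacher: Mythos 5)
Your opening moves are sound: the decomposition $\mathcal I=\bigcup_m\mathcal I_m$ with one non-meager level, and the fact that a non-meager set must meet every dense $G_\delta$ of the form $\{X\colon \exists^\infty k\ X\cap I_k=\sigma_k\}$, are both correct. But the step you yourself flag as ``bookkeeping'' is in fact the entire theorem, and the mechanism you propose for it points in the wrong direction. Matching a designed pattern on \emph{infinitely many} intervals $I_k$, and then pigeonholing so that two generators $A,B$ interact ``constantly'' on an infinite subfamily of those intervals, gives you lower-bound information only: every interval on which the designed blocks are split between $A$ and $B$ contributes fresh alternation classes to $A\bigtriangleup B$, so generically your two witnesses will have $\osc(A,B)=\om$, not $n$. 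Exact finite oscillation requires the \emph{complement} of one small active region to contribute nothing at all to $A\bigtriangleup B/\!\sim$, i.e.\ a whole tail on which one witness is outright included in the other --- and a dense-$G_\delta$ meeting argument is structurally incapable of prescribing behaviour on a tail, as you observe. Nothing in your pigeonhole recovers this, so the proposal is a plan with its central step missing rather than a proof.

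For comparison: the paper does not reprove Todor\v{c}evi\'c's theorem either --- it quotes it and only sketches the oscillation-$1$ corollary (towers generating non-meager ideals are Suslin) via property ($\xi$) --- but that sketch shows exactly how the tail is tamed in \cite{oscillations}, and it is not by interval matching. One witness $D$ is drawn from a \emph{countable dense subfamily} $\DD$ of the tower (hereditary separability of $\mathcal P(\om)$), and directedness gives a single bound $T_\alpha$ with $D\sub^* T_\alpha$ for all $D\in\DD$; non-meagerness is used only to produce, for a fixed finite trace $t$, members $T_\beta$ with $T_\beta\cap m_1=t$ containing one \emph{long run} $[m_1,m)$. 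Choosing $D\in[t]\cap\DD$ with $D\setminus m\sub T_\alpha\sub^* T_\beta$ yields $D\sub T_\beta$ outright, so the tail contributes no oscillation whatsoever, and the exact value of $\osc$ is engineered purely by the finite patterns below $m_1$ (the negation of ($\xi$) is where your interval-partition characterization of meagerness actually enters, as the witness of meagerness). The missing idea in your proposal is precisely this tail-absorption through almost-inclusion into a single member of the family, with the second witness obtained by topological approximation from a countable dense subfamily rather than by pigeonhole among the covering generators.
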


Notice that if $A\subsetneq^* B$, then $A\sub B$ if and only if $\mathrm{osc}(A,B) = 1$. It follows that each tower generating a non-meager ideal is Suslin. 
We enclose here for the reader's convenience the sketch of the proof of the latter assertion (extracted from \cite{oscillations}):

\begin{proof} We will say that a tower $\mathcal{T} = (T_\alpha)_{\alpha<\kappa}$ has \emph{property ($\xi$)} 
	if for an arbitrarily large $n< \om$ there is $t\sub n$ such that for each $m>n$ there are arbitrarily large $\beta<\kappa$ with the properties
\begin{itemize}
	\item $T_\beta \cap n = t$;
	\item $[n, m)\sub T_\beta$.
\end{itemize}

\begin{claim} ($\xi$) Let $\mathcal{T}$ be a tower of size $\kappa$ of uncountable cofinality such that every cofinal subtower of $\mathcal T$ has property ($\xi$).
This $\mathcal{T}$ is a Suslin tower.
\end{claim}

This is basically \cite[Lemma 2]{oscillations}. 
Since $P(\om)$ is hereditary separable, we can fix a countable set $\mathcal D\sub \mathcal{T}$ dense in $\mathcal{T}$.
There is $\alpha<\kappa$ such that $D\sub^* T_\alpha$ for each $D\in \mathcal D$. Without loss of generality we
can assume that there is $m_0< \om$ such that $T_\alpha\setminus m_0 \sub T_\beta$ for every $\beta>\alpha$.
Using property ($\xi$) we can fix $m_1>m_0$ and $t\sub m_1$ such that for every $m>m_1$ there is $\beta>\alpha$
such that $T_\beta \cap m_1 = t$ and $[m_1,m) \sub T_\beta$.

Pick $D\in [t]\cap \mathcal D$. Fix $m>m_1$ such that $D\setminus m \sub T_\alpha$ and $\beta$ such that $[m_1,m) \sub T_\beta$.

Then 
\begin{itemize}
	\item $D\cap m_1 = t = T_\beta \cap m_1$;
	\item $D\cap [m_1,m) \sub [m_1,m) = T_\beta \cap [m_1,m)$;
	\item $D\setminus m \sub T_\alpha \setminus m_0 \sub T_\beta$.
\end{itemize}
Hence $D\sub T_\beta$.
\qed
\bigskip

It is enough to show that every tower which generates a non--meager ideal has property ($\xi$).
This is basically the beginning of the proof of \cite[Theorem 1]{oscillations} and the proof of \cite[Lemma 1]{oscillations}.
We may assume that for each finite $F\sub \omega$ the set $\{\alpha\colon F \sub T_\alpha\}$ is either empty or cofinal in $\mathcal{T}$. This is standard (since $[\omega]^{<\om}$ is countable). Then we argue \emph{a contrario}. Subsequently negating
($\xi)$ we obtain an increasing sequence of natural numbers $(n_k)_{k<\om}$ witnessing the fact that $\mathcal{T}$ generates a meager ideal.
\end{proof}

As a corollary we obtain many examples of Suslin towers. E.g.\ each tower generating a maximal ideal is Suslin.

In a somewhat similar manner (to Claim ($\xi$)) we can prove that adding a Cohen real adds a Suslin tower. This result is not a surprise, the proof mimics the well known argument used by Todor\v{c}evi\'{c} to show that Cohen reals produce destructible gaps. 

\begin{prop}\label{cohen-inclusion-tower}
Let $\left(T_\alpha\right)_{\alpha < \om_1}$ be a tower and let $c$ be a Cohen generic real in an extension.
Then $\left(T_\alpha \cap c\right)_{\alpha < \om_1}$ is a Suslin tower.
\end{prop}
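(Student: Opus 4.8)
The plan is to first confirm that $(T_\alpha \cap c)_{\alpha<\om_1}$ really is a tower in $V[c]$, and then to show it has no uncountable subtower with condition (K). For the tower part, observe $(T_\alpha \cap c)\sm(T_\beta \cap c) = (T_\alpha \sm T_\beta)\cap c$. If $\alpha \le \beta$ this is finite because $T_\alpha \sm T_\beta$ already is; if $\alpha>\beta$ then $T_\alpha \sm T_\beta$ is infinite, and a Cohen real meets every infinite ground-model set in an infinite set (for each infinite $A\in V$ and $n$, the conditions putting at least $n$ elements of $A$ into $c$ are dense), so the difference is infinite. Hence $(T_\alpha \cap c)\sm(T_\beta\cap c)$ is finite iff $\alpha\le\beta$, as required.

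For Suslinness I would mimic Todor\v{c}evi\'{c}'s density argument for destructible gaps, working over Cohen forcing $\coh$, which I represent as $2^{<\om}$. Note that a subtower indexed by an uncountable $X$ fails (K) as soon as there are $\gamma<\delta$ in $X$ with $(T_\gamma \cap c)\sub (T_\delta \cap c)$, i.e.\ $(T_\gamma\sm T_\delta)\cap c = \emptyset$. So it suffices to prove the following claim in $V$: for every name $\dot X$ and every condition $p_0$ forcing ``$\dot X$ is an uncountable subset of $\om_1$'' there are $q\le p_0$ and ordinals $\gamma<\delta$ with $q\Vdash \gamma,\delta\in\dot X$ and $q\Vdash (T_\gamma\sm T_\delta)\cap c=\emptyset$. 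Applying this below an arbitrary condition shows the displayed property is dense, hence forced; so no uncountable $X$ in $V[c]$ can give a (K)-subtower, which is exactly what Suslinness asserts.

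To prove the claim I would use that $p_0$ forces $\dot X$ unbounded to choose, for each $\alpha<\om_1$, a condition $p_\alpha\le p_0$ and an ordinal $\gamma_\alpha\ge\alpha$ with $p_\alpha\Vdash\gamma_\alpha\in\dot X$. Since $\coh$ is countable, by the pigeonhole principle a single condition $p^*$ equals $p_\alpha$ for uncountably many $\alpha$; because $\gamma_\alpha\ge\alpha$, the corresponding ordinals form an uncountable (cofinal) set $Y$ with $p^*\Vdash\gamma\in\dot X$ for every $\gamma\in Y$. Let $s = (p^*)^{-1}(1)$, the finite set that $p^*$ decides to be $c\cap\operatorname{dom}(p^*)$. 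The traces $T_\gamma\cap s$ range over the finitely many subsets of $s$, so a second pigeonhole yields an uncountable $Y'\sub Y$ on which $T_\gamma\cap s$ is a constant set $t$. Fix any $\gamma<\delta$ in $Y'$.

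Finally, $F := T_\gamma\sm T_\delta$ is finite by the tower property, and $F\cap s = (T_\gamma\cap s)\sm(T_\delta\cap s) = t\sm t = \emptyset$, so every element of $F$ below $\operatorname{dom}(p^*)$ already gets value $0$ from $p^*$. Extending $p^*$ to $q$ by assigning $0$ to all coordinates in $[\operatorname{dom}(p^*),\max F]$ then forces $c\cap F=\emptyset$, that is $q\Vdash (T_\gamma\sm T_\delta)\cap c = \emptyset$, while $q\le p^*$ still forces $\gamma,\delta\in\dot X$; this establishes the claim. The step I expect to need the most care is the double pigeonhole: first collapsing the uncountably many witnesses to a single Cohen condition $p^*$, and then forcing the finite trace $T_\gamma\cap s$ to be constant, so that the finite difference $T_\gamma\sm T_\delta$ is disjoint from the part of $c$ that $p^*$ has already decided and can therefore be avoided by a trivial extension of the generic.
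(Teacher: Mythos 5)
Your proof is correct and takes essentially the same route as the paper's: the paper likewise passes to a single condition forcing an uncountable ground-model set into $\dot{X}$ (your first pigeonhole, phrased there as ``we can assume $\dot{X}$ belongs to the groundmodel by taking a stronger condition''), stabilizes the finite traces of the $T_\gamma$ on the decided part of the condition (your second pigeonhole), and extends the condition by zeros past a bound on the finite set $T_\gamma \setminus T_\delta$ to force $T_\gamma \cap c \sub T_\delta \cap c$. The only differences are cosmetic: you verify the tower property in both directions and pigeonhole on $T_\gamma \cap (p^*)^{-1}(1)$ rather than on the full trace $T_\gamma \cap n$, neither of which changes the argument.
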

\begin{proof}
To see that the tower is not eventually constant notice that $c \cap \left(T_\beta \setminus T_\alpha\right)$ is infinite for each $\alpha<\beta < \om_1.$

Let $p \in \lxp{n}{2}$ be a Cohen condition and $\dot{X}$ be a name for an uncountable subset of~$\om_1.$ We can assume that $X = \dot{X}$ belongs to the ground model (by taking a stronger condition if necessary). 
Consider $\alpha<\beta$ ($\alpha$, $\beta \in X$) such that $T_\alpha \cap n = T_\beta \cap n$ and fix $m>n$ such that $T_\alpha \subset T_\beta \cup m.$
Extend $p$ to $q \in \lxp{m}{2}$ such that $q^{-1}(1) = p^{-1}(1).$ 
Now $q \Vdash T_\alpha \cap \dot{c} \sub T_\beta \cap \dot{c}.$
\end{proof}

This simple example is of some importance, since the resulting Suslin tower will be used in the next section to produce a special non-Hausdorff gap.
Notice also that intersecting a Cohen real with a gap gives us a destructible gap with both sides being Suslin towers. So it is possible to have Suslin towers which are far from being non-meager (whose orthogonal is not generated by a single set).

One way to add a tower generically is to use a standard technique inspired by Hechler's work \cite{Hechler}. It allows one to prove (see e.g.\ \cite[Theorem 5.8, Chapter 2]{frankiewicz}) that whenever $\mathcal{P}$ is a partial order, there is a forcing notion $\mathbb{P}$ such that $\mathbb{P} \Vdash `` \check{\mathcal{P}} \text{
embedds in } \pofin "$. It seems that whenever $\mathcal{P}$ is a partial order and $\mathcal{C}\sub \mathcal{P}$ is an uncountable chain, then
in Hechler's extension the embedding of $\mathcal{C}$ into $\pofin$ will be Suslin unless we impose some additional restrictions on the conditions of $\mathbb{P}$. We will try to justify this by examples below and in the next section.

\begin{example} The classical Hechler's forcing for adding a tower. \label{hechler-tower}

	A condition in $\mathbb{P}$ is a triple $p=\left(F_p,n_p,A_p\right)$ where $F_p \in [\om_1]^{<\om},$
	$n_p < \om$ and $A_p \sub F_p \times n_p.$
	For two conditions $p$, $q$ we use notation $p\cup q = \left(F_p\cup F_q, n_p \cup n_q, A_p \cup A_q\right).$
	A condition $q$ is stronger than $p$ if $n_p \leq n_q,$ $F_p\sub F_q,$
	$A_q \cap \left(F_p \times n_p\right) = A_p$
	and for each $\alpha,\beta \in F_p,$ $\alpha < \beta $ and $i\in \left[n_p, n_q\right)$ 
	\[\text{ if } (\alpha, i) \in A_q \text{, then } (\beta,i) \in A_q.\]


\begin{claim}
	$\mathbb{P}$ is ccc.
\end{claim}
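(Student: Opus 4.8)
The plan is to verify the countable chain condition directly: take an arbitrary uncountable family of conditions and produce two compatible members via a standard $\Delta$-system-plus-uniformization argument. The conceptual point I would exploit is that the extra monotonicity clause in the definition of $\leq$ refers only to the interval $[n_p,n_q)$, so if the common extension is chosen to keep the same length, that clause becomes vacuous and compatibility reduces to mere consistency of the two finite approximations.

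First I would take $\{p_\xi = (F_\xi, n_\xi, A_\xi)\colon \xi < \om_1\}$ and refine it several times, each time to an uncountable subfamily. Since $n_\xi < \om$, pigeonhole lets me assume $n_\xi = n$ is constant. Applying the $\Delta$-system lemma to $\{F_\xi\colon \xi<\om_1\}$, I may assume these form a $\Delta$-system with root $R$, so $F_\xi \cap F_\eta = R$ for distinct $\xi,\eta$. Finally, because $R\times n$ is finite and hence $\mathcal P(R\times n)$ is finite, I may assume $A_\xi \cap (R\times n)$ is one fixed set; thus $A_\xi$ and $A_\eta$ agree on $(F_\xi\cap F_\eta)\times n$ for all distinct $\xi,\eta$.

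Now I would fix any $\xi\neq\eta$ and set $r = (F_\xi\cup F_\eta,\ n,\ A_\xi\cup A_\eta)$, which is a legitimate condition since $A_\xi\cup A_\eta \sub (F_\xi\cup F_\eta)\times n$. To see $r\leq p_\xi$ (and symmetrically $r\leq p_\eta$): the clauses $n_\xi = n = n_r$ and $F_\xi\sub F_r$ are immediate; for the restriction clause, $A_\eta\sub F_\eta\times n$ gives $A_\eta\cap(F_\xi\times n) = A_\eta\cap(R\times n) = A_\xi\cap(R\times n)\sub A_\xi$, whence $A_r\cap(F_\xi\times n) = A_\xi$; and the monotonicity clause quantifies over $i\in[n_\xi,n_r)=[n,n)=\emptyset$, so it holds vacuously. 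Thus $r$ witnesses $p_\xi\parallel p_\eta$, and $\mathbb P$ is ccc.

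The only place where anything could go wrong is the restriction clause $A_r\cap(F_\xi\times n) = A_\xi$: without the uniformization of $A_\xi\cap(R\times n)$ two conditions could disagree on the root and refuse to amalgamate. That is the single substantive step, and the $\Delta$-system refinement is exactly what secures it. By contrast, the monotonicity clause — which is what makes the generic object a $\sub^*$-chain and would be genuinely delicate to preserve across a nonempty interval $[n,n_r)$ — is sidestepped entirely by not extending the length. Notably, no tower plays any role in this verification; the argument is purely combinatorial on the finite approximations.
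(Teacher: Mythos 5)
Your proof is correct and is essentially the paper's own argument: the same sequence of refinements (constant $n_\xi$ by pigeonhole, a $\Delta$-system on the $F_\xi$, and uniformizing $A_\xi$ on the root times $n$), followed by taking the union of two conditions as the common extension --- exactly the paper's $p_\alpha \cup p_\beta$. Your explicit remark that keeping the length equal to $n$ makes the monotonicity clause over $[n_p,n_q)$ vacuous is precisely the (unstated) reason the paper's amalgamation works.
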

	Fix a set of conditions $\{p_\alpha \colon \alpha < \om_1\}.$ Use the $\Delta$-lemma to find an uncountable set $I$ such that $\{F_{p_\alpha} \colon \alpha \in I\}$ forms a $\Delta$-system with core $\Delta$ and $n_{p_\alpha}$ is constant for $\alpha
	\in I.$ We can further refine $I$ to an uncountable $I'$ so that $A_{p_\alpha} \cap (\Delta \times n_{p_\alpha})$ is constant.
	Now for each $\alpha,\beta \in I'$ the conditions $p_\alpha$ and $p_\beta$ are compatible since $p_\alpha \cup p_\beta$ is their common extension.\qed
	
\medskip
	Let $G$ be a generic filter. Put $A = \bigcup_{p \in G} A_p.$
	For $\alpha < \om_1$ define \[T_\alpha = \{i < \om \colon (\alpha, i)\in A \}.\]
\begin{claim}
	$\left(T_\alpha\right)_{\alpha<\om_1}$ is a Suslin tower.
\end{claim}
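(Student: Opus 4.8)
The plan is to check two things: that $(T_\alpha)_{\alpha<\om_1}$ is genuinely a tower, and that it has no uncountable $\sub$-antichain. For an $\om_1$-tower the latter is exactly the Suslin property, since every uncountable subtower is automatically cofinal and condition (K) for a subtower is precisely the assertion that it is a $\sub$-antichain. So it suffices to show: for every name $\dot X$ for an uncountable subset of $\om_1$ and every $p\in\mathbb P$ there are $q\le p$ and $\alpha<\beta$ with $q\Vdash\alpha,\beta\in\dot X$ and $q\Vdash T_\alpha\sub T_\beta$; this makes the set of conditions forcing ``$\dot X$ is not a $\sub$-antichain'' dense, hence the tower has no uncountable antichain in any extension.

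The tower verification is short and isolates the key feature of the ordering, namely that membership propagates upward. If $p\in G$ with $\alpha,\beta\in F_p$ and $\alpha<\beta$, then for $i\ge n_p$ the clause ``if $(\alpha,i)\in A_q$ then $(\beta,i)\in A_q$'' in the definition of $\le$ forces $i\in T_\alpha\Rightarrow i\in T_\beta$, so $T_\alpha\setminus T_\beta\sub n_p$ and $T_\alpha\sub^* T_\beta$. For strictness, given $\alpha<\beta$ and any $N$, extend any condition by a single new level $i\ge\max(N,n_p)$ whose fibre in $A_q$ is exactly $\{(\xi,i)\colon \xi\ge\beta,\ \xi\in F_p\}$; this respects $\le$ (the monotonicity clause holds at level $i$ and is vacuous at the empty intermediate levels) and forces $i\in T_\beta\setminus T_\alpha$. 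By genericity $T_\beta\setminus T_\alpha$ is infinite for $\alpha<\beta$, so $(T_\alpha)_{\alpha<\om_1}$ is a tower.

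For the Suslin part, first use that $p$ forces $\dot X$ unbounded to build conditions $q_\gamma\le p$ and ordinals $\alpha_\gamma$ with $\sup_\gamma\alpha_\gamma=\om_1$, $\alpha_\gamma\in F_{q_\gamma}$ and $q_\gamma\Vdash\alpha_\gamma\in\dot X$. Then thin the family exactly as in the proof of ccc so that $n_{q_\gamma}=n$ is constant, the $F_{q_\gamma}$ form a $\Delta$-system with core $\Delta$ and constant size $k$, the position of $\alpha_\gamma$ in the increasing enumeration of $F_{q_\gamma}$ is a fixed index (necessarily outside the core, since the $\alpha_\gamma$ are unbounded), and the ``shape'' of $A_{q_\gamma}$ is constant, i.e.\ whether the $l$-th element of $F_{q_\gamma}$ is paired with $i<n$ depends only on $(l,i)$ (so in particular $A_{q_\gamma}\cap(\Delta\times n)$ is constant). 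Now pick $\gamma,\delta$ with $\alpha:=\alpha_\gamma<\alpha_\delta=:\beta$. As in the ccc verification, the $\Delta$-system property together with the constant core part makes $r:=q_\gamma\cup q_\delta$ a common extension, so $r\le p$ and $r\Vdash\alpha,\beta\in\dot X$. Since $\alpha,\beta$ lie outside the core, $r$ decides $T_\alpha\cap n=\{i<n\colon(\alpha,i)\in A_{q_\gamma}\}$ and $T_\beta\cap n=\{i<n\colon(\beta,i)\in A_{q_\delta}\}$, and these agree by the constant-shape condition; hence $r\Vdash T_\alpha\cap n=T_\beta\cap n$. Above $n$, the monotonicity clause applied to the pair $\alpha<\beta\in F_r$ forces $i\in T_\alpha\Rightarrow i\in T_\beta$ for all $i\ge n$, exactly as in the tower argument. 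Therefore $r\Vdash T_\alpha\sub T_\beta$, which is the desired conclusion.

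I expect the only real bookkeeping to be arranging the refinement so that the finite initial parts of $T_\alpha$ and $T_\beta$ literally coincide (not merely almost coincide) while keeping $r=q_\gamma\cup q_\delta$ a legitimate common extension; but this is the same $\Delta$-system argument already used to prove ccc. The genuinely new ingredient, that the ordering's upward-propagation clause supplies the tail inclusion $T_\alpha\setminus n\sub T_\beta$ for $\alpha<\beta$ with no extra effort, is precisely what reduces the problem to matching finite initial segments.
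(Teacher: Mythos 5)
Your proposal is correct and follows essentially the same route as the paper: given a name $\dot X$, collect uncountably many conditions $p_\alpha\Vdash\alpha\in\dot X$ with $\alpha\in F_{p_\alpha}$, refine by the same $\Delta$-system argument used for ccc with the extra stipulation that $A_{p_\alpha}\cap(\{\alpha\}\times n)$ is constant, and observe that $p_\alpha\cup p_\beta$ is a common extension forcing $T_\alpha\sub T_\beta$ via the upward-propagation clause above $n$ together with the matched initial segments below $n$. Your added density verification that the generic family is a strictly $\sub^*$-increasing tower is a routine detail the paper leaves implicit, not a difference in method.
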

	It is obvious that $\left(T_\alpha\right)_{\alpha<\om_1}$ is non-constant.
	Consider a name $\dot{X}$ for an uncountable subset of $\om_1$ and a condition $p.$ There is an uncountable set 
	\[X = \left\{\alpha < \om_1 \colon \exists p_\alpha < p, \alpha \in F_{p_\alpha}, p_\alpha \Vdash \alpha \in \dot{X}\right\}.\]
	Now proceed in the same way as in the proof of the previous claim to get an uncountable set $I \subseteq X.$
	We may further suppose that $\left\{i<n_{p_\alpha}\colon (\alpha, i) \in A_{p_\alpha}\right\}$ 
	is constant for $\alpha \in I.$
	Hence~$p_\alpha \cup p_\beta < p_\alpha, p_\beta$ and 
	\[ p_\alpha \cup p_\beta \Vdash \left(\alpha,\beta \in \dot{X} \text{ and } 
	T_\alpha \sub T_\beta\right) \] for $\alpha, \beta \in I, \alpha<\beta.$ 
	\qed
\end{example}

The forcing in this example is in fact equivalent to the forcing adding $\om_1$ Cohen reals. In what follows we denote the latter by $\mathbb{C}_{\om_1}$.

\begin{prop} \label{equivalent-to-Cohen}
	$\mathbb{P}$ is equivalent to $\mathbb{C}_{\om_1}$.
\end{prop}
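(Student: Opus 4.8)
The plan is to identify the Boolean completion of $\mathbb{P}$ with the Cohen algebra of weight $\aleph_1$, by filtering $\mathbb{P}$ through its restrictions to initial segments of $\om_1$. For $X\sub\om_1$ write $\mathbb{P}_X=\{p\in\mathbb{P}\colon F_p\sub X\}$, and for $p\in\mathbb{P}$ put $p_X=(F_p\cap X,\,n_p,\,A_p\cap((F_p\cap X)\times n_p))$, a condition of $\mathbb{P}_X$. The key lemma I would prove is that $\mathbb{P}_X$ is a complete suborder of $\mathbb{P}$ with $p\mapsto p_X$ a reduction. Absoluteness of compatibility is immediate: if $p,p'\in\mathbb{P}_X$ have a common extension $s\in\mathbb{P}$, then $s_X\in\mathbb{P}_X$ is again a common extension (the coherence clause needed for $s_X\le p$ involves only ordinals of $F_p\sub X$, where $A_{s_X}$ agrees with $A_s$). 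The substantive point is that every $q\le p_X$ in $\mathbb{P}_X$ is compatible with $p$ in $\mathbb{P}$.

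To see this I would set $F_r=F_p\cup F_q$ and $n_r=n_q$ and build $A_r$ extending both $A_p$ (on $F_p\times n_p$) and $A_q$ (on $F_q\times n_q$); these agree on their overlap precisely because $q\le p_X$. The only undetermined cells are the $(\gamma,i)$ with $\gamma\in F_p\setminus F_q$ and $i\in[n_p,n_q)$, and I would fill them by a monotone threshold in each column: for such $i$ let $\tau_i$ be the least $\delta\in F_p\cap F_q$ with $(\delta,i)\in A_q$ (and $\tau_i=\infty$ if there is none), and declare $(\gamma,i)\in A_r$ for $\gamma\in F_p$ exactly when $\gamma\ge\tau_i$. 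The coherence clause applied to $q\le p_X$ guarantees that on $F_p\cap F_q$ the cells of $A_q$ already form an up-set in each such column, so the threshold assignment is consistent with the fixed values and makes $A_r$ monotone along $F_p$ in the columns $[n_p,n_q)$; this is exactly the coherence needed for $r\le p$, while the clause for $r\le q$ is vacuous as $n_r=n_q$. Hence $r\le p,q$. I expect this verification --- that the coherence built into the ordering is precisely what lets the undetermined columns be filled monotonically without conflict --- to be the main obstacle; everything else is bookkeeping.

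With the lemma in hand, the chain $\langle\mathbb{P}_\alpha\rangle_{\alpha<\om_1}$, where $\mathbb{P}_\alpha$ consists of the conditions $p$ with $F_p\sub\alpha$, is a continuous increasing chain of countable complete suborders of $\mathbb{P}$, with $\mathbb{P}_0$ trivial and $\bigcup_{\alpha<\om_1}\mathbb{P}_\alpha=\mathbb{P}$ since each $F_p$ is finite. At a successor step, forcing with $\mathbb{P}_{\alpha+1}$ over the $\mathbb{P}_\alpha$-generic, the quotient is driven by the fresh bits $(\alpha,i)$ (with coherence merely forcing $T_\alpha\sub^* T_\xi$ to fail in the tower-building direction for $\xi<\alpha$) and is a countable, atomless, separative poset; by the classical fact that every such poset is forcing-equivalent to Cohen forcing, each successor step adds a single Cohen real. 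Finally I would invoke the standard theory of Cohen algebras: a ccc complete Boolean algebra presented as the union of a continuous increasing $\om_1$-chain of complete subalgebras, starting from the trivial one and with every successor step a one-step Cohen extension, is the unique Cohen algebra of weight $\aleph_1$, namely the completion of $\mathbb{C}_{\om_1}$. Since $\mathrm{RO}(\mathbb{P})$ is such an algebra, $\mathbb{P}$ is forcing-equivalent to $\mathbb{C}_{\om_1}$. Alternatively one can assemble a dense isomorphism between $\mathbb{P}$ and $\mathbb{C}_{\om_1}$ directly by a back-and-forth through the countable stages, using that each $\mathrm{RO}(\mathbb{P}_\alpha)$ is the countable atomless Boolean algebra.
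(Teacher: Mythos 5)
Your proposal is correct and takes essentially the same route as the paper: the identical filtration $\mathbb{P}_\alpha=\{p\in\mathbb{P}\colon F_p\sub\alpha\}$, the identical restriction map $q\mapsto \left(F_q\cap\alpha,\,n_q,\,A_q\cap(\alpha\times n_q)\right)$ as pseudo-projection (the paper cites Balcar--Paz\'ak for this notion and Koppelberg's Main Theorem for exactly the chain-of-regular-subalgebras assembly you describe). Your threshold construction merely writes out the amalgamation step that the paper declares ``trivial to check,'' and it is correct.
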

\begin{proof}
Using \cite[Main Theorem]{Koppelberg}, it is enough to find a sequence of 
$(\mathbb{P}_\alpha)_{\alpha<\omega_1}$ such that
\begin{enumerate}
	\item $\mathbb{P}_\gamma = \bigcup_{\alpha<\gamma} \mathbb{P}_\alpha$ for each limit $\gamma\leq \omega_1$,
	\item\label{koppel-embeddings} for $\alpha<\beta$, $\mathbb{P}_\alpha$ is a complete suborder of $\mathbb{P}_\beta$,
	\item $\mathbb{P}_{\alpha+1}/\mathbb{P}_\alpha$ is equivalent to Cohen forcing.
\end{enumerate}

For $\alpha<\om_1$ let $\mathbb{P}_\alpha = \{(F,n,A)\in \mathbb{P}\colon F\sub \alpha\}$. 
Only checking (\ref{koppel-embeddings}) is non-trivial.

It is enough to show that for $\alpha < \beta \leq \om_1$ there is a pseudo-projection $p\colon\mathbb{P}_\beta \to \mathbb{P}_\alpha$ (see \cite[Proposition 2]{Balcar-Pazak}). 
I.e., we need to define for each $q = \left(F_q, n_q, A_q\right) \in \mathbb{P}_\beta$ a condition $p(q) \in \mathbb{P}_\alpha$ 
such that whenever $r<p(q), r \in \mathbb{P}_\alpha$ then $r$ is compatible with $q$ (in $\mathbb{P}$).
It is trivial to check that $p(q) = \left(F_q \cap \alpha, n_q, A_q \cap (\alpha \times n_q)\right)$ works.
\end{proof}

In what follows we will present several other incarnations of $\mathbb{C}_{\om_1}$ used for producing peculiar towers and gaps. 

\begin{example}\label{gen-haus} Hechler's forcing with the Hausdorff restriction.

	Consider a modification of the forcing from Example \ref{hechler-tower}.
	We add one additional requirement for $q<p.$
	Namely, for each $\alpha \in F_p$ and $\xi \in F_q \setminus F_p$, $\xi < \alpha$,
	there has to be some $i \geq  n_p$ such that $(\xi, i) \in A_q$ and $(\alpha, i) \notin A_q.$
	
	This forcing adds a generic tower $(T_\alpha)_{\alpha<\omega_1}$ satisfying condition (H) in the same way as forcing from Example~\ref{hechler-tower} adds a Suslin tower. As in Example \ref{hechler-tower} we can show that this forcing is equivalent to $\mathbb{C}_{\om_1}$ (and so is ccc),
	the same definition of $\mathbb{P}_\alpha$ pseudo-projections works also for this forcing.
	Notice however, that checking this is not as trivial as before (but not difficult either).

	Notice also that the tower added by this forcing is maximal (and so this is another example of a maximal Hausdorff tower, see
	Remark \ref{maximal-Hausdorff}). Indeed, let $P\subseteq \om$ be an infinite set from the
	extension. It is enough to check that $P$ intersects some $T_\alpha$ on an infinite set. 
	Because of ccc, the name for $P$ is guessed on some intermediate step 
	so we can forget about an initial segment of the tower, and assume that $P$ is from the ground model.
	Then the set \[ D_n = \left\{p\in \mathbb{P} \colon 0 \in F_p \ \exists m>n \ (0,m)\in A_p \mbox{ and } m\in P\right\} \]
	is dense in $\mathbb{P}$ for each $n$. This proves that $P \cap T_0$ is infinite in the extension.

\end{example}
\medskip

Probably the most interesting example of this sort is the following one.

\begin{example} \label{special-nonHaus} A special tower equivalent to a Suslin tower.
	
Let $\kappa$ be an uncountable regular cardinal. 
 We construct a forcing which adds a pair of equivalent towers of length $\kappa$,
 one of them being special and the other one Suslin (in a strong sense).
 
 A condition is a sequence $p=\left(F_p,n_p, \left\langle T_p^\alpha, S_p^\alpha \right\rangle_{\alpha \in F_p} \right),$
	where $F_p \in [\kappa]^{<\om},$
	$n_p < \om$, and $T^\alpha_p, S^\alpha_p \sub n_p$ for each $\alpha \in F_p$, and 
	 $S^\alpha_p \not\sub S^\beta_p$ for $\alpha < \beta \in F_p.$
 
 A condition $q$ is stronger than $p$ if $n_p \leq n_q,$ $F_p \sub F_q,$
	$T^\alpha_q \cap n_p = T^\alpha_p$, $S^\alpha_q \cap n_p = S^\alpha_p$ for $\alpha \in F_p$,
	and for each $\alpha,\beta \in F_p,$ $\alpha < \beta $ and $i \in [n_p, n_q)$ 
	\[\text{ if } i \in T^\alpha_q \cup S^\alpha_q \text{ then } i \in T^\beta_q \cap S^\beta_q \text{\quad and \quad}
	\text{ if } i \in T^\alpha_q \text{ then } i \in S^\alpha_q.\]
	
	It is easy to see that for each $\alpha < \kappa$ the set $\{p \colon \alpha \in F_p\}$ 
	is dense and hence this forcing adds a couple of equivalent towers of length $\kappa^V$
	$(T_\alpha)_{\alpha < \kappa}$ and $(S_\alpha)_{\alpha < \kappa}$ defined by $T_\alpha = \bigcup_{p \in G} T^\alpha_p$ and
	$S_\alpha = \bigcup_{p \in G} S^\alpha_p$ for $\alpha < \kappa.$
	
	The tower $(S_\alpha)_{\alpha < \kappa}$ satisfies condition (K). On the other hand $(T_\alpha)_{\alpha<\kappa}$ is far from being special.
	
\begin{claim}
 Every uncountable subtower of $(T_\alpha)_{\alpha < \kappa}$ is Suslin.
\end{claim}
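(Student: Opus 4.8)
The plan is to prove the stronger statement that, in the generic extension, every uncountable $X\sub\kappa$ contains ordinals $\alpha<\beta$ with $T_\alpha\sub T_\beta$; equivalently, $\{T_\alpha\colon\alpha<\kappa\}$ carries no uncountable $\sub$-antichain. This yields the Claim at once, since a special subtower of uncountable cofinality would contain a cofinal, hence uncountable, subtower with condition (K), i.e.\ an uncountable $\sub$-antichain. It suffices to show that for every condition $p$ and every name $\dot X$ forced to be an uncountable subset of $\kappa$ the conditions forcing ``$\dot X$ contains such a pair'' are dense below $p$. First I would build, by recursion on $\xi<\om_1$, conditions $q_\xi\leq p$ and increasing ordinals $\gamma_\xi\in F_{q_\xi}$ with $q_\xi\Vdash\gamma_\xi\in\dot X$. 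Then, thinning out, I may assume $n_{q_\xi}=n$ is constant; that the supports form a $\Delta$-system with root $\Delta$ and tails $F'_\xi=F_{q_\xi}\sm\Delta$ normalized by $\max\Delta<\min F'_\xi$ and $\max F'_\alpha<\min F'_\beta$ for $\alpha<\beta$ (the normalization used in the proof of Proposition~\ref{cccHausdorff}); that the shape is constant, i.e.\ listing $F_{q_\xi}$ increasingly, the sets $T$ and $S$ attached to its $i$-th element depend only on $i$; and that $\gamma_\xi$ occupies one fixed place $g$ inside its tail.

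Next I would pick any $\alpha<\beta$ and assemble a common extension $q\leq q_\alpha,q_\beta$ on $F_q=\Delta\cup F'_\alpha\cup F'_\beta$, aiming at two things: (a) $q$ is legal, that is $S_q^\zeta\nsubseteq S_q^\eta$ for all $\zeta<\eta$ in $F_q$; and (b) $T_q^{\gamma_\alpha}$ and $T_q^{\gamma_\beta}$ agree below $n_q$, so that together with $\gamma_\alpha<\gamma_\beta$ the generic monotonicity forces $T_{\gamma_\alpha}\sub T_{\gamma_\beta}$. By constant shape the $\gamma_\alpha$- and $\gamma_\beta$-columns are literally equal below $n$, so (b) holds at the base level, and (a) holds for every pair except possibly the cross pairs $\xi\in F'_\alpha$, $\eta\in F'_\beta$ (the only pairs joining the two tails, since $F'_\alpha<F'_\beta$). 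The decisive structural point is that the monotonicity clause of the ordering is imposed only on pairs lying inside a single old support $F_{q_\alpha}$ or $F_{q_\beta}$, never on a cross pair. Hence I am free to append, for each failing cross pair, a new level witnessing $S^\xi\nsubseteq S^\eta$: put this level into $S^\zeta$ for $\zeta\geq\xi$ in $F'_\alpha$ and for $\zeta>\eta$ in $F'_\beta$, and into every $T$-column strictly above these two thresholds. Each such level is upward closed within $F_{q_\alpha}$ and within $F_{q_\beta}$ separately, so $q$ still extends both, and since adding points to $S$-columns never destroys a previously obtained witness, (a) is secured.

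The main obstacle is to ensure that these added levels do not spoil (b), i.e.\ never enter $T^{\gamma_\alpha}$ while avoiding $T^{\gamma_\beta}$. The saving observation is a position count. A cross pair $(\xi,\eta)$ can fail at the base level, $S^\xi\sub S^\eta$, only when the place $j_1$ of $\xi$ in $F'_\alpha$ is at least the place $j_2$ of $\eta$ in $F'_\beta$: indeed if $j_1<j_2$ then the equal shapes and the legality of $q_\alpha$ (both places live in $F_{q_\alpha}$) already give $S^\xi\nsubseteq S^\eta$. The level I add for $(\xi,\eta)$ enters $T^{\gamma_\alpha}$ only when $g>j_1$, and then $g>j_1\geq j_2$ places $\gamma_\beta$ strictly above the $\beta$-threshold $\eta$ as well, so the same level lies in $T^{\gamma_\beta}$; thus (b) is preserved throughout.

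Consequently $q\leq q_\alpha,q_\beta$ forces $\gamma_\alpha,\gamma_\beta\in\dot X$ and $T_{\gamma_\alpha}\sub T_{\gamma_\beta}$ with $\gamma_\alpha<\gamma_\beta$. This establishes the required density, so in the extension every uncountable subset of $\kappa$ contains a $\sub$-comparable pair; hence no uncountable subtower of $(T_\alpha)_{\alpha<\kappa}$ can be an antichain, and every uncountable subtower is Suslin.
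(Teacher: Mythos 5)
Your overall strategy (show that below any condition it is dense to force a $\sub$-comparable pair of indices into $\dot X$) is exactly the paper's, and for $\kappa=\om_1$ your construction is correct: there the normalization you invoke is legitimate (only countably many pairwise disjoint tails can meet the countable set $\max\Delta$, and the tails can be chosen increasing), your position count $j_1\geq j_2$ for failing cross pairs is right, and your per-pair fresh levels give a tidier local version of the paper's staircase. The genuine gap is that the claim is stated for an arbitrary uncountable regular $\kappa$, and for $\kappa>\om_1$ your thinning step fails outright: you cannot arrange $\max\Delta<\min F'_\xi$, nor $\max F'_\alpha<\min F'_\beta$. Take $\kappa=\om_2$ and supports $F_{q_\xi}=\{\xi,\,\om_1,\,\om_1+\xi+1\}$ for $\xi<\om_1$: the root $\{\om_1\}$ sits strictly inside the span of every tail, and any two tails interleave, so no uncountable subfamily can be separated. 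Since $\dot X$ is only a name for an \emph{uncountable} (not $\kappa$-sized) subset of $\kappa$, you cannot escape by passing to more indices. This is precisely why the paper extracts a ``nice'' $\Delta$-system in which each support splits into interleaved blocks $F^0_\alpha\cup\Delta^0\cup F^1_\alpha\cup\Delta^1\cup\dots\cup F^{k-1}_\alpha\cup\Delta^{k-1}$ with a constant pattern, rather than a root-below-tails system.

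The interleaving also breaks your ``decisive structural point'' that the monotonicity clause never couples cross pairs. The two old supports share the core, and once core columns lie above parts of the tails, monotonicity propagates through them: a fresh level placed into an $S$-column of $F'_\alpha$ inside block $i$ must, to extend $q_\alpha$, enter $T\cap S$ of all core columns in $\Delta^i,\Delta^{i+1},\dots$, and then, to extend $q_\beta$, enter all $F'_\beta$-columns in blocks above $i$. So your per-pair levels are not freely placeable; they are forced into columns you wanted to keep clean, and the critical pair $(\gamma_\alpha,\gamma_\beta)$ — which occupies the \emph{same} position in the \emph{same} block, so that its legality witness $S^{\gamma_\alpha}_q\nsubseteq S^{\gamma_\beta}_q$ cannot come from the base level and must be manufactured at a new level while simultaneously keeping $T^{\gamma_\alpha}_q\sub T^{\gamma_\beta}_q$ — is never addressed by your rule, which mentions only the two tails and ignores the core. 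The paper resolves this global coupling with the graded interval assignment of lengths $i$, $i+1$, $i+2$ across blocks, with the special adjustment at block $J$ and position $M$ that makes $T^{\gamma_\alpha}_q=T^{\gamma_\beta}_q$ while leaving one extra point in $S^{\gamma_\alpha}_q$. Repairing your argument would require redoing both the position count and the level assignments blockwise through the shared core, which essentially reproduces the paper's construction.
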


Consider a name $\dot{X}$ for an uncountable subset of $\kappa$ and a condition $p.$ There is an~uncountable set 
\[X = \left\{\alpha < \kappa \colon \exists p_\alpha < p,\ \alpha \in F_{p_\alpha},\ p_\alpha \Vdash \alpha \in \dot{X} \right\}.\]
	
	Use the $\Delta$-lemma to find an uncountable set $I$ such that $\{F_{p_\alpha} \colon \alpha \in I\}$ 
	forms a `nice' $\Delta$-system with core $\Delta$.
	Each $F_{p_\alpha}, \alpha \in I$ is split into blocks 
	\[ F_{p_\alpha} = F^0_\alpha \cup \Delta^0 \cup F^1_\alpha \cup \Delta^1 \cup \ldots \cup F^{k-1}_\alpha \cup \Delta^{k-1},\]
	$\Delta = \bigcup \Delta^i,$ $\max F_\alpha^{i} < \min \Delta^i,$ $\max \Delta^{i-1} < \min F_\alpha^{i},$
	$\max F_\alpha^i < \min F_\beta^i$, and 
	\[ F_\alpha^i = \left\{\xi_0^i(\alpha) < \xi_1^i(\alpha) < \ldots <  \xi_{j(i)-1}^i(\alpha) \right\} \]
	for each $\alpha < \beta < \om_1$ ($\alpha$, $\beta \in I$) and $i < k.$
	($F^0_\alpha$ and some $\Delta^{i}$s may be empty, in that case disregard the required inequalities.)
	
	We may moreover assume that $T^\xi_{p_\alpha}$ and $S^\xi_{p_\alpha}$ are constant for any $\xi\in \Delta$, 
	that $n_{p_\alpha}$, $T_{p_\alpha}^{\xi^i_m(\alpha)}$ and $S_{p_\alpha}^{\xi^i_m(\alpha)}$ 
	are constant (ranging over $\alpha \in I$)
	for each $i<k, m<j(i)$, and that there are $J, M < \om$ such that 
	$\alpha = \xi^J_M(\alpha)$ for~$\alpha \in I$.

	Pick any $\alpha < \beta \in I$. Define condition $q$ by $F_q = F_{p_\alpha} \cup F_{p_\beta},$
	$n_q = n_{p_\alpha}+k+1$ and define
	\begin{itemize}

	\item for $i < J$ and $\chi \in F^i_\alpha \cup \Delta^i$ let 
		\[	T^\chi_q = T^\chi_{p_\alpha} \cup [n_{p_\alpha}, n_{p_\alpha} +i +1)\mbox{ and }S^\chi_q = S^\chi_{p_\alpha} \cup [n_{p_\alpha}, n_{p_\alpha} +i +1).\]
	
	\item for $i < J$ and $\chi \in F^i_\beta$ let 
		\[			T^\chi_q = T^\chi_{p_\beta} \cup [n_{p_\beta}, n_{p_\beta} +i)\mbox{ and }S^\chi_q = S^\chi_{p_\beta} \cup [n_{p_\beta}, n_{p_\beta} +i).\]
	
	\item for $i > J$ and $\chi \in F^i_\alpha \cup \Delta^i$ let 
		\[	T^\chi_q = T^\chi_{p_\alpha} \cup [n_{p_\alpha}, n_{p_\alpha} +i +2)\mbox{ and }S^\chi_q = S^\chi_{p_\alpha} \cup [n_{p_\alpha}, n_{p_\alpha} +i +2).\]

	\item for $i > J$ and $\chi \in F^i_\beta$ let 
		\[ T^\chi_q = T^\chi_{p_\beta} \cup [n_{p_\beta}, n_{p_\beta} +i +1)\mbox{ and }S^\chi_q = S^\chi_{p_\beta} \cup [n_{p_\beta}, n_{p_\beta} +i +1).\]

	\item for $\chi \in \Delta^J$ let 
		\[ T^\chi_q = T^\chi_{p_\alpha} \cup [n_{p_\alpha}, n_{p_\alpha} +J +2)\mbox{ and }S^\chi_q = S^\chi_{p_\alpha} \cup [n_{p_\alpha}, n_{p_\alpha} +J +2).\]

	\item for $m < M$ and $\chi = \xi^J_m(\alpha)$ let 
		\[ T^\chi_q = T^\chi_{p_\alpha} \cup [n_{p_\alpha}, n_{p_\alpha} +J +1)\mbox{ and }S^\chi_q = S^\chi_{p_\alpha} \cup [n_{p_\alpha}, n_{p_\alpha} +J +1).\]

	\item for $m < M$ and $\chi = \xi^J_m(\beta)$ let 
		\[ T^\chi_q = T^\chi_{p_\beta} \cup [n_{p_\beta}, n_{p_\beta} +J)\mbox{ and }S^\chi_q = S^\chi_{p_\beta} \cup [n_{p_\beta}, n_{p_\beta} +J).\]
	
	\item for $m > M$ and $\chi = \xi^J_m(\alpha)$ let 
		\[ T^\chi_q = T^\chi_{p_\alpha} \cup [n_{p_\alpha}, n_{p_\alpha} +J +2)\mbox{ and }S^\chi_q = S^\chi_{p_\alpha} \cup [n_{p_\alpha}, n_{p_\alpha} +J +2).\]

	\item for $m > M$ and $\chi = \xi^J_m(\beta)$ let 
		\[ T^\chi_q = T^\chi_{p_\beta} \cup [n_{p_\beta}, n_{p_\beta} +J +1)\mbox{ and }S^\chi_q = S^\chi_{p_\beta} \cup [n_{p_\beta}, n_{p_\beta} +J +1).\]

	\item for $\chi = \xi^J_M(\alpha)$ let
		\[ T^\chi_q = T^\chi_{p_\alpha} \cup [n_{p_\alpha}, n_{p_\alpha} +J +1)\mbox{ and }S^\chi_q = S^\chi_{p_\alpha} \cup [n_{p_\alpha}, n_{p_\alpha} +J +2).\]
	
	\item for $\chi = \xi^J_M(\beta)$ let
		\[ 	T^\chi_q = T^\chi_{p_\beta} \cup [n_{p_\beta}, n_{p_\beta} +J +1)\mbox{ and }S^\chi_q = S^\chi_{p_\beta} \cup [n_{p_\beta}, n_{p_\beta} +J +1).\]
\end{itemize}

	To show that $q$ is a condition of $\mathbb{P}$ is straightforward. 
	Condition $q$ is a common extension of both $p_\alpha$ and $p_\beta$,
	$q \Vdash \alpha,\beta \in \dot{X}$. and 
	$q \Vdash \dot{T_{\alpha}} \subseteq \dot{T_{\beta}}.$
	\qed
	\medskip


	The proof of the claim also shows that the forcing is ccc.
	In case $\kappa = \om_1$ this forcing is equivalent to $\mathbb{C}_{\om_1}$. To check this, use the same strategy as in the proof of Proposition~\ref{equivalent-to-Cohen}.
	Define $\mathbb{P}_\alpha = \{q\in \mathbb{P}\colon F_q \sub \om\cdot \alpha\}$ for $\alpha < \om_1$. 
    For $\gamma < \beta$  and $q \in \mathbb{P}_\beta$ define a pseudo-projection $p(q) \in \mathbb{P}_\gamma$ as follows.
	First find a set $F \sub \om \cdot \gamma$ such that $|F| = |F_q \setminus (\om \cdot \gamma)|$ and $F_q \cap (\om \cdot \gamma) < F$,
	and fix an order preserving bijection $b\colon F_q \setminus (\om \cdot \gamma) \to F$.
	Define \[ p(q)=\left(F_{p(q)} = \left( F_q \cap (\om \cdot \gamma)\right) \cup F,n_q, \left\langle T_{p(q)}^\alpha, S_{p(q)}^\alpha \right\rangle_{\alpha \in F_{p(q)}} \right),\]
	where \[\left(T_{p(q)}^\alpha, S_{p(q)}^\alpha\right) = \left(T_{q}^\alpha, S_{q}^\alpha\right)\] for $\alpha \notin F$, 
	and \[ \left(T_{p(q)}^\alpha, S_{p(q)}^\alpha\right) = \left(T_{q}^{b^{-1}(\alpha)}, S_{q}^{b^{-1}(\alpha)}\right)\] for $\alpha \in F$. 
	
	We will sketch the proof that $p(q)$ is a pseudo-projection. Suppose that $r<p(q)$ and $r\in \mathbb{P}_\gamma$. We want to find $s\in \mathbb{P}_\beta$ such that $s<r$ and $s<q$. Let $F_s = F_r \cup F_q$, $n_s = n_r+1$. 
	For $\eta\in F_q \setminus (\om\cdot \gamma)$ set $T^\eta_s = T^{b(\eta)}_r$. 
	If $\xi\leq\max(F_q\cap (\om\cdot\gamma))$ or $\xi\geq  \om\cdot\gamma$ let $n_r\notin T^\xi_s \cup S^\xi_s$ and for $\xi\in (\max(F_q\cap (\om\cdot\gamma)),\om\cdot\gamma)$ let $n_r\in T^\xi_s\cap S^\xi_s$. 
	We have to show that for each $\xi\in F_r$ and $\eta\in F_q\setminus (\om\cdot \gamma)$ we have $S^\xi_s \nsubseteq S^\eta_s$. If $\xi\in (\max(F_q\cap (\om\cdot\gamma)), \om\cdot\gamma)$, then $n_r \in S^\xi_s \setminus S^\eta_s$. If $\xi\leq \max(F_q\cap (\om\cdot\gamma))$, then $S^\xi_s \nsubseteq S^\eta_s$
	since $S^\xi_{r} \nsubseteq S^{b^{-1}(\eta)}_{r}$. Hence $s\in \mathbb{P}_\beta$. It is easy to check that $s<r$ and $s<q$.

\end{example}
\medskip

This example refutes the natural conjecture that each special tower is in fact Hausdorff (since a Hausdorff tower cannot be equivalent to a Suslin tower). Moreover, it proves that the property of being special, unlike the Hausdorff property, is not
invariant under the equivalence of towers (cf.\ Proposition \ref{Hausdorff-equivalent}).

In the following section we provide another example of a tower of this kind: a tower which is neither Hausdorff nor equivalent to a Suslin tower.

Notice that most of the examples presented in this section exist in models obtained by adding $\om_1$ Cohen reals. It seems that the structure of towers is particularly rich in such models. We will show that adding $\om_1$ Cohen reals produces various interesting gaps.

\section{Structure of gaps after adding $\om_1$ Cohen reals} \label{gaps-towers}

One of the most natural questions related to destructibility of gaps is asking whether the class of special $(\om_1,\om_1)$-gaps coincides
with the class of Hausdorff gaps. It was posed in \cite{scheepers} as Problem 2. Since we isolated 
another property lying in between of the above ones, we can ask more specifically: 


\begin{prob}\cite[Problem 1]{scheepers} \label{special-oriented}
Is every special gap left-oriented?
\end{prob}

\begin{prob}\label{oriented-Hausdorff}
Is every left-oriented gap equivalent to a Hausdorff gap?
\end{prob}

Hirschorn in \cite{hirschorn} answered Scheeper's problem. More precisely, he gave an example of a left-oriented gap which is not equivalent to any Hausdorff gap, so he answered in negative Problem \ref{oriented-Hausdorff}. It turns out that the answer to
Problem \ref{special-oriented} is also negative.

\begin{thm} \label{special-nonHausdorff1}
There is a special gap which is not left-oriented.
\end{thm}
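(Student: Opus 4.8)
The plan is to fix a Hausdorff gap $(L_\alpha,R_\alpha)_{\alpha<\om_1}$ in the ground model $V$ (such gaps exist in $\ZFC$), add a single Cohen real $c$, and in $V[c]$ \emph{twist} the gap along $c$. Writing $d=\om\setminus c$ and identifying $c\in 2^\om$ with a subset of $\om$, I would set
\[ L'_\alpha = (L_\alpha \cap c) \cup (R_\alpha \cap d), \qquad R'_\alpha = (R_\alpha \cap c) \cup (L_\alpha \cap d), \]
so that the gap is left untouched on the coordinates in $c$ and its two halves are interchanged on the coordinates in $d$. Then $L'_\alpha \cap R'_\alpha = \emptyset$ is immediate, and from $L_\alpha\cap R_\beta =^* \emptyset$ one computes
\[ L'_\alpha \cap R'_\beta = (L_\alpha \cap R_\beta \cap c) \cup (L_\beta \cap R_\alpha \cap d) =^* \emptyset, \]
so $(L'_\alpha,R'_\alpha)$ is a pre-gap; it is a pair of towers because $c$ and $d$ each meet every ground-model infinite set infinitely, whence $L'_\beta \setminus L'_\alpha \supseteq (L_\beta\setminus L_\alpha)\cap c$ is infinite.

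Next I would verify that $(L'_\alpha,R'_\alpha)$ is in fact a gap and is special. For the gap property I invert the twist: from a hypothetical interpolant $L'$ in $V[c]$, the set $L=(L'\cap c)\cup\big((\om\setminus L')\cap d\big)$ interpolates the original gap $(L_\alpha,R_\alpha)$, as a routine case split on $c$ and $d$ shows. But a Hausdorff gap is special, hence by Kunen's theorem it remains a gap in the ccc extension $V[c]$, so no such $L$ exists---a contradiction. Specialness of the twisted gap rests on the identity
\[ (L'_\alpha \cap R'_\beta) \cup (L'_\beta \cap R'_\alpha) = (L_\alpha \cap R_\beta) \cup (L_\beta \cap R_\alpha), \]
obtained by distributing the intersections. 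Since a Hausdorff gap is left-oriented (Fact~\ref{Hausdorff is left oriented}) and (O) trivially implies (K), the original gap has a subgap satisfying (K); by the displayed identity the corresponding subgap of $(L'_\alpha,R'_\alpha)$ satisfies (K) as well, so $(L'_\alpha,R'_\alpha)$ is special.

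The main point is to show that no uncountable subgap of $(L'_\alpha,R'_\alpha)$ satisfies (O). Given a name $\dot X$ for an uncountable subset of $\om_1$ and a condition $p$, I choose $p_\alpha \le p$ with $p_\alpha \Vdash \alpha \in \dot X$ for uncountably many $\alpha$; since Cohen forcing is countable, a single condition $p^*$ occurs for uncountably many of these $\alpha$, say with stem $D=\dom{p^*}\sub n$. Thinning the index set further so that $L_\alpha\cap n$ and $R_\alpha\cap n$ are constant, equal to $L^*$ and $R^*$, I note that $L^*\cap R^* = L_\alpha\cap R_\alpha\cap n = \emptyset$, whence both finite sets $L_\alpha\cap R_\beta$ and $L_\beta\cap R_\alpha$ are disjoint from $D$. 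For $\alpha<\beta$ in the thinned set I may therefore extend $p^*$ to a condition $q$ that puts $c$ equal to $0$ on all of $L_\alpha\cap R_\beta$ and equal to $1$ on all of $L_\beta\cap R_\alpha$; then $q\Vdash \alpha,\beta\in\dot X$ and $q \Vdash L'_\alpha\cap R'_\beta = \emptyset$. As $p$ and $\dot X$ were arbitrary, in $V[c]$ every uncountable subgap fails (O), so $(L'_\alpha,R'_\alpha)$ is not left-oriented.

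I expect the crux to be this last genericity argument, and within it the observation that after making $L_\alpha\cap n$ and $R_\alpha\cap n$ constant the two crossing sets $L_\alpha\cap R_\beta$ and $L_\beta\cap R_\alpha$ automatically avoid the decided stem $D$---a constant value would have to lie in $L_\alpha\cap R_\alpha=\emptyset$. This is exactly what frees me to reset the Cohen real on these finite sets and kill $L'_\alpha\cap R'_\beta$ without clashing with the part of $c$ already fixed by $p^*$.
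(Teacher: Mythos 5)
Your proof is correct, and it takes a genuinely different route from the paper's. The paper's argument (Example \ref{special-nonLO}, the ``inverted Spasojevi\'c gap'') is soft and modular: it starts from a model of $\mathsf{MA}(\sigma\text{-centered})$, uses Proposition \ref{cohen-inclusion-tower} to turn one Cohen real into a Suslin tower $(R_\alpha)$, Roitman's theorem to keep $\mathsf{MA}(\sigma\text{-centered})$ in the extension, and Spasojevi\'c's Theorem \ref{thm-spasojevic} to complete that tower to a gap $(L_\alpha,R_\alpha)$ satisfying (O); inverting the sides preserves specialness since (K) is symmetric, while Proposition \ref{half-is-special} rules out left-orientation of $(R_\alpha,L_\alpha)$ because its left half is Suslin. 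You instead build the witness by hand over an \emph{arbitrary} ground model with a single Cohen real: the twist of a $\ZFC$ Hausdorff gap along $c$ preserves condition (K) pairwise-exactly (your displayed identity), the gap property pulls back through the twist so Kunen's indestructibility theorem applies to the ground-model Hausdorff gap, and a direct density argument kills (O) on every uncountable subfamily. Your key observation is sound: after thinning so that $L_\alpha\cap n$ and $R_\alpha\cap n$ are constant, both crossing sets avoid the stem, and the same one-line computation (a common point would lie in $L_\alpha\cap R_\alpha=\emptyset$) also shows $L_\alpha\cap R_\beta$ and $L_\beta\cap R_\alpha$ are disjoint from \emph{each other}, which you implicitly need in order to set $c$ to $0$ on the first and to $1$ on the second; that detail is worth stating explicitly but is trivially filled. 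What your approach buys: it dispenses with $\mathsf{MA}$, Roitman and Spasojevi\'c entirely, needs only one Cohen real over any model, and makes visible exactly which pairs retain (K) while (O) is destroyed. What the paper's approach buys: it situates the example inside the structural theory (Suslin towers as halves of gaps, Propositions \ref{half-is-Hausdorff} and \ref{half-is-special}), and its companion generic construction (Theorem \ref{David0}) extracts more from one gap --- simultaneously a left-oriented non-Hausdorff gap and, after inversion, a special non-left-oriented one --- information your twist does not directly provide.
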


First, we give an example which relies only on simple facts and known results. In particular, we need the following theorem due to Roitman:

\begin{thm}[\cite{roitman}, \cite{roitman1}]\label{thm-roitman}
Adding a single Cohen real to a model satisfying $\mathsf{MA}(\sigma\text{-centered})$ preserves $\mathsf{MA}(\sigma\text{-centered})$.
\end{thm}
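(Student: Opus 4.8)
The plan is to reduce the statement to a cardinal-invariant assertion by way of Bell's theorem, which identifies $\MA(\sigma\text{-centered})$ with the equality $\pp=\con$: indeed $\MA_\kappa(\sigma\text{-centered})$ holds exactly when $\kappa<\pp$, so $\MA(\sigma\text{-centered})$ is the statement $\pp=\con$. A single Cohen real is added by the countable (hence ccc) forcing $\coh=\mathrm{Fn}(\om,2)$, which collapses no cardinals, and a routine nice-name count gives $\con^{V[c]}=\con^{V}$. Thus it suffices to show that $\coh$ forces $\pp=\con$, that is, that every family $\mathcal F=\{A_\alpha\colon\alpha<\kappa\}\sub[\om]^{\om}$ with $\kappa<\con$ and the strong finite intersection property in $V[c]$ has an infinite pseudo-intersection in $V[c]$.

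First I would pass to names. Fix names $\dot A_\alpha$ and a condition $p_0$ forcing the family $\{\dot A_\alpha\}$ to have the strong finite intersection property and, toward a contradiction, to have no pseudo-intersection; relativizing below $p_0$ (using $\coh_{\le p_0}\cong\coh$) we may assume $\mathbf 1$ forces this. The crucial point is that, since $\coh$ is countable, each name is coded in $V$: the sets $\tilde A_\alpha=\{(s,n)\in\coh\times\om\colon s\Vdash\check n\in\dot A_\alpha\}$ lie in $V$ and inside the fixed countable set $\coh\times\om$. Because $\mathbf 1$ forces each $\bigcap_{\alpha\in F}\dot A_\alpha$ to be infinite, one gets the following density fact in $V$: for every finite $F\sub\kappa$, every $t\in\coh$ and every $m<\om$ there is a pair $(s,n)\in\bigcap_{\alpha\in F}\tilde A_\alpha$ with $s\le t$ and $n>m$. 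In particular $\{\tilde A_\alpha\colon\alpha<\kappa\}$ has the strong finite intersection property in $V$, and the $\coh$-projection of each finite intersection is cofinal below every condition.

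Next I would invoke $\pp=\con$ in $V$, applied not to $\{\tilde A_\alpha\}$ directly but to a $\sigma$-centered Mathias-type poset whose conditions are pairs $(\sigma,F)$ with $\sigma\in[\coh\times\om]^{<\om}$, $F\in[\kappa]^{<\om}$ and $\sigma\sub\bigcap_{\alpha\in F}\tilde A_\alpha$. This poset is $\sigma$-centered (centered on the finite stem $\sigma$) and has size $\kappa<\con$. Meeting the $\kappa$ dense sets forcing $\alpha\in F$, together with the countably many dense sets demanding $|\sigma|\ge m$ and, for each $t\in\coh$ and $m<\om$, demanding some $(s,n)\in\sigma$ with $s\le t$ and $n>m$ (all dense by the previous paragraph), yields an infinite $\tilde B\sub\coh\times\om$ with $\tilde B\sub^*\tilde A_\alpha$ for all $\alpha$ whose projection is cofinal below every condition. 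Finally I would set $\dot B=\{n\colon\exists s\in\dot G\ (s,n)\in\tilde B\}$ and check that $\mathbf 1$ forces $\dot B$ to be an infinite pseudo-intersection of $\mathcal F$: the inclusion $\tilde B\sub^*\tilde A_\alpha$ gives $\dot B\sub^*\dot A_\alpha$, while the projection-cofinality of $\tilde B$ makes $\{s\colon s\Vdash\exists n>m\ \check n\in\dot B\}$ dense for each $m$, so $\dot B$ is infinite. This contradicts the choice of $\mathcal F$.

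The hard part is precisely this last verification, and in particular guaranteeing that $\dot B$ is \emph{infinite}: the pseudo-intersection built in $V$ carries ground-model information that the Cohen generic need not follow, and a pseudo-intersection of the $\tilde A_\alpha$ chosen without the extra requirement could be read off as a \emph{finite} set in $V[c]$. This is exactly what the density sets controlling the $\coh$-projection of $\tilde B$ repair. It is also here that the two hypotheses are essential: $\sigma$-centeredness keeps the auxiliary poset within reach of $\pp=\con$, and the countability of $\coh$ is what allows the names $\dot A_\alpha$ to be coded by the ground-model sets $\tilde A_\alpha$. Both ingredients fail for arbitrary ccc forcing, which is consistent with the fact that full $\MA$ is not preserved by adding a Cohen real.
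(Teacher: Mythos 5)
The paper does not prove this theorem at all; it is quoted from Roitman's papers, so there is no in-paper argument to compare against. Your route --- Bell's theorem identifying $\MA(\sigma\text{-centered})$ with $\pp=\con$, plus a direct verification that the countable forcing $\coh$ preserves $\pp=\con$ --- is a legitimate and well-known alternative to Roitman's original argument, which instead shows that the two-step iteration $\coh * \dot{\mathbb{Q}}$ is $\sigma$-centered whenever $\dot{\mathbb{Q}}$ is a name for a $\sigma$-centered poset (using only that $\coh$ is countable: two conditions $(s,\dot q_0)$, $(s,\dot q_1)$ with $s$ forcing $\dot q_0,\dot q_1$ into the same centered piece are compatible), and then pulls the $\kappa$ many dense sets back through the iteration. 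The iteration proof is shorter and needs no cardinal-invariant translation; your proof buys an explicit combinatorial picture, and you correctly isolate the genuine crux, namely that a pseudo-intersection of the coded sets $\tilde A_\alpha$ chosen naively could concentrate on a single Cohen condition and evaluate to a finite set, which is exactly what your projection-cofinality dense sets prevent. One standard point you should make explicit: to pass from a $\kappa$-sized family \emph{in} $V[c]$ to a ground-model sequence of names $\langle \dot A_\alpha\colon \alpha<\kappa\rangle$ you need the maximum principle (fullness) applied to a name for an enumeration; this is routine but not automatic.

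There is, however, one step that fails as literally written. You define conditions of the auxiliary poset as pairs $(\sigma,F)$ satisfying $\sigma\subseteq\bigcap_{\alpha\in F}\tilde A_\alpha$, and then claim the sets $D_\beta=\{(\sigma,F)\colon\beta\in F\}$ are dense. They are not: if $\sigma\not\subseteq\tilde A_\beta$, no extension of $(\sigma,F)$ can ever include $\beta$ in its side condition, since the stem only grows. The repair is the standard Mathias convention, which your phrase ``Mathias-type'' and your conclusion $\tilde B\subseteq^*\tilde A_\alpha$ (almost-inclusion, not inclusion) suggest you intended anyway: place no requirement on $\sigma$ in the definition of a condition, and order by $(\sigma',F')\le(\sigma,F)$ iff $\sigma\subseteq\sigma'$, $F\subseteq F'$, and $\sigma'\setminus\sigma\subseteq\bigcap_{\alpha\in F}\tilde A_\alpha$. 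With this order $D_\beta$ is trivially dense, your projection dense sets remain dense by your density fact, $\sigma$-centeredness still holds (conditions sharing a stem have their union of side conditions as a common extension, and there are only countably many stems), and directedness of the filter gives $\tilde B\setminus\sigma_\alpha\subseteq\tilde A_\alpha$ whenever $(\sigma_\alpha,F_\alpha)$ is a filter condition with $\alpha\in F_\alpha$, which is the $\subseteq^*$ you need. After this one-line fix the rest of your verification --- that $\mathbf 1$ forces $\dot B$ infinite via density of $\{s\colon s\Vdash\exists n>m\ \check n\in\dot B\}$, and $\dot B\setminus\dot A_\alpha$ contained in the finite projection of $\tilde B\setminus\tilde A_\alpha$ --- goes through correctly.
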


\begin{example}An inverted Spasojevi\'{c} gap.\label{special-nonLO}

Work in a model of $\mathsf{MA}_{\omega_1}(\sigma\text{-centered})$.
Using Proposition~\ref{cohen-inclusion-tower} and the theorem above, we can add a Cohen real and get a Suslin tower
$\left(R_\alpha\right)_{\alpha < \om_1}$ in the extension without loosing $\mathsf{MA}(\sigma\text{-centered})$. Of course, the tower cannot be maximal since $\mathfrak{t}>\om_1$. Theorem~\ref{thm-spasojevic} now gives us a special gap $\left(L_\alpha, R_\alpha\right)_{\alpha <
\om_1}$ fulfilling condition (O). Consider the gap $(R_\alpha, L_\alpha)_{\alpha <  \om_1}$.
Inverting the sides of an indestructible gap cannot make it destructible, so $\left(R_\alpha, L_\alpha\right)_{\alpha<\om_1}$ is still special.
However, it cannot be left-oriented. Indeed, in this case Proposition~\ref{half-is-special} would imply that $\left(R_\alpha\right)_{\alpha<\om_1}$ is special,  but this tower is Suslin.
\end{example}

The reader perhaps wonder if the gap $(L_\alpha, R_\alpha)_{\alpha<\om_1}$ introduced by the forcing from Theorem~\ref{thm-spasojevic} is Hausdorff.
We will show that it is not. Actually, Example~\ref{David} will show that gaps introduced by Spasojevi\'c's forcing are left-oriented, but not
Hausdorff. Thus to obtain a special non-Hausdorff gap, we do not need to invert the gap in Example~\ref{special-nonLO}. As a corollary we obtain that left-oriented gaps are not necessarily right-oriented. The following example shows that the
Hausdorff condition for gaps is not symmetric either. There is a Hausdorff gap such that the inversed gap is not Hausdorff. We start Hechler's machinery again.


\begin{example} An asymmetric Hausdorff gap. \label{uninvertible-haus}

 
We define a forcing $\mathbb{P}$ consisting of conditions of the form
\[ p=\left(F_p,n_p, \left\langle L_p^\alpha, R_p^\alpha \right\rangle_{\alpha \in F_p} \right),\]
where 
\begin{enumerate}[(1)]
	\item $F_p \in [\om_1]^{<\om}$; \label{uninvertible-haus-cond_first}
	\item $n_p < \om$;
	\item $L^\alpha_p, R^\alpha_p \sub n_p$ for each $\alpha \in F_p$;
	\item $L^\alpha_p \cap R^\alpha_p = \emptyset$ for each $\alpha \in F_p$. \label{uninvertible-haus-cond_last}
\end{enumerate}
	
	A condition $q$ is stronger than $p$ if 
	
	\begin{enumerate}[(a)]
		\item $n_p \leq n_q$ and $F_p\sub F_q$; \label{uninvertible-haus-ord_a}
		\item $L^\alpha_q \cap n_p = L^\alpha_p$, $R^\alpha_q \cap n_p = R^\alpha_p$ for $\alpha \in F_p$;
		\item for each $\alpha,\beta \in F_p,$ $\alpha < \beta $ and $i\in [n_p, n_q)$ 
	\[\text{ if } i \in L^\alpha_q \text{ then } i \in L^\beta_q \text{\quad and \quad}
	\text{ if } i \in R^\alpha_q \text{ then } i \in R^\beta_q;\] \label{uninvertible-haus-ord_c}
		\item for each $\alpha \in F_p$ and $\xi \in F_q \setminus F_p,$ $\xi < \alpha$
	there is some $i \geq  n_p$ such that $i \in L^\xi_q \cap R^\alpha_q$.
	\end{enumerate}
	It is easy to see that for each $\alpha < \om_1$ the set $\{p \colon \alpha \in F_p\}$ 
	is dense. Let $G$ be a $\mathbb{P}$-generic filter, and let $L_\alpha = \bigcup_{p \in G} L^\alpha_p$ and
	$R_\alpha = \bigcup_{p \in G} R^\alpha_p$ for $\alpha < \om_1$. Then $(L_\alpha, R_\alpha)_{\alpha<\om_1}$ is Hausdorff 
	provided $\mathbb{P}$ preserves $\om_1$.
	
\begin{claim}
	$\mathbb{P}$ is equivalent to adding $\om_1$ Cohen reals. 
\end{claim}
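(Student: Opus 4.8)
The plan is to apply Koppelberg's criterion \cite{Koppelberg} in exactly the way it was used in Proposition~\ref{equivalent-to-Cohen} and Examples~\ref{gen-haus} and~\ref{special-nonHaus}. For $\alpha<\om_1$ put $\mathbb{P}_\alpha = \{p\in\mathbb{P}\colon F_p\sub\alpha\}$ and let $\mathfrak{A}_\alpha$ be the Boolean algebra generated by $\mathbb{P}_\alpha$. Conditions (a)--(d) of the criterion are immediate: since each $F_p$ is finite it is contained in some countable ordinal, so $\mathbb{P}=\bigcup_{\alpha<\om_1}\mathbb{P}_\alpha$ and $\mathbb{P}_\gamma=\bigcup_{\alpha<\gamma}\mathbb{P}_\alpha$ at limits, while each $\mathbb{P}_\alpha$ (hence each $\mathfrak{A}_\alpha$ and each quotient $\mathfrak{A}_{\alpha+1}/\mathfrak{A}_\alpha$) is countable. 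As before, the only real work is regularity of $\mathfrak{A}_\alpha\hookrightarrow\mathfrak{A}_{\alpha+1}$, and by \cite[Proposition 7]{Balcar-Pazak} it suffices to produce, for each $\alpha<\beta\leq\om_1$, a pseudo-projection $\pi\colon\mathbb{P}_\beta\to\mathbb{P}_\alpha$. I claim the naive restriction, $\pi(q)$ given by $(F_q\cap\alpha,\, n_q,\, \langle L^\xi_q,R^\xi_q\rangle_{\xi\in F_q\cap\alpha})$, works.

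So I would fix $q\in\mathbb{P}_\beta$ and an arbitrary $r<\pi(q)$ with $r\in\mathbb{P}_\alpha$, and build a common extension $s<r,q$. Set $F_s=F_r\cup F_q$ and keep $s$ equal to $r$ on $F_r$ below $n_r$ and equal to $q$ on $F_q$ below $n_q$; note $F_q\cap\alpha\sub F_r$ and that $r,q$ already agree there below $n_q$. The points of $F_q\setminus\alpha$ are all $\geq\alpha$, hence above every point of $F_r\sub\alpha$; consequently condition (d) for $s<r$ is vacuous (its new points $F_q\setminus\alpha$ sit above all old ones), and the monotonicity clause (c) for $s<r$ binds only pairs inside $F_r$, which I can respect at fresh levels. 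The substance is condition (d) for $s<q$: for every $\xi\in F_r\setminus F_q$ (necessarily $<\alpha$) and every $\eta\in F_q\setminus\alpha$ I must supply a level $i\geq n_q$ with $i\in L^\xi_s\cap R^\eta_s$.

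The observation making this possible is that the Hausdorff witnesses already recorded inside $r$ propagate upward through the monotonicity of $q$. If $\xi$ lies below some $\xi'\in F_q\cap\alpha$, then since $r<\pi(q)$ satisfies (d) there is a level $i\in[n_q,n_r)$ with $i\in L^\xi_r\cap R^{\xi'}_r$; as $\xi'<\eta$ are both in $F_q$, clause (c) for $s<q$ forces $i\in R^\eta_s$, while the disjointness $L^{\xi'}_r\cap R^{\xi'}_r=\emptyset$ gives $i\notin L^{\xi'}_s$, so no membership is imposed on $L^\eta_s$, and this $i$ witnesses $(\xi,\eta)$. If instead $\xi$ lies above every point of $F_q\cap\alpha$, then $\xi$ and $\eta$ are unrelated by any monotonicity clause, so I dedicate a fresh level $i\geq n_r$, putting $i$ into $L^\zeta_s$ for all $\zeta\in F_r$ with $\zeta\geq\xi$ and into $R^\theta_s$ for all $\theta\in F_q\setminus\alpha$ with $\theta\geq\eta$; since the coordinates receiving $i$ on the $L$-side are low and those on the $R$-side are high, and no low–high pair is jointly constrained, disjointness and both monotonicity clauses survive. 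The main obstacle is exactly the bookkeeping in the remaining freedom: across $[n_q,n_r)$ I must extend each high coordinate $\eta$ monotonically, forcing $L^\eta_s\supseteq\bigcup\{L^{\xi'}_r\colon \xi'\in F_q\cap\alpha,\ \xi'<\eta\}$ and likewise for $R^\eta_s$, and I must check these unions stay disjoint — which they do, precisely because $r<\pi(q)$ makes $r$ monotone on $F_q\cap\alpha$ there. As the paper already notes for Example~\ref{gen-haus}, this verification is laborious rather than difficult, and once such $s$ is produced the pseudo-projection property, hence regularity and the equivalence with $\mathbb{C}_{\om_1}$, follow.
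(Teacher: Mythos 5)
Your proposal is correct and takes essentially the same route as the paper: the paper also uses the Koppelberg criterion with the filtration $\mathbb{P}_\alpha = \{p \colon F_p \sub \alpha\}$ and defines the pseudo-projection as the same naive restriction $p(q) = \left(F_q \cap \gamma, n_q, \langle L^\xi_q, R^\xi_q\rangle_{\xi \in F_q\cap\gamma}\right)$, merely asserting the pseudo-projection property without proof. Your verification of that property --- propagating the witnesses of clause (d) recorded in $r$ up to the high coordinates via monotonicity (c), using the disjointness of the monotone unions on $[n_q,n_r)$, and adding fresh levels above $n_r$ for the pairs not covered --- correctly fills in the details the paper leaves out.
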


As in Proposition \ref{equivalent-to-Cohen}, the $\mathbb{P}_\beta$ consists of conditions 
$q = \left(F_q,n_q, \left\langle L_q^\alpha, R_q^\alpha \right\rangle_{\alpha \in F_q} \right)$ with $F_q \sub \beta$. 
The pseudo-projection $p \colon \mathbb{P}_\beta \to \mathbb{P}_\gamma$ is defined by
\[p(q) = \left(F_q \cap \gamma,n_q, \left\langle L_q^\alpha, R_q^\alpha \right\rangle_{\alpha \in F_q \cap \gamma} \right).\]

\qed

\begin{claim}
$(R_\alpha)_{\alpha < \om_1}$ is a Suslin tower.
\end{claim}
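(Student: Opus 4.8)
The plan is to prove the Claim by the same density scheme used for the Suslin towers in Examples~\ref{hechler-tower} and~\ref{special-nonHaus}, namely by showing that no uncountable set of indices can give a subtower of $(R_\alpha)_{\alpha<\om_1}$ satisfying condition~(K). Fix a name $\dot X$ for an uncountable subset of $\om_1$ and a condition $p$. As usual I would first pass to the uncountable set
\[ X = \left\{\alpha<\om_1 \colon \exists\, p_\alpha \leq p,\ \alpha\in F_{p_\alpha},\ p_\alpha \Vdash \alpha\in \dot X\right\}, \]
and it then suffices to find $\zeta<\eta$ in $X$ together with a common extension $q\leq p_\zeta,p_\eta$ forcing $\zeta,\eta\in\dot X$ and $R_\zeta\sub R_\eta$. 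This is enough: for any $r\leq q$ clause~(c) applied to the pair $\zeta<\eta$ of $F_q$ guarantees that any integer entering $R_\zeta$ above $n_q$ also enters $R_\eta$, so it is enough to arrange $R^\zeta_q\sub R^\eta_q$ below $n_q$; producing such a pair below every condition shows that no uncountable set of indices is a $\sub$-antichain, hence $(R_\alpha)$ has no cofinal (K)-subtower and is Suslin.

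The key preparatory step is to thin $\{p_\alpha\colon\alpha\in X\}$ to a sufficiently rigid $\Delta$-system. I would arrange a common value $n=n_{p_\alpha}$, a common size $k=|F_{p_\alpha}|$, a fixed position of the distinguished index $\alpha$ inside $F_{p_\alpha}$, and the coordinates at each fixed position constant in $\alpha$; in particular the distinguished coordinates agree, so $R^\zeta_{p_\zeta}=R^\eta_{p_\eta}$. The crucial extra demand is that the \emph{moving parts} $F_{p_\alpha}\setminus\Delta$ be \emph{separated}: the fixed core $\Delta$ lies entirely below every moving part, and $\max(F_{p_\alpha}\setminus\Delta)<\min(F_{p_\beta}\setminus\Delta)$ whenever $\alpha<\beta$ (the distinguished index then necessarily lies in the moving part). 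Both are routine: only finitely many moving parts can meet the finite set below $\max\Delta$, and a recursion of length $\om_1$ selects an uncountable subfamily whose moving parts are strictly increasing.

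With this separation in hand the construction of $q$ becomes remarkably simple. Pick any $\zeta<\eta$ in the thinned family, set $F_q=F_{p_\zeta}\cup F_{p_\eta}$ and $n_q=n+1$, and use the single new integer $n$: put $n\in L^\xi_q$ for every $\xi\in F_{p_\zeta}\setminus\Delta$, put $n\in R^\gamma_q$ for every $\gamma\in F_{p_\eta}\setminus\Delta$, and add $n$ to no other coordinate. I would then check that $q$ is a condition and $q\leq p_\zeta,p_\eta$. The point is that separation makes clause~(d) for $q\leq p_\zeta$ vacuous, since all of $F_q\setminus F_{p_\zeta}=F_{p_\eta}\setminus\Delta$ lies above $F_{p_\zeta}$; while clause~(d) for $q\leq p_\eta$ is met by $n$ itself, as $n\in L^\xi_q\cap R^\gamma_q$ for every relevant pair $\xi\in F_{p_\zeta}\setminus\Delta$, $\gamma\in F_{p_\eta}\setminus\Delta$. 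Because the $L$-mass was added only inside $p_\zeta$ and the $R$-mass only inside $p_\eta$, the monotonicity clause~(c) never forces $n$ into both an $L$- and an $R$-coordinate of a common level, so disjointness $L^\chi_q\cap R^\chi_q=\emptyset$ survives; this is exactly where the asymmetry of~(d)---witnesses are demanded only with the $L$-index below the $R$-index---is used. Finally $R^\zeta_q=R^\zeta_{p_\zeta}=R^\eta_{p_\eta}\sub R^\eta_q$, so $q\Vdash R_\zeta\sub R_\eta$, as required.

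The main obstacle is thus not the combinatorics of merging but getting the $\Delta$-system into separated form and then verifying disjointness of the merged condition; once the moving parts are separated, the earlier danger---a witness forcing a single integer simultaneously into $R_\zeta$ and, through~(c), into some $L$-coordinate at level $\eta$---simply cannot occur, and a one-point extension does everything at once. This is why the heavy block-by-block bookkeeping of Example~\ref{special-nonHaus} can here be avoided.
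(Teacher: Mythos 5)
Your proof is correct and follows essentially the same route as the paper's: the same separated $\Delta$-system refinement, the same one-point amalgamation $n_q=n+1$ putting $n$ into the $L$-coordinates of the lower condition's moving part and the $R$-coordinates of the upper one's, with the core untouched, and the same appeal to clause~(c) above $n_q$. You are in fact slightly more explicit than the paper on two points it leaves implicit in this example --- that the distinguished coordinates must be made constant so that $R^\zeta_{p_\zeta}=R^\eta_{p_\eta}$, and why clause~(d) is vacuous for $q\leq p_\zeta$ and witnessed by $n$ for $q\leq p_\eta$.
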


Consider a name $\dot{X}$ for an uncountable subset of $\om_1$ and a condition $p\in \mathbb{P}$. There is an uncountable set 
\[X = \left\{\alpha < \om_1 \colon \exists p_\alpha < p,\ \alpha \in F_{p_\alpha},\ p_\alpha \Vdash \alpha \in \dot{X}\right\}.\]
	We will proceed in the same way as in the examples from the previous section.
	Use the $\Delta$-lemma to find an uncountable set $I$ such that $\{F_{p_\alpha} \colon \alpha \in I\}$ 
	forms a $\Delta$-system with core $\Delta,$ $\max \Delta < \min F_{p_\alpha} \setminus \Delta$ for
	$\alpha \in I$,	and $n_{p_\alpha} = n_\Delta$ is constant for $\alpha \in I$. We may assume that $\max F_{p_\alpha} < \min F_{p_\beta} \setminus \Delta$ for $\alpha<\beta<\om_1$. 
	We can further refine $I$ to an uncountable $I'$ so that all $R^\alpha_{p_\alpha}$, 
	$L^\xi_{p_\alpha}$ and $R^\xi_{p_\alpha}$ are constant for all $\xi \in \Delta$, $\alpha \in I'$.
	Pick any $\alpha < \beta \in I' \setminus \Delta$, and define a condition $q$ by $F_q = F_{p_\alpha} \cup F_{p_\beta},$
	$n_q = n_{\Delta}+1,$ 
	\begin{enumerate}[(i)]
		\item $L^\xi_q = L^\xi_{p_\alpha}$ and $R^\xi_q = R^\xi_{p_\alpha}$ for $\xi \in \Delta$, \label{uninvertible-haus-ext_i}
		\item $L^\xi_q = L^\xi_{p_\alpha} \cup \{n_\Delta\}$ and $R^\xi_q = R^\xi_{p_\alpha}$ for $\xi \in F_{p_\alpha} \setminus \Delta$,
		\item $L^\xi_q = L^\xi_{p_\beta}$ and $R^\xi_q = R^\xi_{p_\beta} \cup \{n_\Delta\}$ for $\xi \in F_{p_\beta} \setminus \Delta$. \label{uninvertible-haus-ext_iii}
	\end{enumerate}
	
	Condition $q$ is a common extension of both $p_\alpha$ and $p_\beta$, $q \Vdash \alpha, \beta \in \dot{X}$ and 
	$q \Vdash \dot{R_\alpha} \subseteq \dot{R_\beta}.$

	To show that $\mathbb{P}$ is ccc, we do the same reductions for an arbitrary uncountable set of conditions. 
	\qed
\end{example}

We present now another example witnessing the negative answer for Problem~\ref{special-oriented}.

\begin{example}A special gap which is neither left- nor right-oriented.\label{ex-non-oriented}
 	
We define a forcing $\mathbb{P}$ similar to the poset from the previous example (and also equivalent to $\mathbb{C}_{\om_1}$). 
	A condition $p\in \mathbb{P}$ is of the form $p=\left(F_p,n_p, \left(L_p^\alpha, R_p^\alpha\right)_{\alpha \in F_p} \right)$ and
	it satisfy the properties (\ref{uninvertible-haus-cond_first}--\ref{uninvertible-haus-cond_last}) 
	from Example~\ref{uninvertible-haus}. We impose the following additional restriction:
	\begin{itemize}
		\item  $\left(L^\alpha_p \cap R^\beta_p\right) \cup \left(L^\beta_p \cap R^\alpha_p\right) \neq \emptyset$ for each $\alpha < \beta \in F_p.$
	\end{itemize}
	The ordering on $\mathbb{P}$ is defined by conditions (\ref{uninvertible-haus-ord_a}--\ref{uninvertible-haus-ord_c}) 
	from the previous example.

	Let $G$ be a $\mathbb{P}$-generic filter. Put $L_\alpha = \bigcup_{p \in G} L^\alpha_p$ and $R_\alpha = \bigcup_{p \in G} R^\alpha_p$ for $\alpha < \om_1.$
    It is clear that $\left(L_\alpha,R_\alpha\right)_{\alpha<\om_1}$ is a special gap. 


\begin{claim}
 Both $\left(L_\alpha\right)_{\alpha< \om_1}$ and $\left(R_\alpha\right)_{\alpha< \om_1}$ are Suslin towers.
\end{claim}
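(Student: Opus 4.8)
The plan is to show, for each of the two towers separately, that it is forced to contain no uncountable $\sub$-antichain; by the definition of Suslin tower this suffices. The argument for $(R_\alpha)_{\alpha<\om_1}$ closely follows the corresponding Claim in Example~\ref{uninvertible-haus}, the only genuinely new point being that the common extension we build must also respect the extra gap restriction $(L^\alpha\cap R^\beta)\cup(L^\beta\cap R^\alpha)\ne\emptyset$.

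First I would fix a name $\dot X$ for an uncountable subset of $\om_1$ and a condition $p$, and as usual set
\[ X = \left\{\alpha<\om_1\colon \exists p_\alpha<p,\ \alpha\in F_{p_\alpha},\ p_\alpha\Vdash \alpha\in\dot X\right\}, \]
which is uncountable. Using the $\Delta$-lemma together with the pigeonhole thinning familiar from the proof of Theorem~\ref{MA-similar}, I pass to an uncountable $I\sub X$ so that $\{F_{p_\alpha}\colon\alpha\in I\}$ is a $\Delta$-system with core $\Delta$, $n_{p_\alpha}=n_p$ is constant, $\max\Delta<\min(F_{p_\alpha}\sm\Delta)$ and $\max F_{p_\alpha}<\min F_{p_\beta}$ for $\alpha<\beta$ in $I$, the data $L^\xi_{p_\alpha},R^\xi_{p_\alpha}$ are constant for $\xi\in\Delta$, and, writing $F_{p_\alpha}$ in increasing order, $L^\xi_{p_\alpha},R^\xi_{p_\alpha}$ depend only on the position of $\xi$ in $F_{p_\alpha}$. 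Discarding at most $|\Delta|$ many indices, I may assume $\alpha\in F_{p_\alpha}\sm\Delta$ and that $\alpha$ occupies a fixed position in $F_{p_\alpha}$, so that $R^\alpha_{p_\alpha}=R^\beta_{p_\beta}$ and $L^\alpha_{p_\alpha}=L^\beta_{p_\beta}$ for all $\alpha,\beta\in I$.

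For the tower $(R_\alpha)$, pick $\alpha<\beta$ in $I$ and define $q$ with $F_q=F_{p_\alpha}\cup F_{p_\beta}$ and $n_q=n_p+1$ by copying $p_\alpha$ on $\Delta$, adding $n_p$ to $L^\xi$ for $\xi\in F_{p_\alpha}\sm\Delta$, and adding $n_p$ to $R^\xi$ for $\xi\in F_{p_\beta}\sm\Delta$, leaving the other coordinate unchanged in each case. The only new ordered pairs in $F_q$ are $(\xi,\eta)$ with $\xi\in F_{p_\alpha}\sm\Delta$ and $\eta\in F_{p_\beta}\sm\Delta$; for these $n_p\in L^\xi_q\cap R^\eta_q$, so the gap restriction holds, while the per-level disjointness $L^\xi_q\cap R^\xi_q=\emptyset$ is clear since $n_p$ is added to at most one of the two coordinates at each level. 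Ordering condition~(\ref{uninvertible-haus-ord_c}) holds because $n_p$ was placed into $L$ (resp. $R$) precisely on the off-core points of $F_{p_\alpha}$ (resp. $F_{p_\beta}$), which are upward closed in each $F_{p_\gamma}$; hence $q\le p_\alpha$ and $q\le p_\beta$. Finally $R^\alpha_q=R^\alpha_{p_\alpha}=R^\beta_{p_\beta}\sub R^\beta_{p_\beta}\cup\{n_p\}=R^\beta_q$, and combined with~(\ref{uninvertible-haus-ord_c}) this forces $R_\alpha\sub R_\beta$. Thus $q\Vdash\alpha,\beta\in\dot X$ and $R_\alpha\sub R_\beta$, so no uncountable $\sub$-antichain is forced and $(R_\alpha)$ is Suslin.

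The argument for $(L_\alpha)$ is completely symmetric: I build $q$ by adding $n_p$ to $R^\xi$ for $\xi\in F_{p_\alpha}\sm\Delta$ and to $L^\xi$ for $\xi\in F_{p_\beta}\sm\Delta$; then the new cross pairs are witnessed by $n_p\in L^\eta_q\cap R^\xi_q$, and $L^\alpha_q=L^\alpha_{p_\alpha}=L^\beta_{p_\beta}\sub L^\beta_q$ forces $L_\alpha\sub L_\beta$. The main, and essentially the only, obstacle is guaranteeing that the merged condition $q$ still satisfies the gap restriction on the freshly created cross pairs; this is exactly what forces us to spend the single new integer $n_p$ by putting one side into $L$ and the other into $R$, and it is the point at which the present forcing departs from the purely Hausdorff variant of Example~\ref{uninvertible-haus}.
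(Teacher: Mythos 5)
Your proposal is correct and takes essentially the same approach as the paper: the same $\Delta$-system thinning followed by the same amalgamation $q$, spending the single new integer $n_p$ on the $L$-coordinates of one condition's off-core part and the $R$-coordinates of the other's, which simultaneously witnesses the gap restriction on the new cross pairs and gives $q \Vdash \dot{R}_\alpha \sub \dot{R}_\beta$ (this is exactly (i)--(iii) of Example \ref{uninvertible-haus}, which the paper reuses). The paper proves only the right side and appeals to symmetry; your mirrored construction for $\left(L_\alpha\right)_{\alpha<\om_1}$ is precisely that symmetric argument, spelled out.
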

	We prove it for the right side, the proof for the left side is exactly the same.
	Consider a name $\dot{X}$ for an uncountable subset of $\om_1$ and a condition $p$. There is an uncountable set 
	\[X = \left\{\alpha < \om_1 \colon \exists p_\alpha < p,\ \alpha \in F_{p_\alpha},\ p_\alpha \Vdash \alpha \in \dot{X}\right\}.\]
	Now proceed in the same way as in Example \ref{uninvertible-haus} to get an uncountable set $I \subseteq X$.
	Pick $\alpha < \beta \in I \setminus \Delta$ and define a condition $q$ by 
	$F_q = F_{p_\alpha} \cup F_{p_\beta}$,
	$n_q = n_{p_\alpha}+1$, and by (\ref{uninvertible-haus-ext_i}--\ref{uninvertible-haus-ext_iii}) 
	from the previous example. Condition $q$ is a common extension of both $p_\alpha$ and $p_\beta$,
	$q \Vdash \alpha, \beta \in \dot{X}$ and $q \Vdash \dot{R_\alpha} \subseteq \dot{R_\beta}.$
	\qed
    \medskip

The proof that this forcing is equivalent to $\mathbb{C}_{\om_1}$ works in a similar way as in Example~\ref{special-nonHaus}.
Let $\mathbb{P}_\beta$ be generated by conditions $q$ such that $F_q \sub \om\cdot \beta$, and define the pseudo-projection in the same way as in Example~\ref{special-nonHaus}.

	\end{example}

We prove now that consistently there is a gap providing answers to both questions from the beginning of this section. 


\begin{thm} \label{David0}
In a model obtained by adding $\om_1$ Cohen reals there is a gap $\left(L_\alpha, R_\alpha\right)_{\alpha<\om_1}$ such that
\begin{itemize}
	\item $\left(L_\alpha, R_\alpha\right)_{\alpha<\om_1}$ is left-oriented but not Hausdorff; 
	\item $\left(R_\alpha, L_\alpha\right)_{\alpha<\om_1}$ is special but not left-oriented. 
\end{itemize}
\end{thm}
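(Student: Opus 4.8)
The plan is to produce the gap by a single Hechler-type forcing $\mathbb{P}$ equivalent to $\mathbb{C}_{\om_1}$, so that the gap lives in the model obtained by adding $\om_1$ Cohen reals, and to reduce the two bullet points to three forced features using only the structural facts already proved. Concretely, I want $\mathbb{P}$ to add not two but three towers $(L_\alpha)_{\alpha<\om_1}$, $(R_\alpha)_{\alpha<\om_1}$, $(T_\alpha)_{\alpha<\om_1}$ with: (i) $L^\alpha\cap R^\alpha=\emptyset$ and condition (O), i.e.\ $L_\xi\cap R_\alpha\neq\emptyset$ for $\xi<\alpha$; (ii) $(R_\alpha)$ a Suslin tower; and (iii) $(T_\alpha)$ a Suslin tower generating the same ideal as $(L_\alpha)$. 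Feature (i) makes $(L_\alpha,R_\alpha)$ left-oriented, hence special, and since specialness is preserved by interchanging the two halves, $(R_\alpha,L_\alpha)$ is special as well. Feature (ii) together with Proposition~\ref{half-is-special} applied to $(R_\alpha,L_\alpha)$ shows that the inverted gap is not left-oriented. Feature (iii) makes $(L_\alpha)$ non-Hausdorff: a Suslin tower is never Hausdorff (Hausdorff towers are special), so by Proposition~\ref{Hausdorff-equivalent} its equivalent $(L_\alpha)$ cannot be Hausdorff either, and then Proposition~\ref{half-is-Hausdorff} gives that the gap $(L_\alpha,R_\alpha)$ is not Hausdorff. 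Thus the theorem reduces to building $\mathbb{P}$ with (i)--(iii).

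The forcing would fuse the poset of Example~\ref{special-nonHaus} (which already adds equivalent towers, one special and one Suslin) with the gap posets of Example~\ref{uninvertible-haus} and Example~\ref{ex-non-oriented}. A condition is $p=(F_p,n_p,(L^\alpha_p,R^\alpha_p,T^\alpha_p)_{\alpha\in F_p})$ with $F_p\in[\om_1]^{<\om}$ and all three finite traces contained in $n_p$, subject to $L^\alpha_p\cap R^\alpha_p=\emptyset$, to the Example~\ref{special-nonHaus} coupling between $L$ and $T$ (so that in the extension $T_\alpha\sub^* L_\alpha$ and the two generate the same ideal), and to the (O)-requirement $L^\xi_p\cap R^\alpha_p\neq\emptyset$ for $\xi<\alpha$ in $F_p$. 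The ordering is the usual one, propagating new points upward in the index for each of the three towers. Equivalence to $\mathbb{C}_{\om_1}$, and hence ccc and preservation of $\om_1$, would be checked by the pseudo-projection method of Proposition~\ref{equivalent-to-Cohen}, combining the pseudo-projections already written for the constituent posets.

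The two Suslin-ness clauses would be verified exactly as before: given a name $\dot X$ for an uncountable set, form $X=\{\alpha:\exists p_\alpha<p,\ \alpha\in F_{p_\alpha},\ p_\alpha\Vdash\alpha\in\dot X\}$, thin out by the $\Delta$-lemma and to constant traces, and build a common extension of two conditions forcing $R_\alpha\sub R_\beta$ (resp.\ $T_\alpha\sub T_\beta$); for $R$ this is the one-point extension of Example~\ref{uninvertible-haus}, for $T$ it is the block-by-block assignment of Example~\ref{special-nonHaus}. Left-orientation is immediate from the (O)-requirement, and the equivalence $\langle L_\alpha\rangle=\langle T_\alpha\rangle$ is built into the coupling. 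So the genuinely new work is only to see that all three requirements can be imposed by \emph{one} ccc forcing at once.

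That simultaneous imposition is where I expect the main difficulty, and it is a real one: left-orientation and non-Hausdorffness pull against each other. If the (O)-witness for a pair $\xi<\alpha$ were always placed at a fresh high level, as clause~(d) of Example~\ref{uninvertible-haus} does, the gap would become Hausdorff and $(L_\alpha)$ could not be equivalent to a Suslin tower. On the other hand one cannot place a common low witness in $L_\xi\cap R_\alpha$ uniformly, since a single value $c$ recurring for two indices $\beta_1<\beta_2$ would force $c\in R_{\beta_1}\cap L_{\beta_1}=\emptyset$. The witnesses guaranteeing (O) must therefore be distributed in an intermediate, oscillation-controlled fashion (as in the Spasojevi\'c-type analysis behind Example~\ref{David}), so that every pair is witnessed yet the witnesses do not thicken into the Hausdorff condition; reconciling this with compatibility of conditions (the ccc/$\Delta$-system step, where crossing a new pair seems to demand a fresh witness) and with the coupling to the Suslin tower $(T_\alpha)$ is the crux of the construction.
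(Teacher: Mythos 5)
Your reduction of the two bullet points to Propositions~\ref{half-is-Hausdorff} and \ref{half-is-special}, your treatment of the Suslin-ness of $(R_\alpha)_{\alpha<\om_1}$, and the Cohen-equivalence via pseudo-projections all match the paper. But the mechanism you chose for non-Hausdorffness --- feature (iii), a Suslin tower $(T_\alpha)_{\alpha<\om_1}$ generating the same ideal as $(L_\alpha)_{\alpha<\om_1}$ --- is not merely hard to reconcile with feature (i); it is provably impossible, by the paper's own soft results. If $T_\alpha \subseteq^* L_\alpha$ and $\langle T_\alpha\rangle = \langle L_\alpha\rangle$, then (after removing the finite sets $T_\alpha \cap R_\alpha$, which is harmless by Proposition~\ref{special-similar}) the pair $(T_\alpha, R_\alpha)_{\alpha<\om_1}$ is a gap \emph{equivalent} to $(L_\alpha, R_\alpha)_{\alpha<\om_1}$ in the sense of Section~\ref{gaps}. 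By the Lemma stating that the properties of Definition~\ref{gap-types} respect equivalence of gaps, $(T_\alpha, R_\alpha)_{\alpha<\om_1}$ is left-oriented; by Proposition~\ref{half-is-special} its left half is special, and hence so is $(T_\alpha)_{\alpha<\om_1}$ --- contradicting its Suslin-ness. This is exactly the content of the Corollary the paper derives right after Theorem~\ref{David0}: the left half of this gap is not equivalent to \emph{any} Suslin tower. So no ccc forcing (indeed, no construction at all) can impose (i) and (iii) simultaneously; the ``crux'' you identified in your last paragraph is an outright obstruction, not a balancing problem solvable by oscillation-controlled placement of witnesses.

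What the paper does instead is keep only the two towers, with the (O)-clause $L^\alpha_p \cap R^\beta_p \neq \emptyset$ ($\alpha<\beta$ in $F_p$) built into the conditions and the ordering taken to be clauses (\ref{uninvertible-haus-ord_a}--\ref{uninvertible-haus-ord_c}) of Example~\ref{uninvertible-haus} --- crucially \emph{omitting} the Hausdorff-forcing clause (d), which is the point your analysis of clause (d) correctly flagged --- and then prove non-Hausdorffness of $(L_\alpha)_{\alpha<\om_1}$ \emph{directly} by a density argument against names. Given $p$ forcing that $(L_\alpha)_{\alpha\in\dot X}$ satisfies (H), one takes a $\Delta$-system of conditions $p_\alpha$, $\alpha \in I$, amalgamates $p_{\alpha_0}$ (the least element of $I$) with $p_\beta$ ($\beta$ having infinitely many predecessors in $I$) into $q$ forcing $\dot R_{\alpha_0} \subseteq \dot R_\beta$ --- which already yields Suslin-ness of $(R_\alpha)_{\alpha<\om_1}$ --- while adding one point $n$ to the $L$-sets indexed by $F'_{\alpha_0}$ only. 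Any $r<q$ deciding a bound $k$ on $\left|\left\{\alpha \in \dot X \cap \beta\colon L_\alpha \setminus L_\beta \subseteq n+1\right\}\right|$ is then amalgamated with $k$ further conditions $p_{\alpha_1},\dots,p_{\alpha_k}$ whose roots avoid $F_r$, staggering the fresh points so that $s \Vdash L_{\alpha_j} \setminus L_\beta = \{n\}$ for all $k+1$ many $j$, contradicting $r$. Your proposal would need its third tower and the coupling deleted and this direct amalgamation argument put in their place.
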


\begin{proof}
Define a forcing notion equivalent to adding $\om_1$ Cohen reals which forces the existence of the desired gap.
A condition in $\mathbb{P}$ is a sequence \[p=\left(F_p, n_p, \langle L_p^\alpha, R_p^\alpha \rangle_{\alpha \in F_p} \right)\]
satisfying properties (\ref{uninvertible-haus-cond_first}--\ref{uninvertible-haus-cond_last}) of Example \ref{uninvertible-haus} and such that additionally
\begin{itemize}
	\item $L^\alpha_p \cap R^\beta_p \neq \emptyset$ for each $\alpha < \beta \in F_p$.
\end{itemize}
The ordering of $\mathbb{P}$ is defined by (\ref{uninvertible-haus-ord_a}--\ref{uninvertible-haus-ord_c}) of Example \ref{uninvertible-haus}.
	
As in the previous examples, it is easy to see that $\mathbb{P}$ adds a generic gap which is left-oriented (provided $\om_1$ is preserved).

\begin{claim} $\mathbb{P}$ is equivalent to $\mathbb{C}_{\om_1}$ (and so it is ccc).
\end{claim}

This is exactly the same proof as in Example~\ref{ex-non-oriented} (which is in turn similar to the proof from Example~\ref{special-nonHaus}).
\qed

	
\begin{claim}
	The tower $(L_\alpha)_{\alpha < \om_1}$ is not Hausdorff and the tower $(R_\alpha)_{\alpha<\om_1}$ is Suslin. 
\end{claim}

We prove both statements simultaneously. 
We need to show that there is no cofinal subtower $(L_\alpha)_{\alpha \in \dot{X}}$ satisfying the condition (H).
Consider a name $\dot{X}$ for an uncountable subset of $\om_1$, and suppose that
some condition $p$ forces that $(L_\alpha)_{\alpha \in \dot{X}}$ satisfies (H). We show that this leads to~a~contradiction, and $(L_\alpha)_{\alpha<\om_1}$ is not Hausdorff. Moreover, we will prove that there is $q<p$ and $\alpha<\beta\in \dot{X}$ 
such that $q \Vdash \dot{R}_\alpha \sub \dot{R}_\beta$, showing that $(R_\alpha)_{\alpha<\om_1}$ is Suslin.

There is an uncountable set 
\[I = \left\{\alpha < \om_1 \colon \exists p_\alpha < p, \alpha \in F_{p_\alpha}, p_\alpha \Vdash \alpha \in \dot{X}\right\}.\]
	
Using the $\Delta$-lemma we may assume that $ \Delta < F'_\alpha = F_{p_\alpha} \setminus \Delta$, 
and $n_{p_\alpha} = n$ is constant for $\alpha \in I$. Moreover, $F'_\alpha < F'_\beta$ for $\alpha < \beta$, and
$L^\xi_{p_\alpha}$ and $R^\xi_{p_\alpha}$ are constant for all $\xi \in \Delta$.
For some $\ell< \om$ we have $F'_\alpha = \{\xi^0_\alpha < \xi^1_\alpha < \ldots < \xi^{\ell-1}_\alpha\}$
for each $\alpha$, and $L^{\xi^i_\alpha}_{p_\alpha} = L^{\xi^i_\beta}_{p_\beta}$, 
$R^{\xi^i_\alpha}_{p_\alpha} = R^{\xi^i_\beta}_{p_\beta}$ for $\alpha, \beta \in I$, $i \in \ell$.
Finally, there is $i' < \ell$ for which $\alpha = \xi^{i'}_{p_\alpha}$ for all $\alpha \in I$.

Let $\alpha_0$ be the first element of $I$ and let $\beta \in I$ be some ordinal with infinitely many 
predecessors in $I$. Define condition $q$ by $F_q = F_{\alpha_0} \cup F_{\beta}$, $n_q = n+1$ and
\begin{itemize}
	\item $L_q^\xi = L_{p_\beta}^\xi$ for $\xi \in F_{p_{\beta}}$,
 \item $L_q^\xi = L_{p_{\alpha_0}}^\xi \cup \{n\}$ for $\xi \in F'_{\alpha_0}$,
 \item $R_q^\xi = R_{p_{\alpha_0}}^\xi$ for $\xi \in F_{p_{\alpha_0}}$,
 \item $R_q^\xi = R_{p_\beta}^\xi \cup \{n\}$ for $\xi \in F'_\beta$.
\end{itemize}

It is straightforward to check that $q\in \mathbb{P}$ and $q< p_{\alpha_0}, p_\beta$. Notice also that $q \Vdash \dot{R}_{\alpha_0} \sub \dot{R}_{\beta}$ (at this point we already know that $(R_\alpha)_{\alpha<\om_1}$ is Suslin).
According to our assumption on $\dot{X}$, there exist some $k < \om$ and a condition $r < q$ such that
\[r \Vdash \left|\left\{\alpha \in \dot{X} \cap \beta\colon L_\alpha \setminus L_\beta \sub n+1 \right\}\right|<k.\]
Since $F_r$ is finite, we can find $\{\alpha_1 < \alpha_2 < \ldots < \alpha_k \} \sub I\cap \beta$
such that $\alpha_0 < \alpha_1$ and
\[ F_r \cap [\min F'_{\alpha_1}, \max F'_{\alpha_k}] = \emptyset. \]
Define condition $s$ by $F_s = F_r \cup \bigcup_{j \leq k} F'_{\alpha_j}$, $n_s = n_r+k$ and
\begin{itemize}
 \item $L^\xi_s = L^\xi_r$ for $\xi \in F_r$, $\xi \leq \max \Delta$,
 \item $L^\xi_s = L^\xi_r \cup \{n_r\}$ for $\xi \in F_r$, $\max \Delta < \xi < \min F'_{\alpha_1}$, 
 \item $L^\xi_s = L^\xi_r \cup \big[n_r, n_r+k\big)$ for $\xi \in F_r$, $\max F'_{\alpha_k} < \xi$,
 \item $L^{\xi^i_{\alpha_j}}_s = (L^{\xi^i_{\alpha_0}}_r \cup \{n_r+j\})\cap n_s$ for $i<\ell$, $j \leq k$,
 \item $R^\xi_s=R^\xi_r$ for $\xi \in F_r$,
 \item $R^{\xi^i_{\alpha_j}}_s = R^{\xi^i_{\alpha_0}}_r \cup \big[n_r,n_r+j\big)$ for $i<\ell$, $j \leq k$.
\end{itemize}
It is not difficult to verify that $s\in \mathbb{P}$ and $s<r$, $s<p_{\alpha_j}$ for each $j \leq k$.
Hence 
\[s \Vdash \{\alpha_0,\alpha_1,\ldots, \alpha_k, \beta\} \sub \dot{X}.\]
Moreover $L^{\alpha_j}_s \setminus L^\beta_s = \{n\}$ for each $j \leq k$, and thus
$s\Vdash L_{\alpha_j} \setminus L_\beta = \{n\}$.
But this is a contradiction with $s<r$.\qed
\medskip

Notice that Proposition \ref{half-is-Hausdorff} implies that $(L_\alpha, R_\alpha)_{\alpha<\om_1}$ is not Hausdorff (but it is left-oriented). Moreover, the gap $(R_\alpha, L_\alpha)_{\alpha<\om_1}$ is still special, but Proposition \ref{half-is-special} implies that it cannot be left-oriented. 
\end{proof}

In fact, by a slight modification of the above proof, we can show that the original ($\sigma$-centered) forcing of Spasojevi\v{c} from \cite{spasojevic1} also produces a left-oriented non-Hausdorff gap.

\begin{example} A left-oriented gap not equivalent to any Hausdorff gap. \label{David}

Let $\mathcal R = \{R_\alpha \colon \alpha < \om_1\}$ be a given tower.
Spasojevi\v{c} introduced\footnote{In fact, Spasojevi\v{c} dealt with gaps in $\left(\fc, <^*\right)$ rather than $\left([\om]^\om, \subset^*\right)$ but the construction is analogous.} 
a $\sigma$-centered forcing $\mathbb{P}$ adding a tower $(L_\alpha)_{\alpha < \om_1}$ such that
$(L_\alpha, R_\alpha)_{\alpha < \om_1}$ is an oriented gap. 
We show that the tower $(L_\alpha)_{\alpha < \om_1}$ is not Hausdorff. 

A condition in $\mathbb{P}$ is a triple \[p=\left( F_p,n_p, \left(L_p^\alpha\right)_{\alpha \in F_p} \right),\]
\begin{enumerate}[(1)]
	\item $F_p \in [\om_1]^{<\om}$;
	\item $n_p < \om$;
	\item $L^\alpha_p \sub n_p$ for each $\alpha \in F_p$;
	\item $L^\alpha_p \cap R_\alpha = \emptyset$ and $L^\alpha_p \cap R_\beta \neq \emptyset$ for each $\alpha<\beta \in F_p$;
	\item $R_\alpha \setminus R_\beta \sub n_p$ for each $\alpha<\beta\in F_p$.
\end{enumerate}
	
A condition $q$ is stronger than $p$ if 
	
\begin{enumerate}[(a)]
		\item $n_p \leq n_q$ and $F_p\sub F_q$;
		\item $L^\alpha_q \cap n_p = L^\alpha_p$ for $\alpha \in F_p$;
		\item for each $\alpha < \beta \in F_p$ we have $L^\alpha_q \cap \left[n_p,n_q\right)\sub L^\beta_q$.
\end{enumerate}

\begin{lem}
 $\mathbb{P}$ is $\sigma$-centered.
\end{lem}

This is proved in \cite{spasojevic1} in more detail for an analogous forcing. We present a sketch of the argument for the reader's convenience.

\begin{proof}
For each $\gamma \leq \om_1$ define forcing $\mathbb P_\gamma$ consisting of conditions 
\[p=\left( F_p, G_p, n_p, \left(L_p^\alpha\right)_{\alpha \in G_p} \right),\]
\begin{enumerate}[(1)]
	\item $F_p \in [\om_1]^{<\om}$, $G_p \subseteq F_p \cap \gamma$;
	\item $n_p < \om$;
	\item $L^\alpha_p \sub n_p$ for each $\alpha \in G_p$;
	\item $L^\alpha_p \cap R_\alpha = \emptyset$ and $L^\alpha_p \cap R_\beta \neq \emptyset$ for each $\alpha<\beta$, $\alpha \in G_p$, $\beta \in F_p$.
\end{enumerate}
	
A condition $q$ is stronger than $p$ if 
	
\begin{enumerate}[(a)]
		\item $n_p \leq n_q$, $F_p\sub F_q$ and $G_p \sub G_q$;
		\item $L^\alpha_q \cap n_p = L^\alpha_p$ for $\alpha \in G_p$;
		\item for each $\alpha < \beta \in G_p$ we have $L^\alpha_q \cap \left[n_p,n_q\right)\sub L^\beta_q$;
		\item $L^\alpha_q \cap \left[n_p,n_q\right) \cap R_\beta = \emptyset$ for each $\alpha \in G_p$, $\beta \in F_p$.
\end{enumerate}

It is easy to check that $\mathbb P_{\om_1}$ is a forcing equivalent with $\mathbb P$.

\begin{claim}
	$\mathbb P_\gamma \subseteq \mathbb P_\delta$ is a regular embedding for $\gamma < \delta \leq \om_1$.
\end{claim}
\begin{proof}
	The inclusion is an embedding of posets.
	To show regularity	define $\pi \colon \mathbb P_\delta \to \mathbb P_\gamma$ by 
	\[\left( F, G, n, \left(L^\alpha\right)_{\alpha \in G} \right) \mapsto 
	\left( F, G\cap \gamma, n, \left(L^\alpha\right)_{\alpha \in G\cap \gamma} \right).\]
	It is straightforward to check that $\pi$ is a pseudo-projection from $\mathbb P_\delta $ to $\mathbb P_\gamma$.
\end{proof}

\begin{claim}
	$\mathbb P_\gamma$ is $\sigma$-centered for each $\gamma < \om_1$.
\end{claim}
\begin{proof}
	The set of conditions sharing the same $G$, $n$ and $\left(L^\alpha\right)_{\alpha \in G}$ is centered.
 \end{proof}

To conclude the proof notice that $\mathbb P_{\om_1}$ is a direct limit of the sequence of $\sigma$-centered posets
 $\left\{\mathbb P_\gamma \colon \gamma < \om_1 \right\}$ and hence is $\sigma$-centered.
\end{proof}

In the generic extension define $L_\alpha = \bigcup_{p \in G} L^\alpha_p$.
Now $(L_\alpha, R_\alpha)_{\alpha < \om_1}$ is an oriented gap.

\begin{claim}
 The tower $(L_\alpha)_{\alpha < \om_1}$ is not Hausdorff. Consequently, the gap $(L_\alpha, R_\alpha)_{\alpha < \om_1}$ is not equivalent to a Hausdorff gap.
\end{claim}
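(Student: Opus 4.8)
The plan is to mirror the proof of the non-Hausdorff Claim inside Theorem~\ref{David0}, adapting it to the situation where the right tower $\mathcal R$ lives in the ground model rather than being generic. Assume toward a contradiction that some condition $p$ forces that $(L_\alpha)_{\alpha\in\dot X}$ satisfies condition~(H), where $\dot X$ is a name for an uncountable set. As usual I would pass to the uncountable set
\[ I=\left\{\alpha<\om_1\colon \exists\, p_\alpha\le p,\ \alpha\in F_{p_\alpha},\ p_\alpha\Vdash\alpha\in\dot X\right\} \]
and thin it by the $\Delta$-lemma so that the domains $F_{p_\alpha}$ form a $\Delta$-system with core $\Delta<F'_\alpha:=F_{p_\alpha}\setminus\Delta$, the bound $n_{p_\alpha}=n$ is constant, the $L$-data at corresponding positions of the blocks $F'_\alpha$ are constant, and (using clause~(5)) $n$ already bounds all the finite differences $R_\gamma\setminus R_{\gamma'}$ that occur. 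Then fix $\beta\in I$ with infinitely many $I$-predecessors and put $\alpha_0=\min I$.

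First I would amalgamate $q\le p_{\alpha_0},p_\beta$ so as to inject one low point $m$ into $L^{\alpha_0}$ but not into $L^\beta$; because $\alpha_0\notin F_{p_\beta}$ and $\beta\notin F_{p_{\alpha_0}}$, the ordering clause~(c) does not force $m$ into $L^\beta$, and since $\alpha_0<\beta$ are jointly in $F_q$ all later $L_{\alpha_0}$-additions are funnelled into $L_\beta$. Thus with $N:=n_q$ one gets $q\Vdash L_{\alpha_0}\setminus L_\beta\sub N$ and $m\in L_{\alpha_0}\setminus L_\beta$, placing $\alpha_0$ into $S_N:=\{\xi\in\dot X\cap\beta\colon L_\xi\setminus L_\beta\sub N\}$. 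As $q$ still forces~(H) and $q\Vdash\beta\in\dot X$, it forces $S_N$ finite, so some $r\le q$ and some $k<\om$ satisfy $r\Vdash|S_N|<k$. I would then pick $\alpha_1<\dots<\alpha_k$ in $I\cap\beta$ whose blocks avoid $F_r$ (exactly as in Theorem~\ref{David0}) and build a common extension $s\le r,p_{\alpha_1},\dots,p_{\alpha_k}$ forcing each $\alpha_j\in\dot X$ and $L_{\alpha_j}\setminus L_\beta\sub N$, so that $s\Vdash|S_N|\ge k$, the desired contradiction. The second sentence of the statement follows immediately: were the gap equivalent to a Hausdorff gap, its left tower would be tower-equivalent to the left half of that gap, which is Hausdorff by Proposition~\ref{half-is-Hausdorff}, whence $(L_\alpha)$ would be Hausdorff by Proposition~\ref{Hausdorff-equivalent}, a contradiction.

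The genuinely new point — and the step I expect to be the main obstacle — is the treatment of the orientation witnesses. In Theorem~\ref{David0} the right sides $R^\beta$ were generic, so the single low point $m$ could be dropped into $R^\beta$ and serve simultaneously as the orientation witness $m\in L_{\alpha_j}\cap R_\beta$ and as the sole element of $L_{\alpha_j}\setminus L_\beta$. Here $\mathcal R$ is fixed, so $m$ must be an \emph{actual} element of $R_\beta\setminus R_{\alpha_j}$ lying below the pre-fixed bound $N$. Left-orientation forces $L_\xi\cap R_\beta\ne\emptyset$ for $\xi<\beta$, and any such witness lies in $R_\beta\setminus R_\xi$, hence is at least $\min(R_\beta\setminus R_\xi)$; so the real difficulty is to secure, for one bound $N$ and all $k$ chosen predecessors at once, that a witness can be taken below $N$, i.e.\ that infinitely many $\xi\in I\cap\beta$ satisfy $R_\beta\cap N\nsubseteq R_\xi$. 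Since $\mathcal R$ is a tower each $R_\beta\setminus R_{\alpha_j}$ is infinite and $\bigcup_j R_{\alpha_j}=^*R_{\alpha_k}$, so $R_\beta\setminus\bigcup_j R_{\alpha_j}$ is infinite; granting the uniform low bound, the remaining bookkeeping in defining $q$ and $s$ — filling $[N,n_s)$ identically in $L^{\alpha_j}_s$ and $L^\beta_s$ so that no difference appears above $N$, while keeping each $L^\gamma_s$ disjoint from the fixed $R_\gamma$ and meeting the fixed $R_{\gamma'}$ for $\gamma<\gamma'$ — is handled by drawing the finitely many required values from the infinite complements of finite unions of the sets $R_\gamma$ and then verifying clauses~(4),(5) and the ordering~(c). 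This verification is laborious but routine; the conceptual weight sits entirely in keeping the forced orientation witnesses uniformly low.
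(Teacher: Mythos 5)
You have the right skeleton, which is also the paper's skeleton: amalgamate $q\le p_{\alpha_0},p_\beta$ with a single low witness, extract $r$ and $k$ with $r\Vdash|\{\alpha\in\dot X\cap\beta\colon L_\alpha\setminus L_\beta\sub N\}|<k$, then force $k+1$ many members of $\dot X$ into that set. You have also correctly located the crux: the orientation witnesses toward $\beta$ and among the $k$ new blocks must be genuine elements of the ground-model sets $R_\gamma$, while everything allowed to differ between $L_{\alpha_j}$ and $L_\beta$ must stay below the pre-fixed bound $N$. But at exactly this point you write ``granting the uniform low bound'', and that grant is the whole theorem; moreover the two facts you offer in its place do not suffice. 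First, the infinitude of $R_\beta\setminus\bigcup_j R_{\alpha_j}$ says nothing about elements below $N$: the witness $n_0\in R_\beta$ must be fixed already when $q$ is built (its membership in $R_\beta$ is what keeps it permanently out of $L_\beta$), i.e.\ before $r$, $k$ and the $\alpha_j$ exist, so one must later be able to choose the $\alpha_j$ with $n_0$ avoiding all $R_\xi$ for $\xi$ in their blocks. Second, your plan to handle the remaining witnesses ``by drawing the finitely many required values from the infinite complements of finite unions of the sets $R_\gamma$'' cannot work for the intersection requirements: a witness for $L^{\alpha_j}_s\cap R_{\alpha_{j'}}\neq\emptyset$ (for $j<j'$) must lie \emph{inside} $R_{\alpha_{j'}}$, must be $\geq n_r$ (it is a newly added point), and must avoid $R_\beta$ (otherwise it cannot be inserted into $L^\beta_s$ and would push $L_{\alpha_j}\setminus L_\beta$ above $N$). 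Since $\beta\in F_r$ and $R_{\alpha_{j'}}\sub^* R_\beta$, the candidate set $R_{\alpha_{j'}}\setminus R_r$ is \emph{finite} for any $\alpha_{j'}<\beta$ chosen in advance, and may lie entirely below $n_r$; so choosing the $\alpha_j$ first and hunting witnesses afterwards genuinely fails. This is not routine bookkeeping.

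The paper supplies the missing mechanism with a pair of countable elementary submodels $M\in N\prec H(\theta)$: it takes $\alpha_0\in I\cap N\setminus M$, core $\Delta=F_{p_{\alpha_0}}\cap M$, $\beta\in I\setminus N$, and $n_0\in\underline{R}_\beta\setminus\overline{R}_{\alpha_0}$ (where $\underline{R}_\alpha$ and $\overline{R}_\alpha$ are the intersection and union of the $R_\xi$ over the block of $\alpha$). After $r$ appears, the set $I_0$ of indices whose $R$-data agrees with that of $\alpha_0$ up to $n_r$ is definable in $M$ and contains $\alpha_0\notin M$, hence is uncountable; this is what reconciles the early choice of $n_0$ with the late choice of the $\alpha_j$. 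The witnesses among the new blocks are then produced by reflection rather than direct selection: pick a ``virtual'' ordinal $\alpha'_{j+1}\notin N$ so large that $\underline{R}_{\alpha'_{j+1}}\setminus(Z\cup R_r)$ is infinite, take $n_{j+1}$ there, note that $I_{j+1}=\{\alpha\in I_j\colon n_{j+1}\in\underline{R}_\alpha\}\in N$ is uncountable because it contains $\alpha'_{j+1}\notin N$, and only then pick $\alpha_{j+1}\in I_{j+1}\cap N$. The resulting triangular pattern ($n_j\in\underline{R}_{\alpha_i}$ iff $j\leq i$, all $n_j\notin R_r$) is what allows every witness above $n_r$ to be inserted simultaneously into $L^\beta_s$ and the other $F_r$-coordinates above $\Delta$, leaving $L^{\alpha_j}_s\setminus L^\beta_s=\{n_0\}$ with $n_0<N$. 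None of this reflection argument appears in your proposal, so the proof is incomplete precisely at its central step. Your derivation of the second sentence via Propositions~\ref{half-is-Hausdorff} and~\ref{Hausdorff-equivalent} is correct.
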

\begin{proof}
	For a contradiction, take a name $\dot{X}$ for an uncountable subset of $\om_1$ and assume that
	some condition $p$ forces that $(L_\alpha)_{\alpha \in \dot{X}}$ satisfies (H).

There is an uncountable set 
\[I = \left\{\alpha < \om_1 \colon \exists p_\alpha < p, \alpha \in F_{p_\alpha}, p_\alpha \Vdash \alpha \in \dot{X}\right\}.\]

Fix some large enough cardinal $\theta$ and countable elementary submodels $M, N \prec H(\theta)$, $I, \mathcal R \in N, M$ such that $M\in N$.
Notice that $M\sub N$ (since $M$ is countable), and $M\ne N$.
Fix some $\alpha_0 \in I \cap N \setminus M$. 

Work in $M$. By passing to a subset we can suppose that all conditions $p_\alpha$ for $\alpha \in I$ are isomorphic to $p_{\alpha_0}$ and form a
`nice' $\Delta$-system with core $\Delta = F_{\alpha_0} \cap M$.
In particular, we assume that $ \Delta < F'_\alpha = F_{p_\alpha} \setminus \Delta$ 
and $n_{p_\alpha} = n_{p_0} = n$ for~$\alpha \in I$. Moreover, $F'_\alpha < F'_\beta$ for~$\alpha < \beta$ and
$L^\xi_{p_\alpha} \cap n$, and~$R_\xi \cap n$ are constant for~all~$\xi \in \Delta$.
For some $\ell< \om$ we have $F'_\alpha = \{\xi^0_\alpha < \xi^1_\alpha < \ldots < \xi^{\ell-1}_\alpha\}$
for each $\alpha$, and $L^{\xi^i_\alpha}_{p_\alpha}\cap n = L^{\xi^i_\beta}_{p_\beta} \cap n$, 
$R_{\xi^i_\alpha}\cap n = R_{\xi^i_\beta}\cap n$ for $\alpha, \beta \in I$, $i < \ell$.
Finally, there is $i' < \ell$ for which $\alpha = \xi^{i'}_{p_\alpha}$ for all $\alpha \in I$.

For $\alpha \in I$ denote \[\overline{R}_\alpha = \bigcup \left\{R_\xi \colon \xi \in F'_\alpha \right\}
\text{\quad and \quad} \underline{R}_\alpha= \bigcap \left\{R_\xi \colon \xi \in F'_\alpha \right\}.\]

Fix $\beta \in I \setminus N$. 
There is some $n_0 > n$, $n_0 \in \underline{R}_\beta$ such that $n_0 \not\in \overline{R}_{\alpha_0}$.
Define a condition $q$ by $F_q = F_{\alpha_0} \cup F_\beta$, 
 $n_q = \max\left(n_0, \overline{R}_{\alpha_0} \setminus \underline{R}_\beta\right)+1$ and
\begin{itemize}
	\item $L_q^\xi = L_{p_\beta}^\xi$ for $\xi \in F_\beta$, 
 	\item $L_q^\xi = L_{p_{\alpha_0}}^\xi \cup \{n_0\}$ for $\xi \in F'_{\alpha_0}$.
\end{itemize}

Thus $q< p_{\alpha_0}, p_\beta$.
There exist some $k < \om$ and a condition $r < q$ such that
\[r \Vdash \left|\left\{\alpha \in \dot{X} \cap \beta\colon L_\alpha \setminus L_\beta \sub n_q \right\}\right|<k.\]

Denote $A = F_r \cap N$, $B = F_r\setminus N$,
and let $R_A = \bigcup \{R_\xi \colon \xi \in A\}$, $R_B = \bigcup \{R_\xi \colon \xi \in B\}$,
$R_r = R_A \cup R_B$.

\begin{claim}
There exist a sequence $\{\alpha_1 < \alpha_2 < \ldots < \alpha_k\} \sub I \cap N$, $\max A < \min F'_{\alpha_1}$ such that $R_{\xi^i_{\alpha_j}} \cap n_r = R_{\xi^i_{\alpha_0}} \cap n_r$ for $j \leq k$, $i < \ell$,
and a sequence $\{n_j \colon n_j > n_r, 0<j\leq k\} \sub \om \setminus R_r$ such that for
$0<j\leq k, i \leq k$ we have $n_j \in \underline{R}_{\alpha_i}$ if $j\leq i$ and 
$n_j \not\in \overline{R}_{\alpha_i}$ if $j > i$.
\end{claim}
\begin{proof}
	Let $I_0$ be such that 
	$R_{\xi^i_{\alpha}} \cap n_r = R_{\xi^i_{\alpha_0}} \cap n_r$ for each $\alpha \in I_0$ and $i < \ell$. Notice that $I_0 \in M$ since $I\in M$ and the refinement procedure is definable. Moreover, $|I_0| = \om_1$. Otherwise, $I_0 \sub M$ and, in particular, $\alpha_0 \in M$.

To choose $\alpha_1$, consider the increasing tower $\left\{\underline{R}_\alpha \setminus R_A \colon \alpha \in I_0 \right\} \in N$. This tower is not bounded by the set $R_B$, hence there exist some $\alpha'_1 \notin N$ and $n_1 > n_r$ such that $n_1 \in \underline{R}_{\alpha'_1} \setminus R_r$.
Define $I_1 = \left\{\alpha \in I_0 \colon n_1 \in \underline{R}_{\alpha}\right\} \in N$.
Since $N\prec H(\theta)$ and $\alpha'_1\notin N$, the set $I_1$ is uncountable. Pick any $\alpha_1 \in I_1 \cap N$ such that $\max A < \min F'_{\alpha_1}$.

Suppose that $\alpha_j, I_j \in N$ are defined for some $j<k$.
Put $Z = R_A \cup \bigcup \left\{\overline{R}_{\alpha_i} \colon i \leq j \right\}$.
Consider the tower
$\left\{\underline{R}_\alpha \setminus Z \colon \alpha \in I_j \right\} \in N$.
This tower is not bounded by $R_B$ hence there exist some 
$\alpha'_{j+1} \notin N$ and $n_{j+1} > n_r$ such that 
$n_{j+1} \in \underline{R}_{\alpha'_{j+1}} \setminus (Z \cup R_r)$.
Define \[I_{j+1} = \left\{\alpha \in I_j \colon n_{j+1} \in \underline{R}_{\alpha}\right\} \in N.\]
Again, since $N\prec H(\theta)$ and $\alpha'_{j+1}\notin N$, the set $I_{j+1}$ is uncountable.
Pick any $\alpha_{j+1} \in I_{j+1} \cap N$, $\alpha_{j+1} > \alpha_j$.
\end{proof}

Define the condition $s$ by $F_s = F_r \cup \bigcup_{j \leq k} F'_{\alpha_j}$, $n_s>\max\{n_i\colon i\leq k\}+n_r$,
\begin{itemize}
 \item $L^\xi_s = L^\xi_r$ for $\xi \in F_r$, $\xi \leq \max \Delta$;
 \item $L^\xi_s = L^\xi_r \cup \bigcup \left\{n_j \colon 0<j\leq k\right\}$ for $\xi \in F_r$, 
 $\max \Delta < \xi$;
 \item $L^{\xi^i_{\alpha_j}}_s = L^{\xi^i_{\alpha_0}}_r \cup \{n_{j+1}\}$ for $i<\ell$, $0<j< k$;
\item $L^{\xi^i_{\alpha_k}}_s = L^{\xi^i_{\alpha_0}}_r$ for $i<\ell$.
\end{itemize}
It is not difficult to verify that $s\in \mathbb{P}$ and $s<r$, $s<p_{\alpha_j}$ for each $j\leq k$.
Now \[ s \Vdash \left\{\alpha_0,\alpha_1,\ldots, \alpha_k, \beta\right\} \subseteq \dot{X}.\]
Moreover $L^{\alpha_j}_s \setminus L^\beta_s = \{n_0\}$ for each $j \leq k$, and so $s\Vdash L_{\alpha_j} \setminus L_\beta = \{n_0\}$.
This is a contradiction with $s<r$.
\end{proof}

\end{example}

Theorem \ref{David0} together with Proposition \ref{half-is-special} immediately gives us the corollary promised in the previous section:

\begin{cor}
There is an $\om_1$-tower equivalent neither to a Hausdorff nor to a Suslin tower in models obtained by adding $\om_1$ Cohen reals. 
\end{cor}

\begin{remark} Perhaps the left half of the gap constructed by Hirschorn in \cite{hirschorn} also has the above property. Hirschorn showed that in the model obtained by adding $\om_1$ random reals, one can generically add a gap $(L_\alpha, R_\alpha)_{\alpha<\om_1}$ which is left-oriented but not
Hausdorff. Hence $(L_\alpha)_{\alpha<\om_1}$ cannot be equivalent to a Suslin tower. To show that $(L_\alpha, R_\alpha)_{\alpha<\om_1}$ is not Hausdorff, Hirschorn used a certain fact based on Gilles theorem (\cite[Lemma 5.5]{hirschorn}). This fact
can be immediately modified for the case of towers in the following way. Assume that $(\mathcal{R},\lambda)$ is the random algebra with the standard measure and $(\dot{T}_\alpha)_{\alpha<\om_1}$ is an $\mathcal{R}$-name for a tower. If there is a function $h\colon \om\to \mathbb R^+$ converging to $0$ such that 
	\[ \lambda\left(\Arrowvert \dot{T}_\alpha \sub \dot{T}_\beta \cup n \Arrowvert\right) \leq h(n) \]
	for each $\alpha<\beta<\om_1$ and $n< \om$, then $(\dot{T}_\alpha)_{\alpha<\om_1}$ is not Hausdorff. (Here $\Arrowvert \varphi \Arrowvert$ represents the Boolean value of the sentence $\varphi$). However, it does not seem that
	$(L_\alpha)_{\alpha<\om_1}$ satisfies this condition for any $h\colon \om\to \mathbb R^+$ converging to $0$.
\end{remark}



\section{Towards a structure theory: Tukey order on towers}\label{Tukey}

Throughout this section we deal only with towers of length $\om_1$. As we have seen, we can single out several classes of towers defined by their ``inclusion structure''.  It is natural to ask if we can go further in this analysis. A research of this kind was done for
ultrafilters in \cite{Natasha}, using the classification of Tukey types.

We present here basic facts concerning Tukey order. See \cite{directed-stevo, Natasha} for more details and for the complete bibliography.

\begin{defi}
	Let $\mathcal{D}$ and $\mathcal{E}$ be directed sets. A function 
	$g \colon \mathcal{D} \to \mathcal{E}$ is \emph{Tukey} if the image of every unbounded subset of $\mathcal{D}$ is unbounded in $\mathcal{E}$.
	In such case, we say that $\mathcal{E}$ is \emph{Tukey above} $\mathcal{D}$ ($\mathcal{D} \leq_T \mathcal{E}$).
	If $\mathcal{D} \geq_T \mathcal{E} \geq_T \mathcal{D}$, then  $\mathcal{D}$ and $\mathcal{E}$ are said to be \emph{Tukey equivalent}, $\mathcal{D} \equiv \mathcal{E}$.
\end{defi}

\begin{prop} \label{basic-Tukey}
	If $\mathcal{D}$, $\mathcal{E}$ are directed posets such that $\mathcal{D}$ is a cofinal subset of $\mathcal{E}$, then $\mathcal{D}\equiv \mathcal{E}$.
\end{prop}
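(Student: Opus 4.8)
The plan is to verify both inequalities $\mathcal{D} \leq_T \mathcal{E}$ and $\mathcal{E} \leq_T \mathcal{D}$ by exhibiting an explicit Tukey map in each direction, and then to invoke the definition of Tukey equivalence. The only point that needs care throughout is that ``unbounded in $\mathcal{D}$'' refers to bounds lying in $\mathcal{D}$, while ``unbounded in $\mathcal{E}$'' permits bounds from all of $\mathcal{E}$; the cofinality hypothesis is exactly what lets us pass between these two notions of boundedness.

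For $\mathcal{D} \leq_T \mathcal{E}$ I would take the inclusion $\iota\colon \mathcal{D} \to \mathcal{E}$ and check it is Tukey by contraposition. Suppose $A \sub \mathcal{D}$ and its image $\iota(A) = A$ is bounded in $\mathcal{E}$, say by some $e \in \mathcal{E}$. Since $\mathcal{D}$ is cofinal in $\mathcal{E}$, there is $d \in \mathcal{D}$ with $e \leq d$; then $a \leq e \leq d$ for every $a \in A$, so $d$ bounds $A$ already inside $\mathcal{D}$. Equivalently, if $A$ is unbounded in $\mathcal{D}$ then its image is unbounded in $\mathcal{E}$, which is precisely the Tukey condition.

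For $\mathcal{E} \leq_T \mathcal{D}$ I would use cofinality to choose, for each $e \in \mathcal{E}$, an element $g(e) \in \mathcal{D}$ with $e \leq g(e)$ (this step invokes the axiom of choice). To see that $g\colon \mathcal{E} \to \mathcal{D}$ is Tukey, argue contrapositively once more: if $g(B)$ is bounded in $\mathcal{D}$ by some $d \in \mathcal{D} \sub \mathcal{E}$, then $b \leq g(b) \leq d$ for every $b \in B$, so $B$ is bounded in $\mathcal{E}$ by $d$. Hence unbounded subsets of $\mathcal{E}$ have unbounded images in $\mathcal{D}$.

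Having produced Tukey maps in both directions, the definition yields $\mathcal{D} \leq_T \mathcal{E} \leq_T \mathcal{D}$, that is $\mathcal{D} \equiv \mathcal{E}$. I do not expect a genuine obstacle: the argument is purely formal. The one place that deserves attention is the twofold use of cofinality, which is what converts a bound witnessed in the larger poset $\mathcal{E}$ into a bound witnessed inside the cofinal subset $\mathcal{D}$.
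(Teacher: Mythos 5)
Your proof is correct. The paper states this proposition as a known basic fact without giving a proof (it refers the reader to the literature on Tukey order), and your argument --- the inclusion map witnessing $\mathcal{D}\leq_T\mathcal{E}$ and a choice function $g$ with $e\leq g(e)\in\mathcal{D}$ witnessing $\mathcal{E}\leq_T\mathcal{D}$, with cofinality used in both directions to convert bounds in $\mathcal{E}$ into bounds in $\mathcal{D}$ --- is precisely the standard argument the paper implicitly relies on.
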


\begin{thm}(see \cite{directed-stevo}) \label{directed}
Let $D$ be a directed poset of size at most $\om_1$. Then either $D\equiv 1$, or $D\equiv \omega$, 
or $D\equiv \omega_1$, or $[\om_1]^{<\om} \geq_T D \geq_T \om\times\om_1$. Moreover, under $\mathsf{PFA}$ there are no Tukey types in between
$\om\times\om_1$ and $[\om_1]^{<\om}$.
\end{thm}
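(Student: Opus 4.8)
The plan is to drive the whole classification by the two cardinal invariants $\add(D)$ (the least size of an unbounded subset) and $\cof(D)$ (the least size of a cofinal subset), and then to isolate the genuine use of $\mathsf{MA}$ in the last clause. Since $|D|\le\om_1$ we have $\cof(D)\in\{1,\om,\om_1\}$. First I would clear the small cofinalities: if $\cof(D)=1$ then $D$ has a maximum and $D\equiv 1$; if $\cof(D)=\om$ then $D$ has a countable cofinal subset, which is a countable directed set with no maximum and hence Tukey equivalent to $\om$, so by Proposition \ref{basic-Tukey} (cofinal subsets are Tukey equivalent) $D\equiv\om$. From here on $\cof(D)=\om_1$.

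With $\cof(D)=\om_1$ fixed, the dividing line is $\add(D)$. If $\add(D)=\om_1$, i.e. $D$ is $\sigma$-directed, I would build by recursion a strictly increasing cofinal chain $\langle e_\xi:\xi<\om_1\rangle$: at stage $\xi$ take an upper bound of $e_\xi$ and the $\xi$-th element of a fixed cofinal set, and at limits use $\sigma$-directedness to bound the countably many earlier terms; no countable initial segment can be cofinal, as that would give $\cof(D)\le\om$. This chain is cofinal of order type $\om_1$, so $D\equiv\om_1$ by Proposition \ref{basic-Tukey}. If instead $\add(D)=\om$, I claim $[\om_1]^{<\om}\geq_T D\geq_T\om\times\om_1$. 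The upper bound holds for any directed set of size $\le\om_1$: fixing a cofinal $\{c_\alpha:\alpha<\om_1\}$ and sending $F\in[\om_1]^{<\om}$ to an upper bound of $\{c_\alpha:\alpha\in F\}$ is a monotone cofinal map $[\om_1]^{<\om}\to D$. For the lower bound, take an increasing unbounded sequence $\langle a_n:n<\om\rangle$ (available since $\add(D)=\om$) and set $f(n,\alpha)$ to be an upper bound of $a_n$ and $c_\alpha$; any unbounded subset of $\om\times\om_1$ has infinite first projection or cofinal second projection, and in either case $f$ sends it to an unbounded subset of $D$, so $f$ is a Tukey map witnessing $\om\times\om_1\leq_T D$.

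For the final clause I would reduce ``no type strictly between $\om\times\om_1$ and the top type $[\om_1]^{<\om}$'' to one combinatorial statement: under $\mathsf{MA}$, any $D$ with $\add(D)=\om$, $\cof(D)=\om_1$ which is \emph{not} Tukey equivalent to $\om\times\om_1$ contains an uncountable \emph{Tukey-independent} family, i.e. an uncountable $X\sub D$ every infinite subset of which is unbounded. Such an $X$ yields a Tukey map $[\om_1]^{<\om}\to D$ (finite subfamilies are bounded by directedness, infinite ones are not), giving $[\om_1]^{<\om}\leq_T D$ and, with the upper bound above, $D\equiv[\om_1]^{<\om}$. The natural device is a poset of finite partial independent families with finite side-conditions, designed so that the generic object is an uncountable independent family; one then performs a $\Delta$-system reduction exactly as in the ccc proofs of Section \ref{special-towers} and applies $\mathsf{MA}(\om_1)$ to the $\om_1$ dense sets forcing the family to grow.

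The \textbf{main obstacle} is precisely the last point: proving this poset is ccc exactly when $D\not\equiv\om\times\om_1$. This is where the hypothesis is genuinely used and where the argument stops being bookkeeping, because one must turn the failure of the $\om\times\om_1$ type into a Ramsey/oscillation-type fact guaranteeing that, after the $\Delta$-system reduction, two conditions can always be amalgamated into a larger independent family; if amalgamation failed cofinally often, the bounding structure of $D$ would be captured by an $\om$-indexed family of $\om_1$-chains, forcing $D\equiv\om\times\om_1$. Establishing this amalgamation (equivalently, the ccc) step, rather than the cardinal-invariant case analysis, is the substantive content of Theorem \ref{directed} and the part I would expect to require real work.
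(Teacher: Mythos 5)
First, a remark on the comparison itself: the paper contains no proof of Theorem \ref{directed} --- it is imported verbatim from Todor\v{c}evi\'c's \emph{Directed sets and cofinal types} \cite{directed-stevo} --- so your proposal has to be judged on its own merits rather than against an in-paper argument. Your $\mathsf{ZFC}$ case analysis via $\cof(D)\in\{1,\om,\om_1\}$ and $\add(D)\in\{\om,\om_1\}$ is the standard route and is essentially correct, with one genuine misstep: in verifying $\om\times\om_1\leq_T D$, in the case of an unbounded $X\sub\om\times\om_1$ whose first projection is finite, you conclude that $\{c_\alpha\colon \alpha\in A\}$ is unbounded whenever $A$ is cofinal in $\om_1$. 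For an arbitrary enumeration of an arbitrary cofinal family this is false: in $D=[\om_1]^{<\om}$ list a cofinal family on the even indices and put $c_\alpha=\emptyset$ for odd $\alpha$; then $X=\{(0,\alpha)\colon \alpha \text{ odd}\}$ is unbounded in $\om\times\om_1$, yet your $f$ may send it to the single point $f(0,\alpha)=a_0$. The repair is short but necessary: fixing a cofinal $\{d_\alpha\colon\alpha<\om_1\}$, recursively let $c_\alpha$ be an upper bound of $d_\alpha$ and of some $x_\alpha$ with $x_\alpha\not\leq d_\gamma$ for all $\gamma<\alpha$ (such $x_\alpha$ exists because a countable set cannot be cofinal when $\cof(D)=\om_1$). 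Then $c_\alpha\leq d\leq d_\gamma$ forces $\alpha\leq\gamma$, so each $d\in D$ bounds only countably many $c_\alpha$, and your map $f$ is now honestly Tukey. The other steps (the $\cof=1,\om$ cases, the cofinal $\om_1$-chain when $\add(D)=\om_1$, and the upper bound $D\leq_T[\om_1]^{<\om}$ via $F\mapsto$ an upper bound of $\{c_\alpha\colon\alpha\in F\}$, which maps cofinal sets to cofinal sets even though it need not be monotone) are fine.

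The $\mathsf{MA}$ clause is where the proposal stops being a proof. Your reduction is correct and is exactly the device the paper itself uses in Theorem \ref{Hausdorff=top}: an uncountable $X\sub D$ every infinite subset of which is unbounded yields $[\om_1]^{<\om}\leq_T D$, hence $D\equiv[\om_1]^{<\om}$ together with the upper bound. But the dichotomy you then invoke --- under $\mathsf{MA}(\om_1)$, either $D\equiv\om\times\om_1$ or the poset of finite independent families is ccc and forces such an $X$ --- \emph{is} Todor\v{c}evi\'c's theorem; you leave its only hard point, the ccc/amalgamation lemma, as an admitted obstacle supported by a one-sentence heuristic. To turn that heuristic into mathematics you would need at least: (i) the $\mathsf{ZFC}$ characterization that, given $\add(D)=\om$ and $\cof(D)=\om_1$, one has $D\equiv\om\times\om_1$ exactly when $D$ is a countable union of pieces every countable subset of which is bounded in $D$ (equivalently, $D$ has a cofinal subset covered by countably many increasing $\om_1$-chains); and (ii) an actual argument extracting, from the failure of this covering, the amalgamation of two conditions after the $\Delta$-system reduction --- note that naive side-condition posets of this kind are \emph{not} ccc in general, so the hypothesis $D\not\equiv\om\times\om_1$ must enter the combinatorics, not just the bookkeeping. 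Since you flag this yourself, the fair verdict is: correct skeleton, correct identification of where the substance lies, but the final clause is a plan rather than a proof, and the $\om\times\om_1$ lower bound requires the enumeration fix above. (Incidentally, the $[\om_1]^{\om}$ in the statement is a typo in the paper for $[\om_1]^{<\om}$, as your reading correctly assumes.)
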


We have to agree on which emanations of towers we want to examine. Towers ordered by ``$\sub$'' are not satisfactory because we do not really want to pay attention to finite modifications of levels. It is also more convenient to deal with directed
sets. Structure theory for non-directed posets is available (see \cite{oriented-stevo}), but seems to be a bit cumbersome. The right structure to study seems to be the ideal generated by the tower (and all finite subsets of $\omega$). 
As before, we denote it by $\langle\mathcal{T}\rangle$ for a given tower $\mathcal{T}$, this time understanding it as the structure $(\langle\mathcal{T}\rangle, \sub)$.
The only inconvenience is that  $\langle\mathcal{T}\rangle$ has cardinality continuum. For this reason we also consider a cofinal directed subset of $(\langle\mathcal{T}\rangle,\sub)$ consisting of finite modifications of elements of $\mathcal T$,
 \[\langle\mathcal{T}\rangle_* = \{T \cup n \colon T \in \mathcal T, n \in \om\}.\]






\begin{defi}[\cite{directed-stevo}]
	Let $\mathcal D $ be a directed poset of cardinality $\om_1$. We say that $\mathcal D$ has property $(\dagger)$ if every 
	uncountable subset of $\mathcal D$ contains a countable unbounded subset.
\end{defi}

It is easy to see that if $\mathcal D$ has $(\dagger)$, then $\om\times\om_1 <_T \mathcal D$.

\begin{thm}[\cite{directed-stevo}]\label{MA-Tukey}
	Assume $\mathsf{MA}_{\om_1}$. If a directed poset $\mathcal D$ of cardinality $\om_1$ has $(\dagger)$, then $\mathcal D\equiv [\om_1]^{<\om}$.
\end{thm}

\begin{prop}\label{tower-dagger}
	The poset $\langle\mathcal T \rangle_*$ has property $(\dagger)$ for every tower $\mathcal T$.
\end{prop}

\begin{proof}
	Let $\mathcal S$ be an uncountable subset of $\langle\mathcal T \rangle_*$. We can assume that $\mathcal S$ is an
	increasing tower cofinal in $\mathcal T$, $\mathcal S = \{S_\alpha \colon \alpha < \om_1\}$. Suppose that for each $\beta<\om_1$ the set
	$\{S_\alpha \colon \alpha < \beta\}$ is bounded by an element of $\langle\mathcal S \rangle_*$. 
	In particular, this means that  $\left(\bigcup_{\alpha < \beta} S_\alpha\right)_{\beta<\om_1}$ does not stabilize.
	Hence $\left(\bigcup_{\alpha < \beta} S_\alpha\right)_{\beta<\omega_1}$ is an uncountable strictly increasing $\sub$-chain, a contradiction.
\end{proof}


Theorem \ref{MA-Tukey} now implies that under $\mathsf{MA}_{\omega_1}$ there is only one Tukey type of $\om_1$-towers.
\begin{cor} \label{MA-top} 
 Every ideal generated by a tower is Tukey top under $\mathsf{MA}_{\omega_1}$. 
\end{cor}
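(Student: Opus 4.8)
The plan is to combine Proposition~\ref{om-om_1} with the dichotomy in Theorem~\ref{directed}. The corollary asserts that under $\mathsf{MA}$ every ideal $\langle\mathcal{T}\rangle$ generated by an $\om_1$-tower sits at the top of the Tukey hierarchy of directed sets of size $\om_1$, i.e.\ that $\langle\mathcal{T}\rangle \equiv [\om_1]^{<\om}$.

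First I would observe that $\langle\mathcal{T}\rangle$ is a directed poset (it is an ideal, closed under finite unions) of cardinality $\om_1$, so Theorem~\ref{directed} applies to it. The theorem offers exactly four possibilities: $\langle\mathcal{T}\rangle$ is Tukey equivalent to $1$, to $\om$, to $\om_1$, or it lies in the band $[\om_1]^{<\om} \geq_T \langle\mathcal{T}\rangle \geq_T \om\times\om_1$. The first three alternatives are the ones I must rule out. Since $\mathcal{T}$ is an uncountable tower, $\langle\mathcal{T}\rangle$ contains the strictly $\sub^*$-increasing sequence $(T_\alpha)_{\alpha<\om_1}$, hence it is unbounded and uncountable; this already excludes $\langle\mathcal{T}\rangle \equiv 1$ and $\langle\mathcal{T}\rangle \equiv \om$ (both of those have countable cofinality). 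To exclude $\langle\mathcal{T}\rangle \equiv \om_1$, I would invoke Proposition~\ref{om-om_1}, which gives the strict inequality $\om\times\om_1 <_T \langle\mathcal{T}\rangle$; since $\om\times\om_1 \equiv \om_1$ as directed sets, this says $\om_1 <_T \langle\mathcal{T}\rangle$ strictly, so $\langle\mathcal{T}\rangle \not\equiv \om_1$.

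Having eliminated the first three options, the fourth must hold: $[\om_1]^{<\om} \geq_T \langle\mathcal{T}\rangle \geq_T \om\times\om_1$. Now I would bring in the ``moreover'' clause of Theorem~\ref{directed}, which under $\mathsf{MA}$ asserts that there are no Tukey types strictly between $\om\times\om_1$ and $[\om_1]^{<\om}$ (here $[\om_1]^{\om}$ and $[\om_1]^{<\om}$ coincide at the top). Combined with Proposition~\ref{om-om_1}, which forbids $\langle\mathcal{T}\rangle \equiv \om\times\om_1$, the only remaining possibility in this band is $\langle\mathcal{T}\rangle \equiv [\om_1]^{<\om}$, the Tukey top. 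This establishes the corollary.

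The argument is essentially immediate once the two prior results are in hand, so there is no serious obstacle; the only point requiring slight care is the identification $\om\times\om_1 \equiv \om_1$ (so that Proposition~\ref{om-om_1} reads as strictness over the $\om_1$ type) and the bookkeeping that the strict inequality of Proposition~\ref{om-om_1} is exactly what blocks the bottom endpoint of the remaining band. Everything else is a direct appeal to the classification in Theorem~\ref{directed} under $\mathsf{MA}$.
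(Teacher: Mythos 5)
Your route is the paper's own: the paper derives this corollary in a single sentence from Proposition~\ref{om-om_1} together with the ``moreover'' clause of Theorem~\ref{directed}, and your write-up merely makes the case analysis explicit. There is, however, one genuinely false auxiliary claim in your argument: $\om\times\om_1$ is \emph{not} Tukey equivalent to $\om_1$. Indeed, $\om\times\{0\}$ is unbounded in $\om\times\om_1$ but countable, so its image under any map into $\om_1$ is bounded; hence $\om\times\om_1 \not\leq_T \om_1$, and in Theorem~\ref{directed} these are two distinct cofinal types ($\om\times\om_1$ is the least upper bound of $\om$ and $\om_1$, strictly above both). The damage is local and easily repaired: to exclude $\langle\mathcal{T}\rangle \equiv \om_1$ you need only Proposition~\ref{om-om_1} plus the true fact $\om\times\om_1 \not\leq_T \om_1$, since $\langle\mathcal{T}\rangle \equiv \om_1$ would give $\om\times\om_1 \leq_T \om_1$. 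In fact you need not treat the first three alternatives separately at all: $\om\times\om_1 \leq_T \langle\mathcal{T}\rangle$ already rules out the types $1$, $\om$, and $\om_1$ in one stroke (as $\om\times\om_1$ is Tukey below none of them), the strictness in Proposition~\ref{om-om_1} rules out the bottom of the remaining band, and the $\mathsf{MA}$ clause then forces $\langle\mathcal{T}\rangle \equiv [\om_1]^{<\om}$ --- which is exactly how the paper's one-line proof reads. A second, harmless slip: your parenthetical that $[\om_1]^{\om}$ and $[\om_1]^{<\om}$ ``coincide at the top'' is also wrong, since $([\om_1]^{\om},\sub)$ has the countable ordinals as a cofinal $\om_1$-chain and is therefore Tukey equivalent to $\om_1$; the occurrence of $[\om_1]^{\om}$ in the statement of Theorem~\ref{directed} should simply be read as a typo for $[\om_1]^{<\om}$. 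With these two corrections your argument is complete and coincides with the paper's.
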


This should be contrasted with the following.

\begin{thm}
	Assuming $2^{\omega_1} > \omega_2$ and $\mathsf{CH}$. There are $2^{\mathfrak c}$ many incomparable Tukey classes represented by tower ideals.
\end{thm}
\begin{proof}
	According to \cite[Corollary 23]{Natasha}, if $2^{\omega_1} > \omega_2$, then there are $2^{\mathfrak c}$ many incomparable Tukey types of P-points.
Each P-point is generated by a tower filter (which is its cofinal subset). Now use Proposition \ref{basic-Tukey}.
\end{proof}

\begin{thm}\label{Hausdorff=top}
	A tower $\mathcal T$ is Hausdorff iff $\langle\mathcal T\rangle  \equiv [\om_1]^{<\om}$.
\end{thm}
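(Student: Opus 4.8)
The plan is to observe that one of the two Tukey inequalities is automatic and to reduce the other to a concrete combinatorial condition. Since the sets $T_\alpha \cup s$ ($\alpha<\om_1$, $s\in[\om]^{<\om}$) form a cofinal subset of $(\langle\mathcal T\rangle,\sub)$ of size $\om_1$, Proposition~\ref{basic-Tukey} together with Theorem~\ref{directed} gives $\langle\mathcal T\rangle\le_T[\om_1]^{<\om}$ for every tower, so $\langle\mathcal T\rangle\equiv[\om_1]^{<\om}$ holds exactly when $[\om_1]^{<\om}\le_T\langle\mathcal T\rangle$. It is this last relation I would isolate by the following reformulation: $[\om_1]^{<\om}\le_T\langle\mathcal T\rangle$ holds if and only if there is a family $\{B_\alpha:\alpha<\om_1\}\sub\langle\mathcal T\rangle$ such that $\bigcup_{\alpha\in I}B_\alpha\notin\langle\mathcal T\rangle$ for every infinite $I\sub\om_1$. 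The ``only if'' follows by setting $B_\alpha=g(\{\alpha\})$ for a Tukey map $g$ and noting that $\{\,\{\alpha\}:\alpha\in I\,\}$ is unbounded in $[\om_1]^{<\om}$ whenever $I$ is infinite, so its image has no common upper bound, i.e.\ $\bigcup_{\alpha\in I}B_\alpha\notin\langle\mathcal T\rangle$. Conversely, given such $\{B_\alpha\}$, the map $g(F)=\bigcup_{\alpha\in F}B_\alpha$ takes values in the ideal and is Tukey, since an unbounded $\mathcal A\sub[\om_1]^{<\om}$ has $\bigcup\mathcal A=I$ infinite and $\bigcup g''\mathcal A=\bigcup_{\alpha\in I}B_\alpha\notin\langle\mathcal T\rangle$, so $g''\mathcal A$ is unbounded.

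For the forward direction I would pass to a cofinal subtower satisfying condition~(H) (which generates the same ideal) and take $B_\alpha=T_\alpha$. To see that infinite unions escape, fix an infinite $I\sub\om_1$ and any $\zeta<\om_1$. If some $\xi\in I$ exceeds $\zeta$, then $T_\xi\not\sub^*T_\zeta$; otherwise $I\cap\zeta$ is infinite, and condition~(H) at level $\zeta$ says that for each $n$ only finitely many $\xi\in I\cap\zeta$ satisfy $T_\xi\setminus T_\zeta\sub n$, so $\bigcup_{\xi\in I}(T_\xi\setminus T_\zeta)$ meets every final segment of $\om$ and is infinite. Either way $\bigcup_{\xi\in I}T_\xi\not\sub^*T_\zeta$, and as $\zeta$ was arbitrary, $\bigcup_{\xi\in I}T_\xi\notin\langle\mathcal T\rangle$.

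The backward direction is where I expect the real work: a witnessing family need only satisfy $B_\alpha\sub^*T_{h(\alpha)}$, and the finite discrepancies $B_\alpha\setminus T_{h(\alpha)}$ could a priori accumulate to an infinite set along some infinite $I$. The step that neutralises this is a preliminary refinement. Each discrepancy is bounded by some $m_\alpha\in\om$, so I would first pass to an uncountable set on which $m_\alpha=m$ is constant, giving $B_\alpha\sub T_{h(\alpha)}\cup m$ with $m$ fixed. On this set the escaping property forces $\{\alpha:h(\alpha)\le\gamma\}$ to be countable for every $\gamma$ (else an infinite subfamily would have union $\sub T_\gamma\cup m\in\langle\mathcal T\rangle$), so I can thin out further to an uncountable $A$ with $h\upharpoonright A$ strictly increasing and reindex as $(T_{\gamma_\xi})_{\xi<\om_1}$ with $B_{\alpha_\xi}\sub T_{\gamma_\xi}\cup m$. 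I would then show this subtower satisfies~(H): if not, there are $\eta,n$ and an infinite $J$ with $T_{\gamma_\xi}\sub T_{\gamma_\eta}\cup n$ for $\xi\in J$, whence $\bigcup_{\xi\in J}B_{\alpha_\xi}\sub T_{\gamma_\eta}\cup\max(n,m)=^*T_{\gamma_\eta}\in\langle\mathcal T\rangle$, contradicting the escaping property along the infinite index set $\{\alpha_\xi:\xi\in J\}$. Hence $(T_{\gamma_\xi})$ is an (H)-subtower of $\mathcal T$, so $\mathcal T$ is Hausdorff, which completes the equivalence.
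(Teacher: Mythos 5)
Your proof is correct, and in the backward direction it takes a genuinely different route from the paper's. The forward direction is the paper's argument in different packaging: the paper fixes a cofinal subtower $\mathcal{H}$ with condition (H), shows every infinite subset of $\mathcal{H}$ is unbounded in $\langle\mathcal{T}\rangle$, and observes that any injective map from $[\om_1]^{<\om}$ into $\mathcal{H}$ is then Tukey; your ``infinite unions escape the ideal'' property is the same fact, witnessed by the monotone map $F\mapsto\bigcup_{\alpha\in F}T_\alpha$ instead. (You also make explicit, via Proposition \ref{basic-Tukey} and Theorem \ref{directed}, the inequality $\langle\mathcal{T}\rangle\leq_T[\om_1]^{<\om}$, which the paper leaves implicit.) The real divergence is in the backward direction. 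The paper assumes, without loss of generality, that the singleton images $f(\{\alpha\})$ themselves form a $\sub^*$-increasing tower $\mathcal{S}$, proves $\mathcal{S}$ satisfies (H), asserts that $\mathcal{S}$ and $\mathcal{T}$ generate the same ideal, and closes by invoking Proposition \ref{Hausdorff-equivalent}; note that this WLOG silently requires enlarging the images so that they absorb cofinally many generators $T_\gamma$, since an unbounded $\sub^*$-increasing chain in the ideal need not be cofinal in it. You instead dominate each $B_\alpha$ by a generator, $B_\alpha\sub T_{h(\alpha)}\cup m$, and transfer any failure of (H) for $(T_{h(\alpha)})$ back to a bounded infinite subfamily of the $B_\alpha$'s. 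This lands the (H)-subtower inside $\mathcal{T}$ itself, so you need neither the equivalence-invariance proposition nor the massage of $f$; your argument is more self-contained on this point.

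One step needs repair. In your claim that $\{\alpha\colon h(\alpha)\leq\gamma\}$ is countable, the parenthetical justification (``else an infinite subfamily would have union $\sub T_\gamma\cup m\in\langle\mathcal{T}\rangle$'') does not follow as written: $h(\alpha)\leq\gamma$ only gives $T_{h(\alpha)}\sub^* T_\gamma$, and the finite errors $T_{h(\alpha)}\setminus T_\gamma$ vary with $\alpha$, so along an infinite subfamily they may accumulate to an infinite set and the union need not lie below $T_\gamma\cup m$. The conclusion is nevertheless true, and the fix is a pigeonhole entirely in the spirit of your $m_\alpha$-thinning: if the set were uncountable then, since $[0,\gamma]$ is countable, some single $\delta\leq\gamma$ would have uncountable preimage under $h$, and for those $\alpha$ one has the genuine inclusion $B_\alpha\sub T_\delta\cup m$, so any infinite subfamily is bounded by $T_\delta\cup m\in\langle\mathcal{T}\rangle$, contradicting the escaping property. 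Your final transfer step is immune to this issue, since a failure of (H) yields genuine inclusions $T_{\gamma_\xi}\sub T_{\gamma_\eta}\cup n$ rather than almost-inclusions.
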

\begin{proof}
Let $\mathcal H$ be a cofinal subtower of $\mathcal T$ satisfying (H).
We show that each infinite subset of $\mathcal H$ is unbounded in $\langle\mathcal T\rangle_*$ 
(and hence any injective map from $[\om_1]^{<\om}$ into $\mathcal H$ is a Tukey function from $[\om_1]^{<\om}$ to $\langle\mathcal T\rangle_* $).
Pick any countable set $A = \{T_\alpha \colon \alpha \in I\} \subseteq \mathcal H$ 
and suppose that $X \in \langle\mathcal T\rangle_*$ is an upper bound of $A$.
There is some $T_\beta \in \mathcal H$, $\sup I < \beta$, and $n < \om$ such that $X \subseteq T_\beta \cup n$.
The set $\{\alpha \in I \colon T_\alpha \subseteq T_\beta \cup n\}$ is finite since $\mathcal H$ satisfies (H).
Thus there is some $\alpha \in I$ such that $T_\alpha \nsubseteq T_\beta \cup n$, and hence $T_\alpha \nsubseteq X$. A contradiction.

For the other direction, consider a Tukey map $f\colon [\om_1]^{<\om} \to \langle\mathcal T\rangle_*$.
We may suppose without loss of generality that $f(\{\beta\}) \setminus f(\{\alpha\})$ is infinite iff $\alpha < \beta < \om_1$.
We show that the tower $\mathcal S = (f(\{\alpha\}))_{\alpha<\om_1}$ satisfies condition (H).
Suppose that for some $\beta < \om_1$ and $n < \om$ the set 
$A = \{\alpha<\beta \colon f(\{\alpha\}) \setminus f(\{\beta\}) \subseteq n \}$ is infinite.
Then $\{f(\{\alpha\})\colon \alpha\in A\}$ is bounded by $f(\{\beta\})\cup n$ in $\langle\mathcal T\rangle_*$, a contradiction.
Notice that the towers $\mathcal T$ and $\mathcal S$ generate the same ideal, so $\mathcal T$ is Hausdorff.
\end{proof}

\begin{prop}
Consistently, there are Suslin towers $\mathcal{T}^0$, $\mathcal{T}^1$ such that $\langle\mathcal{T}^0 \rangle \times \langle \mathcal{T}^1\rangle$ is Tukey top.
\end{prop}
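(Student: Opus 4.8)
The plan is to reduce the statement, via Theorem \ref{Hausdorff=top} and Theorem \ref{directed}, to the construction of two Suslin towers whose ``disjoint sum'' is Hausdorff, and then to produce such towers by a forcing equivalent to $\mathbb{C}_{\om_1}$, in the spirit of Examples \ref{gen-haus} and \ref{special-nonHaus}.

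First I would isolate the following reduction. Given towers $\mathcal{T}^0=(T^0_\alpha)_{\alpha<\om_1}$ and $\mathcal{T}^1=(T^1_\alpha)_{\alpha<\om_1}$, put $S_\alpha=(T^0_\alpha\times\{0\})\cup(T^1_\alpha\times\{1\})\sub\om\sqcup\om$ and let $\mathcal{S}=(S_\alpha)_{\alpha<\om_1}$ be the resulting tower. I claim that if $\mathcal{S}$ is Hausdorff, then $\langle\mathcal{T}^0\rangle\times\langle\mathcal{T}^1\rangle$ is Tukey top, i.e.\ $\equiv[\om_1]^{<\om}$. The inequality $\langle\mathcal{T}^0\rangle\times\langle\mathcal{T}^1\rangle\leq_T[\om_1]^{<\om}$ is automatic: the product has a cofinal subset of size $\om_1$, so Theorem \ref{directed} applies. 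For the reverse, fix a cofinal $X\sub\om_1$ such that $(S_\alpha)_{\alpha\in X}$ satisfies (H) and consider the monotone map $e\colon[X]^{<\om}\to\langle\mathcal{T}^0\rangle\times\langle\mathcal{T}^1\rangle$ sending $F$ to $\left(\bigcup_{\alpha\in F}T^0_\alpha,\ \bigcup_{\alpha\in F}T^1_\alpha\right)$. Since an unbounded $U\sub[X]^{<\om}$ has $\bigcup U$ infinite, monotonicity reduces the Tukey property of $e$ to checking that for every infinite $A\sub X$ the family $\{(T^0_\alpha,T^1_\alpha)\colon\alpha\in A\}$ is unbounded in the product. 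If it were bounded, then, as both towers are $\sub^*$-increasing, there would be a single $\beta\in X$ and $n<\om$ with $T^0_\alpha\sub T^0_\beta\cup n$ and $T^1_\alpha\sub T^1_\beta\cup n$ for all $\alpha\in A$; identifying $\om\sqcup\om$ with $\om$ this gives $S_\alpha\sub S_\beta\cup 2n$. But any $\alpha\in A$ with $\alpha>\beta$ has $S_\alpha\sm S_\beta$ infinite, so $A\sub\{\alpha\in X\colon\alpha<\beta,\ S_\alpha\sm S_\beta\sub 2n\}$, which is finite by (H) --- a contradiction. As $[X]^{<\om}\equiv[\om_1]^{<\om}$, this yields $[\om_1]^{<\om}\leq_T\langle\mathcal{T}^0\rangle\times\langle\mathcal{T}^1\rangle$.

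Thus it suffices to force, by a ccc forcing, two Suslin towers $\mathcal{T}^0,\mathcal{T}^1$ with $\mathcal{S}$ Hausdorff. There is no tension in principle: a Hausdorff (hence special) tower $\mathcal{S}$ only needs, for each newly appearing pair $\xi<\alpha$, a witness in $S_\xi\sm S_\alpha$, and this witness may live in either copy of $\om$; so one is free to make a pair $\sub$-comparable in one coordinate provided the other coordinate supplies the witness. Concretely I would use conditions $p=(F_p,n_p,\langle T^{0,\alpha}_p,T^{1,\alpha}_p\rangle_{\alpha\in F_p})$ with $F_p\in[\om_1]^{<\om}$ and $T^{0,\alpha}_p,T^{1,\alpha}_p\sub n_p$, ordered so that (as in Example \ref{hechler-tower}) initial segments are preserved and new points of $T^{i,\alpha}$ flow up into $T^{i,\beta}$ for $\alpha<\beta$, and so that (as in the Hausdorff restriction of Example \ref{gen-haus}) whenever $\xi\in F_q\sm F_p$ and $\alpha\in F_p$ with $\xi<\alpha$ there is some $j\in[n_p,n_q)$ with $j\in T^{0,\xi}_q\sm T^{0,\alpha}_q$ or $j\in T^{1,\xi}_q\sm T^{1,\alpha}_q$. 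The generic objects $\mathcal{T}^0,\mathcal{T}^1$ are genuine towers, and by the usual finite-predecessor argument $\mathcal{S}$ satisfies (H); so $\mathcal{S}$ is Hausdorff. Running over any ground model, this gives the desired consistency (and in fact lands in the ``$\om_1$ Cohen reals'' model).

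To see that each $\mathcal{T}^i$ is Suslin, I would run the standard argument: given a name $\dot X$ for an uncountable subset of $\om_1$ and conditions $p_\alpha\Vdash\alpha\in\dot X$, thin out to a $\Delta$-system with constant initial data and find $\alpha<\beta$ for which a common extension $q\leq p_\alpha,p_\beta$ can be built forcing $\alpha,\beta\in\dot X$ and $T^i_\alpha\sub T^i_\beta$, while placing the required $\mathcal{S}$-witness for $(\alpha,\beta)$ (and for every other newly coexisting pair) in the \emph{other} coordinate. The same construction of a common extension, with no comparability demanded, shows that $\mathbb{P}$ is ccc; and, arguing as in Proposition \ref{equivalent-to-Cohen} and Example \ref{special-nonHaus}, $\mathbb{P}$ is equivalent to $\mathbb{C}_{\om_1}$ via pseudo-projections. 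The main obstacle is exactly this common-extension step: one must assign the finitely many new coordinates to the two copies of $\om$ across all blocks of the $\Delta$-system so that the growth rule is respected, the designated pair becomes $\sub$-comparable in the chosen coordinate, and \emph{every} pair nevertheless receives an $\mathcal{S}$-witness in the complementary coordinate. This is the laborious (but routine) interleaving already met in Example \ref{special-nonHaus}, now carried out in both coordinates at once; it is the only real work.
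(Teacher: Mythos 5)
Your opening reduction is correct, and it is in essence the same observation that drives the paper's proof: the product ideal is Tukey top as soon as the coordinatewise union tower $\mathcal{S}$, $S_\alpha = T^0_\alpha \sqcup T^1_\alpha$, is Hausdorff (the paper phrases this as the product ideal being Tukey equivalent to $\langle \mathcal{S}\rangle$ and then invokes Theorem \ref{Hausdorff=top}). Where you genuinely diverge is in producing the pair of Suslin towers: the paper builds no new forcing at all. It fixes a Hausdorff tower $(T_\alpha)_{\alpha<\om_1}$ in the ground model (these exist in $\mathsf{ZFC}$), adds a \emph{single} Cohen real $c$, and sets $T^0_\alpha = T_\alpha \cap c$ and $T^1_\alpha = T_\alpha \setminus c$. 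Both halves are Suslin by Proposition \ref{cohen-inclusion-tower} (applied to $c$ and to its complement, which is again a Cohen real), while the sum tower $\mathcal{S}$ is just $(T_\alpha)_{\alpha<\om_1}$ itself, whose condition-(H) subtower survives absolutely into the $\om_1$-preserving Cohen extension. Your own first paragraph makes this shortcut immediately available: with $\mathcal{T}^0$, $\mathcal{T}^1$ obtained by splitting a ground-model Hausdorff tower along one Cohen real, your reduction finishes the proof in two lines.

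Instead you propose a bespoke two-coordinate Hechler-style forcing, and there the proposal stops being a proof. The amalgamation step --- building a common extension of two $\Delta$-system conditions that makes the designated pair $\sub$-comparable in one coordinate while supplying fresh (H)-witnesses for \emph{every} newly coexisting pair in the complementary coordinate, simultaneously yielding ccc, Suslinity of both generic towers, and compatibility with the upward flow rule --- is exactly where all the content of such an argument lies, and you defer it as ``routine''. Judging by the paper's Example \ref{special-nonHaus}, whose analogous verification occupies a page of delicate interval bookkeeping, this step is believable (the second coordinate does give the needed slack) but by no means free; as written, your construction is a plan rather than an argument. The fix is not to complete the bookkeeping but to discard the forcing: your reduction plus Proposition \ref{cohen-inclusion-tower} applied to a split Hausdorff tower already yields the proposition.
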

\begin{proof}
Consider a Hausdorff tower $\mathcal T = \left(T_\alpha\right)_{\alpha<\om_1}$. 
Then in a model obtained by adding a Cohen real $c \subseteq \om$ define $T^0_\alpha = T_\alpha \cap c$ and $T^1_\alpha = T_\alpha \setminus c$. 
By Proposition~\ref{cohen-inclusion-tower} both of these towers are Suslin. 
The map $f \colon \langle \mathcal T \rangle_* \to \langle\mathcal{T}^0 \rangle_* \times \langle \mathcal{T}^1\rangle_*$ defined by 
$T_\alpha \cup n \mapsto \left(\left( T_\alpha \cap c \right)\cup n, \left( T_\alpha \setminus c \right) \cup n \right)$
for $\alpha < \om_1, n < \om$ is Tukey, so $\langle\mathcal{T}^0 \rangle \times \langle \mathcal{T}^1\rangle \equiv \langle \mathcal T \rangle$.
\end{proof}

We do not know if the statement of the above proposition is true whenever there is a Suslin $\om_1$-tower.

Notice that by putting together Theorem~\ref{Hausdorff=top} and Corollary~\ref{MA-top}, 
we obtain an alternative proof of Corollary \ref{MA-Hausdorff}.

\begin{cor}
	If $\mathcal{T}$ is a Suslin tower, then $\langle\mathcal{T}\rangle <_T [\om_1]^{<\om}$.
\end{cor}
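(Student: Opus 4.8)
The plan is to establish the two relations $\langle\mathcal{T}\rangle \leq_T [\om_1]^{<\om}$ and $\langle\mathcal{T}\rangle \not\equiv [\om_1]^{<\om}$ separately; together they yield the strict inequality $\langle\mathcal{T}\rangle <_T [\om_1]^{<\om}$, since $<_T$ means $\leq_T$ together with non-equivalence.

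For the upper bound I would first check that $(\langle\mathcal{T}\rangle,\sub)$ is directed: if $A\sub^* T_\alpha$ and $B\sub^* T_\beta$ with $\alpha\leq\beta$, then $A\cup B\sub^* T_\beta$, so $A\cup B\in\langle\mathcal{T}\rangle$ is a common upper bound. Moreover the family $\{T_\alpha\cup n\colon \alpha<\om_1,\ n<\om\}$ is cofinal in $\langle\mathcal{T}\rangle$, because every member of the ideal is $\sub T_\alpha\cup n$ for suitable $\alpha$ and $n$; this is a cofinal subset of size $\om_1$, so by Proposition~\ref{basic-Tukey} the Tukey type of $\langle\mathcal{T}\rangle$ is that of a directed poset of size at most $\om_1$. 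Now I invoke Theorem~\ref{directed}: the only possibilities are $1$, $\om$, $\om_1$, or the fourth case $[\om_1]^{<\om}\geq_T\langle\mathcal{T}\rangle\geq_T\om\times\om_1$. By Proposition~\ref{om-om_1} we have $\om\times\om_1<_T\langle\mathcal{T}\rangle$, and since each of $1$, $\om$, $\om_1$ is $\leq_T\om\times\om_1$, equivalence with any of them would contradict Proposition~\ref{om-om_1}. Hence $\langle\mathcal{T}\rangle$ must fall in the fourth case, and in particular $\langle\mathcal{T}\rangle\leq_T[\om_1]^{<\om}$.

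For the non-equivalence I would use Theorem~\ref{Hausdorff=top}, which asserts that $\langle\mathcal{T}\rangle\equiv[\om_1]^{<\om}$ precisely when $\mathcal{T}$ is Hausdorff. Since $\mathcal{T}$ is Suslin it is by definition not special, and as every Hausdorff tower is special, $\mathcal{T}$ is not Hausdorff. Therefore $\langle\mathcal{T}\rangle\not\equiv[\om_1]^{<\om}$, and combining this with the previous paragraph gives $\langle\mathcal{T}\rangle<_T[\om_1]^{<\om}$. There is no genuinely hard step in this argument; the content is entirely in assembling results already established. The only point requiring care is the upper bound, where one must verify that the ideal is directed and cofinally generated by $\om_1$ sets so that Theorem~\ref{directed} applies, and then exclude the three small Tukey types—which is exactly what Proposition~\ref{om-om_1} provides.
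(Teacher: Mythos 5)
Your proof is correct and is essentially the paper's own argument: the paper derives the corollary as an immediate consequence of Theorem~\ref{Hausdorff=top} (a Suslin tower is not special, hence not Hausdorff, hence $\langle\mathcal{T}\rangle \not\equiv [\om_1]^{<\om}$), with the bound $\langle\mathcal{T}\rangle \leq_T [\om_1]^{<\om}$ supplied by Theorem~\ref{directed} applied, via Proposition~\ref{basic-Tukey}, to the size-$\om_1$ cofinal family $\left\{T_\alpha \cup n \colon \alpha<\om_1,\ n<\om\right\}$, exactly as you assemble it. The paper additionally sketches an alternative direct proof of the non-equivalence using the Erd\"os--Dushnik--Miller theorem, but your route coincides with its primary one.
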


The last fact is of course an immediate consequence of Theorem \ref{Hausdorff=top}, but it can be proved directly using the fact 
that each uncountable subtower of a Suslin tower contains a $\sub$-chain of order type $\om+1$. Indeed, by the Erd\"os-Dushnik-Miller theorem 
($\om_1 \rightarrow (\om_1, \om+1)^2$)  (see \cite[Theorem 11.3]{erdos}) 
we know that either there is an uncountable $\sub$-antichain in
the subtower or a $\sub$-chain of length $\om+1$. The first alternative is clearly not possible. It follows that uncountable well-ordered subsets of the ideals generated by Suslin towers have infinite bounded subsets, so they cannot be Tukey equivalent to $[\om_1]^{<\om}$.

Theorem \ref{Hausdorff=top} gives us one more useful information along these lines. It is not easy to point out the reason why a given tower is not Hausdorff other than the lack of uncountable $\sub$-antichains. Consider the following property of a 
tower $(T_\alpha)_{\alpha<\om_1}$: \emph{for
every uncountable $X\sub \om_1$ there is an infinite $I\sub X$ and $\alpha>\sup I$ such that $\bigcup_{\xi\in I}T_\xi \sub^* T_\alpha$}. By Theorem \ref{Hausdorff=top}, this property is equivalent to saying that $(T_\alpha)_{\alpha<\om_1}$ is not
Hausdorff.


Tukey theory harmonizes with the intuition that the Hausdorff property is in a sense more important than the property of possessing an uncountable 
$\sub$-antichain. It is not clear for us if there are other critical Tukey types of tower ideals.

\begin{remark}
The above approach has a disadvantage. Generating an ideal can loose the information if the generating tower is Suslin. Instead of examining ideals generated by towers, one can investigate the structure $\langle \{T =^* T_\alpha \colon
\alpha < \om_1 \},\sub, \cup,\cap \rangle$ for a given tower $(T_\alpha)_{\alpha<\om_1}$. It is easy to see that being Suslin is invariant under isomorphism of such lattices. 
\end{remark}

\section{Questions}

In this section we list some questions and open problems related to the topic of this paper.

\begin{prob}
Is it consistent that each Hausdorff tower is the left half of a Hausdorff gap?
\end{prob}

Notice that the standard Hausdorff construction (of a Hausdorff gap) cannot be modified in an obvious way to produce a Hausdorff tower without creating the other half of a Hausdorff gap as a byproduct. In Section \ref{basic} we showed a consistent
example of a Hausdorff tower which is maximal (see Remark \ref{maximal-Hausdorff}) and hence is not a half of any gap.  

\begin{prob}
Is it consistent that all $\om_1$-towers/gaps are special but there is a non-Hausdorff tower/gap?
\end{prob}

In particular, we can ask the following:

\begin{prob}
Does $\mathsf{OCA}$ implies that every $\om_1$-tower/gap is Hausdorff?
\end{prob}

The natural attempt to answer this question in negative would be to start with a model with a special non-Hausdorff tower/gap and show 
that forcing $\mathsf{OCA}$ preserves its non-Hausdorffness. 

Every $\sub^*$-descending tower generates a filter in $\pofin$, a closed subset of the space of ultrafilters $\om^*$. It is natural
to ask if the closed sets generated by Hausdorff towers possess some special properties.

\begin{prob}
Is there some characterization of the Hausdorff property of towers in topological terms? 
\end{prob}

Perhaps the next question can be solved using coherent sequences. They produce towers in a nice way, but it is not 
clear how to analyze the properties of resulting towers.

\begin{prob}
Does $\mathfrak{t}=\om_{1}$ imply that there is a maximal Hausdorff tower?
\end{prob}

Since each Hausdorff tower generates a meager ideal, a positive answer would provide a dense meager $\om_1$-generated P-ideal.
Example 1 in \cite{oscillations} shows that existence of these objects is in fact equivalent. 

\begin{prob}\label{tall&meager}
Is there a model in which every ideal generated by an ($\om_1$-)tower is dense only if it is non-meager?
\end{prob}

Note that this problem for $\om_1$-towers is interesting only if we add the requirement $\mathfrak t = \om_1$.
The conjecture here is that there is no such model, i.e.\ a meager dense $\om_1$-generated P-ideal should be constructible from the assumption $\mathfrak t = \om_1$.
Obviously, if $\om_1 < \mathfrak b$, every $\om_1$-generated ideal is meager.
If $\non(\mathcal{\mathcal M}) = \om_1$ ($\mathcal M$ is the~ideal of meager subsets of $2^\om$), then there is such an ideal by the following argument due to M.\,Hru\v{s}\'ak.

For a tall ideal $\mathcal I \subseteq \Pw{\omega}$ define
\[\cov^*(\mathcal I) = \min\{|\mathcal A |\colon \mathcal A \subset \mathcal I, (\forall X \in [\om]^\om )(\exists A \in\mathcal A)(|A \cap X| = \om ) \}.\]
It follows from \cite[Propositions 1.5, 3.1, 3.2]{Hrusak-Hernandez} that 
$\cov^*(\mathcal I ) \leq \non(\mathcal M)$ for each tall analytic P-ideal $\mathcal I$.
Thus if $\non(\mathcal{\mathcal M}) = \om_1$, for any given tall analytic P-ideal $\mathcal I$ there is a tall $\om_1$-tower which generates an ideal contained in $\mathcal I$, hence is meager.


A gap $\left(f_\alpha, g_\alpha \right)_{\alpha < \om_1}$ in $(\fc,<^*)$ is \emph{tight} if
$\left(f_\alpha\restriction A, g_\alpha\restriction A \right)_{\alpha < \om_1}$
is a gap in $(\lxp{\om}{A},<^*)$ for each infinite $A \sub \om$.
A positive answer to the following problem would provide a~negative answer to Problem~\ref{tall&meager}.

\begin{prob}
	Is the assumption $\mathfrak t = \om_1$ equivalent with the existence of a tight gap in~$(\fc,<^*)$.
\end{prob}

In connection with the previous problem, let us mention that the Borel weak diamond principle $\diamondsuit(2,=)$ of \cite{parametr_diamonds} implies the existence of
a tight gap (this was suggested by M.\,Hru\v{s}\'ak). In fact, it even implies the existence of a peculiar gap (see~\cite{shelah885} for definition). 
Note also that there are no peculiar gaps in the model from~\cite{Dow-Steprans}, but $\mathfrak b = \om_1$.

We know that there can be Suslin towers generating meager ideals. However, it is unclear whether they are not equivalent to special towers.

\begin{prob}
Is there a tower generating a tall meager ideal which is not equivalent to a special tower? 
\end{prob}

In Section \ref{Tukey} we mentioned that in each Suslin tower there is a $\sub$-chain of order type $\om+1$. It seems natural to ask the following:

\begin{prob}
How long $\sub$-chains have to exist in Suslin towers?
\end{prob}

\section{Acknowledgments}

Most of the results presented in this article were obtained during the stay of the authors at Fields Institute in Toronto.  We would hereby like to thank the institute for the excellent working conditions. The first author wants to acknowledge the
support of the INFTY Research Networking Program (European Science Foundation) via grant~4955. We are grateful to several mathematicians, who agreed to devote their time to discuss the subject of this paper, including Grzegorz Plebanek, Michael Hru\v{s}\'ak, 
J\"{o}rg Brendle, and Teruyuki Yorioka. We are particularly indebted to Stevo Todor\v{c}evi\'c for all his invaluable remarks and insightful suggestions.
We would also like to thank the anonymous referee for the careful reading of the article and for dozens of valuable comments.


\bibliographystyle{alpha}
\bibliography{hausdorff}
\end{document}